\newtheorem{theorem}{Theorem}
\newtheorem{lem}[theorem]{Lemma}
\newtheorem{prop}[theorem]{Proposition}
\newtheorem{definition}[theorem]{Definition}
\newtheorem{examp}{Example}
\def\R{\mathbb{R}}
\def\V{\mathsf{V}}
\def\E{\mathsf{E}}
\def\F{\mathsf{F}}
\title{Liftings and stresses for planar periodic frameworks}
\author{Ciprian S. Borcea and Ileana Streinu}
\date{}
\begin{document}
\maketitle

\begin{abstract}
We formulate and prove a periodic analog of Maxwell's theorem relating stressed planar frameworks and their liftings to polyhedral surfaces with spherical topology. We use our lifting theorem to prove deformation and rigidity-theoretic properties for planar periodic pseudo-triangulations, generalizing features known for their finite counterparts. These properties are then applied to questions originating in mathematical crystallography and materials science, concerning planar periodic auxetic structures and ultrarigid periodic frameworks.

\medskip
\noindent
{\bf Keywords:} {periodic framework \and Maxwell's theorem \and periodic stress \and liftings \and 
periodic pseudo-triangulation \and expansive motion \and auxetics \and ultrarigidity}

\end{abstract}

\section{Introduction}
\label{sec:intro}

A remarkable correspondence between planar stressed graphs, their duals and polyhedral surfaces with a spherical topology has been established in 1870 by James Clerk Maxwell: 

\begin{wrapfigure}{l}{0.34\textwidth}
\vspace{-12pt}
\centering
{\includegraphics[width=0.34\textwidth]{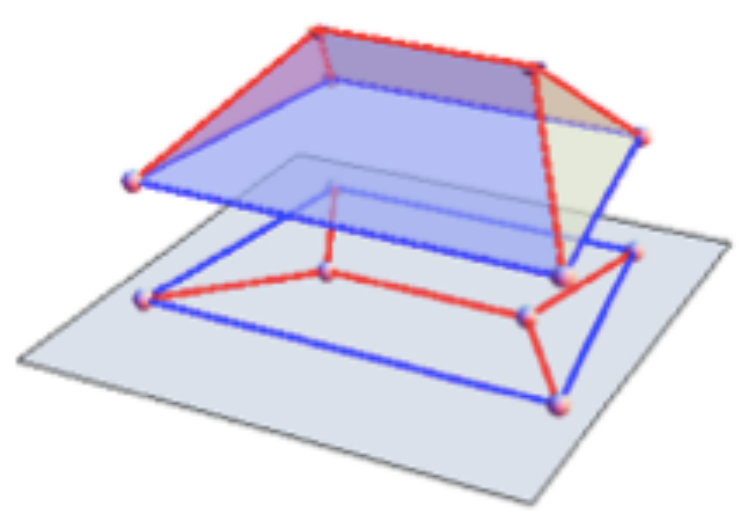}}
\vspace{-14pt}
\caption{A finite planar stressed graph and a Maxwell lifting.}
\vspace{-20pt}
 \label{fig:liftFiniteGraph}
\end{wrapfigure}

\medskip
\noindent
{\bf Maxwell's Theorem \cite{M2} }
{\em A planar geometric graph $(G,p)$ supports a non-trivial stress on its edges if and only if it has a dual reciprocal diagram and, at the same time, if and only if it has a non-trivial lifting to 3D as a polyhedral terrain.}

\medskip
\noindent
The necessary definitions are recalled below in Section \ref{sec:preliminaries}. A closely related instance of this theorem is the classical duality between Voronoi diagrams and Delaunay tesselations, where the 3D lifting is onto a paraboloid.  Maxwell's diagrams, further popularized in Cremona's book \cite{Cremona}, were widely used for engineering calculations throughout the 19th and 20th centuries. The theorem has many other applications, for example in problems of robustness for geometric algorithms, rigidity theory, polyhedral combinatorics and computational geometry
\cite{CW,hopcroft:kahn:paradigmRobustAlg:1992,RG,rote:ribo:schulz:smallGridEmbeddings:2011,CDR,S2}. Most relevant to our undertaking is its role in establishing the existence of planar expansive motions used in the solution to the Carpenter's Rule problem \cite{CDR}, and in proving the expansive properties of pointed pseudo-triangulation mechanisms that are central to the algorithm for convexifying simple planar polygons of \cite{S1,S2}.

\paragraph{\bf Our results.}
In this paper we prove the following {\em periodic} analog of Maxwell's theorem. Figure~\ref{fig:stressedPeriodicFmk} illustrates the concepts, whose precise definitions will be given in Section \ref{sec:periodicLiftingStress}.

\medskip
\noindent{\bf Main Theorem }
{\em 
Let $(G,\Gamma,p,\pi)$ be a planar non-crossing periodic framework. A stress induced by a periodic lifting is a periodic stress and conversely, any periodic stress is induced by a periodic lifting, determined up to an arbitrary additive constant.
}

\begin{figure}[t]
\centering
\subfigure[]{\includegraphics[width=0.23\textwidth]{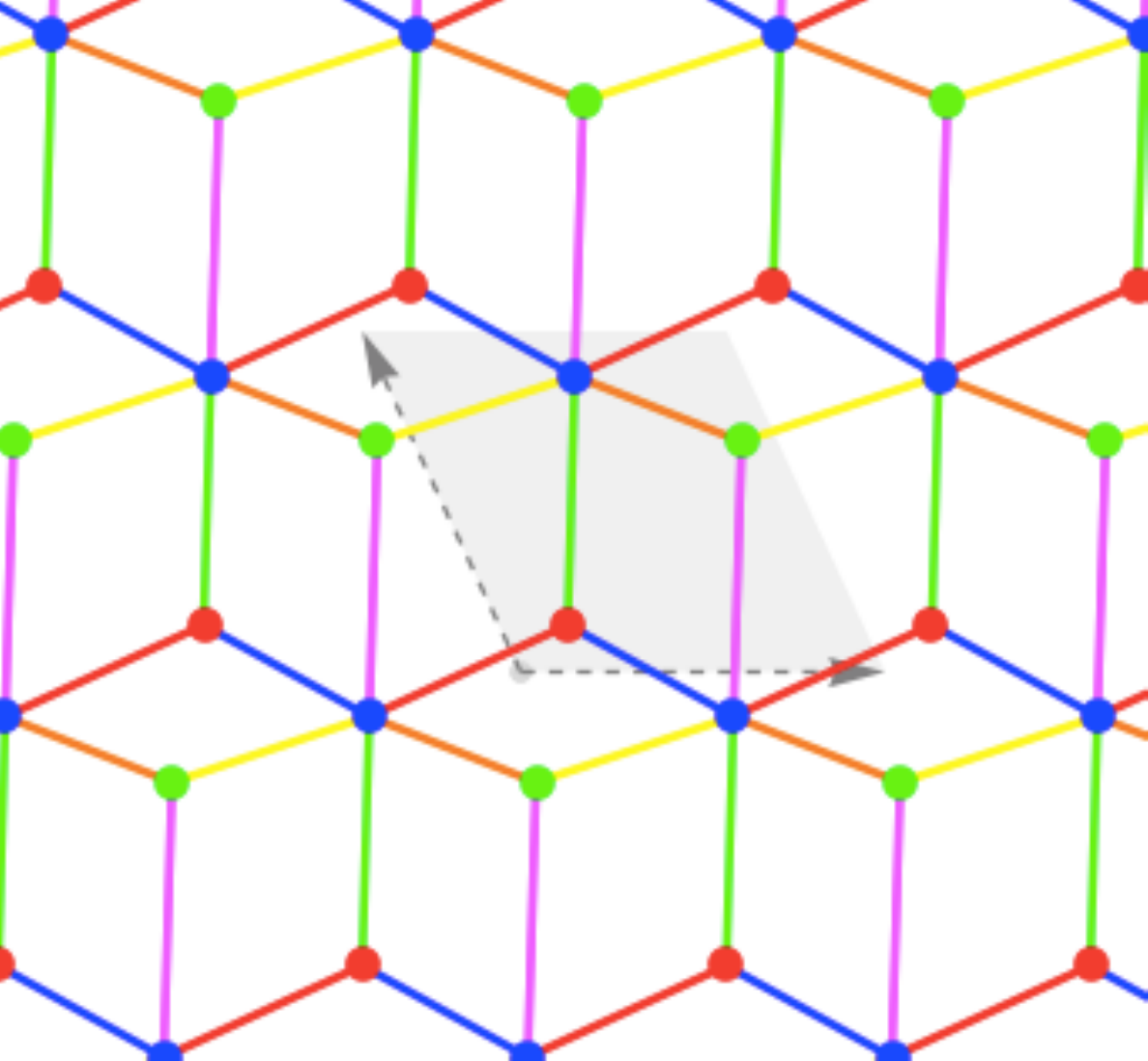}}
\hspace{3pt}
\subfigure[]{\includegraphics[width=0.23\textwidth]{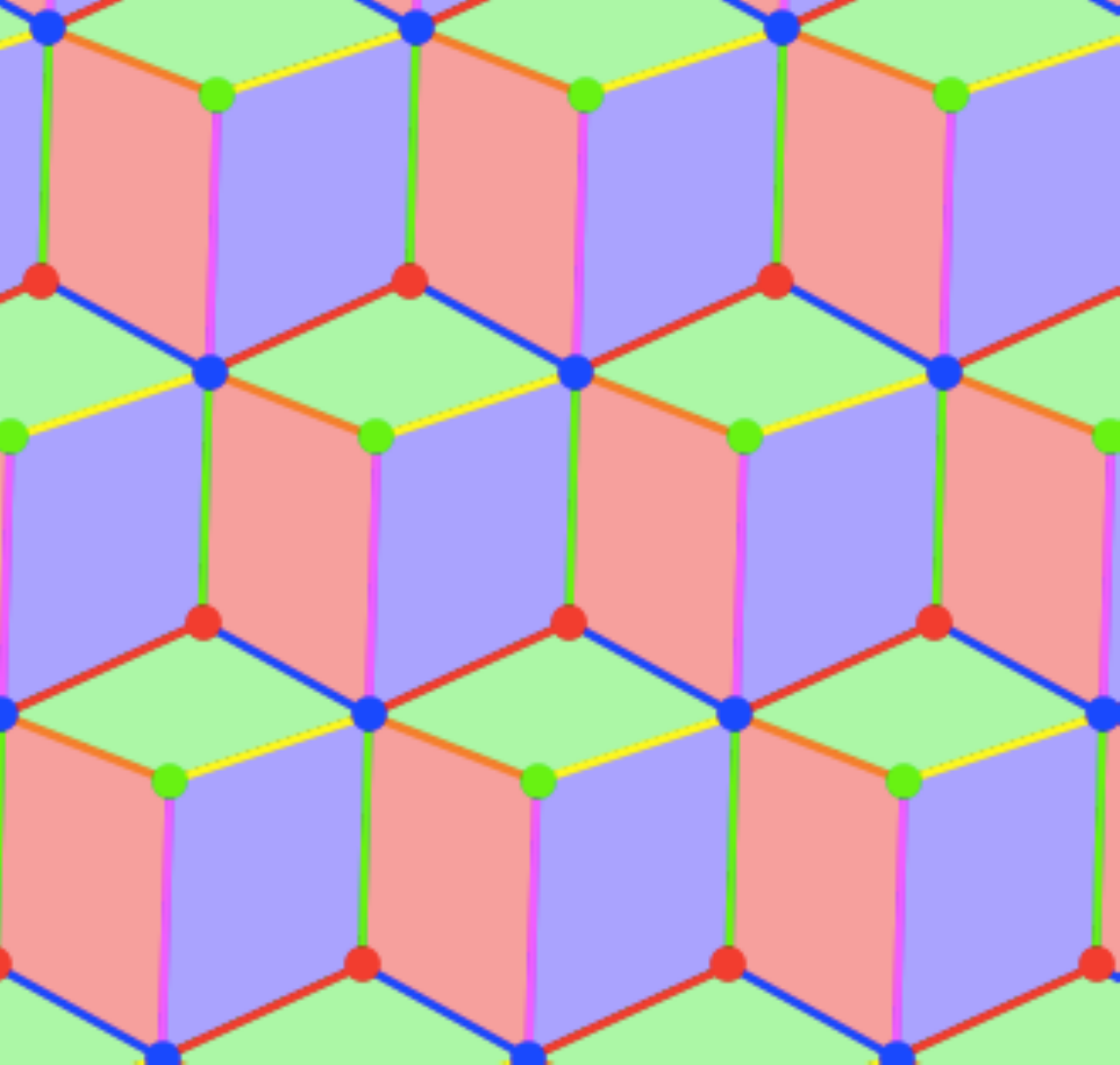}}
\hspace{20pt}
\subfigure[]{\includegraphics[width=0.22\textwidth]{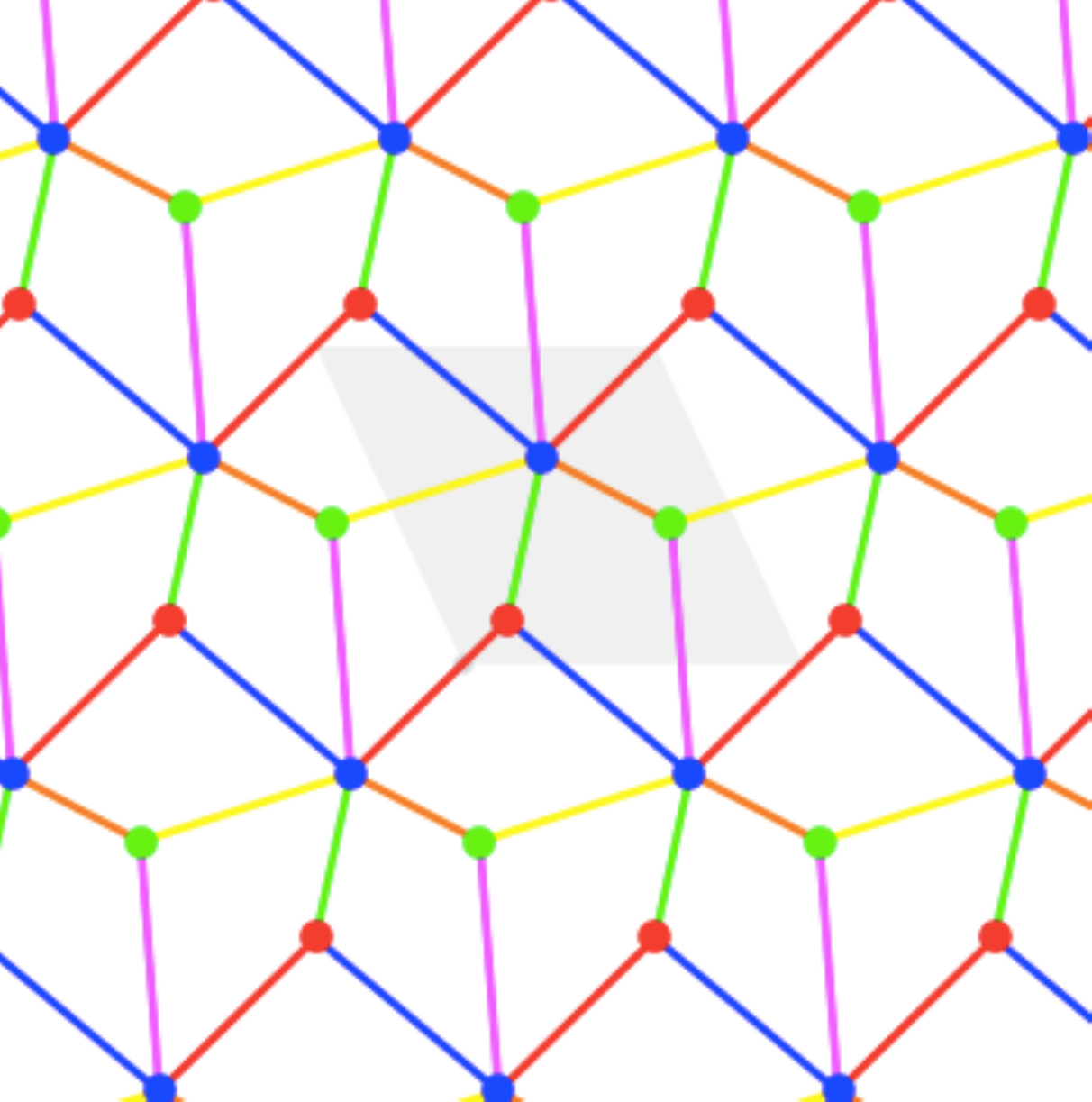}}
\hspace{3pt}
\subfigure[]{\includegraphics[width=0.22\textwidth]{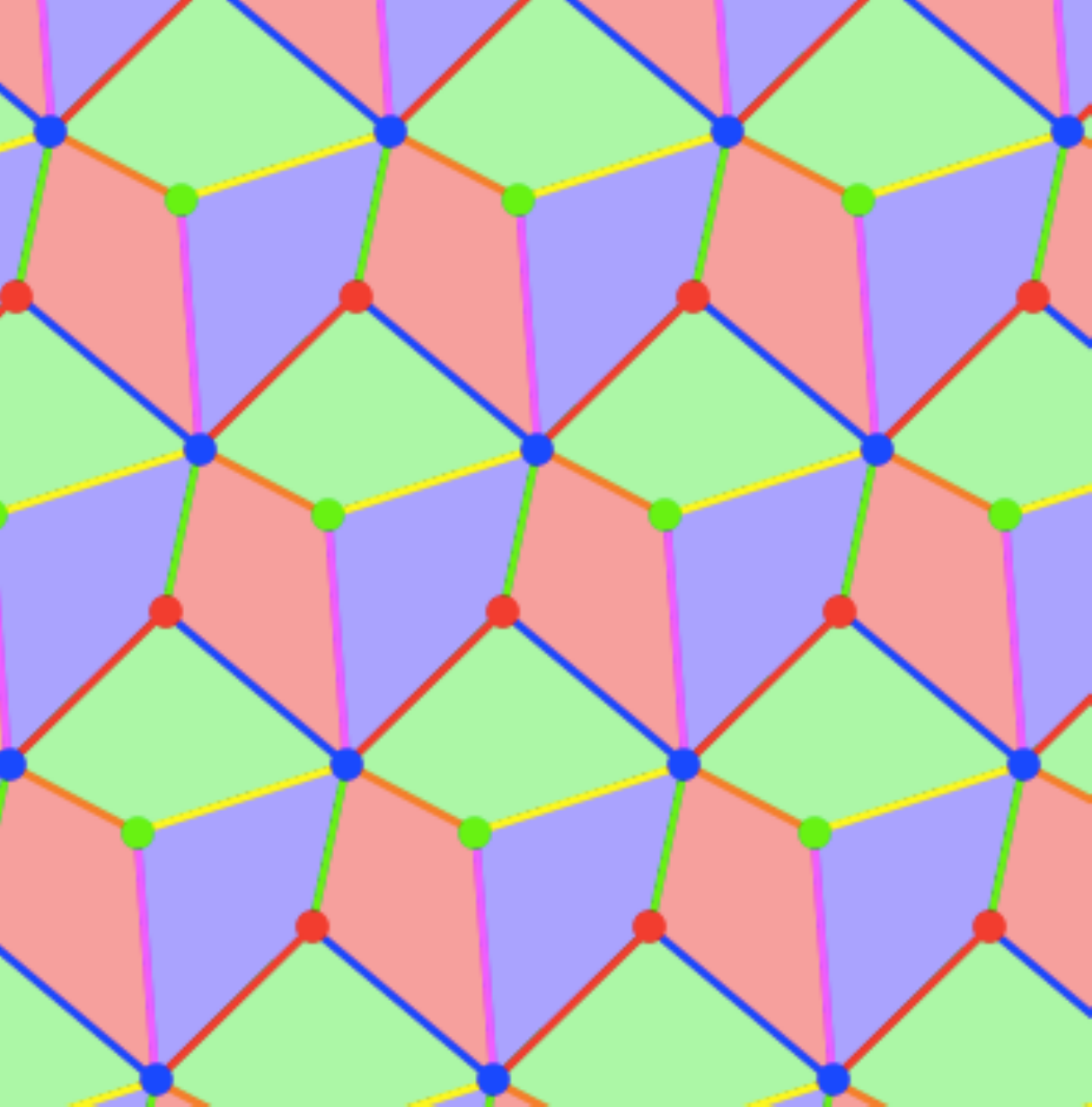}}
\vspace{-6pt}
\caption{The connection between periodic stresses and liftings proven in this paper. (a) A stressed periodic framework. Vertex and edge orbits are identically colored, and a fundamental polygon of the periodicity lattice is highlighted in grey. (b) Coloring the faces helps visualize the 3D lifting of the framework as a periodic arrangement of ``cubes''.
(c) The same periodic graph, in a placement with no non-trivial  periodic stresses. (d) The face coloring visually confirms that this is {\em not} the projection of a polyhedral surface: the faces do not look ``flat'' in 3D.}
\vspace{-10pt}
 \label{fig:stressedPeriodicFmk}
\end{figure}

\medskip
\noindent
Non-crossing periodic graphs can be seen as graphs embedded on the flat torus. However, as it will become clear from this paper, to reason on a fixed torus would be too restrictive a perspective. The most important foundational element that makes  the new result possible is our recent {\em deformation theory of periodic frameworks} \cite{BS2}, which allows the periodicity lattice to deform. The corresponding notion of {\em periodic stress} is precisely the notion of stress that is needed for the Main Theorem. This stress is more constrained than the direct generalization of the classical {\em self-stress} used for finite frameworks,  which is based solely on equilibrium at all vertices. In order to maintain the proper distinction, we refer to the latter type of stress as an {\em equilibrium stress}. 

\medskip
\noindent
Our theorem was motivated by questions arising in mathematical crystallography and computational materials science. We demonstrate its usefulness with two applications: 
constructions of ultrarigid frameworks and auxetic mechanisms.

\begin{wrapfigure}{r}{0.5\textwidth}
\vspace{-20pt}
\centering
{\includegraphics[width=0.24\textwidth]{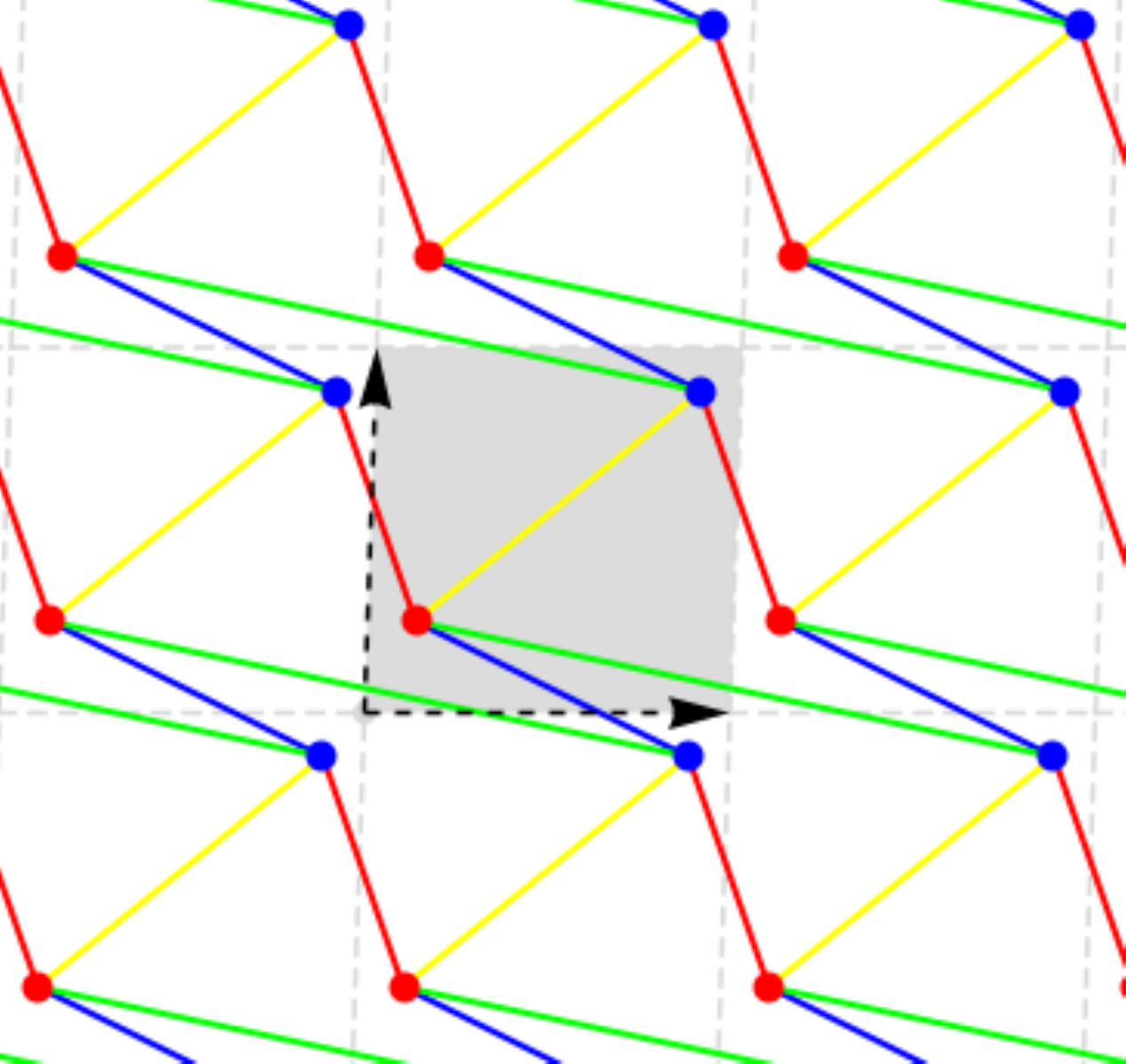}}
\hspace{4pt}
{\includegraphics[width=0.24\textwidth]{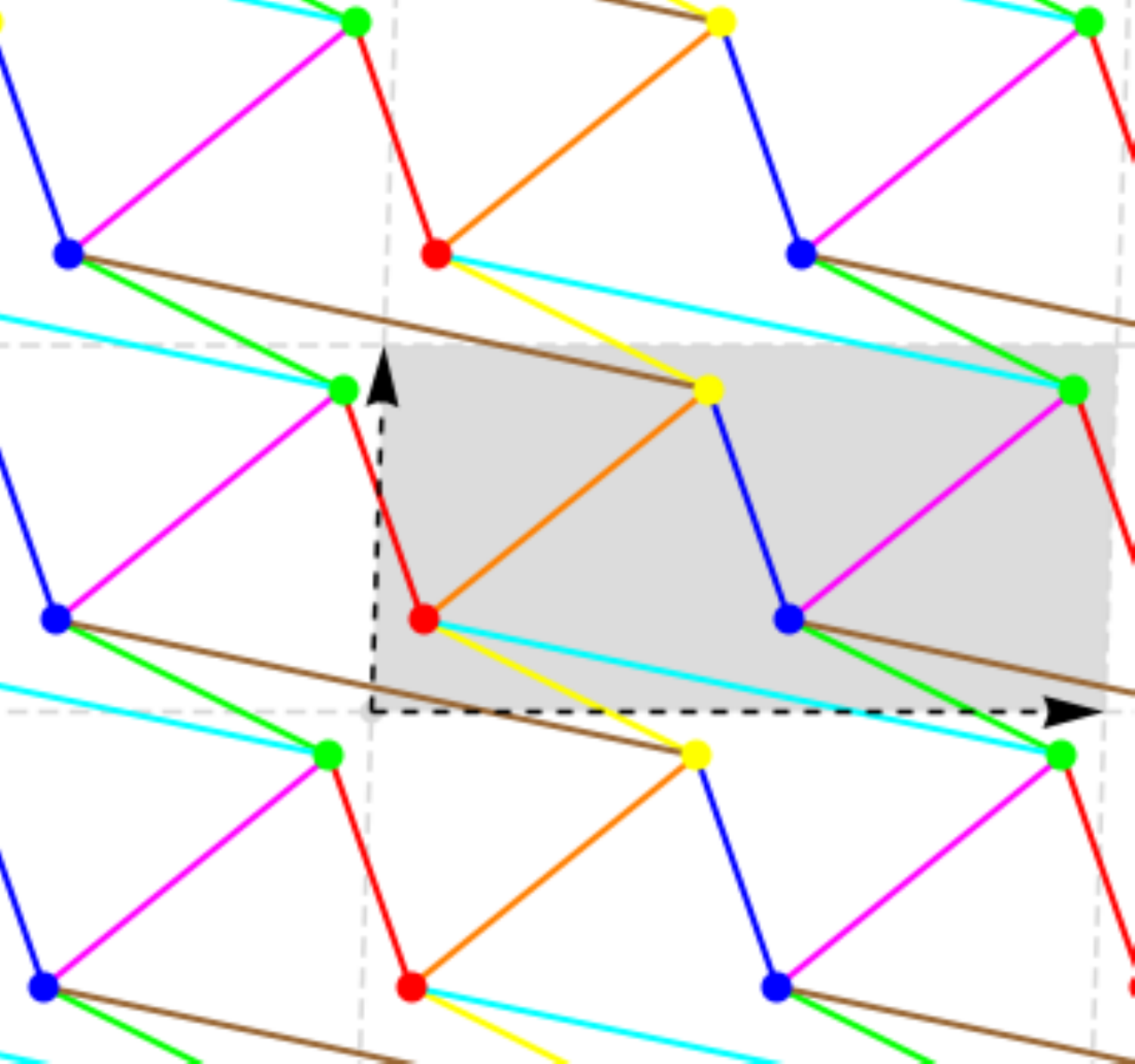}}
\caption{A periodic framework and a $2\times1$  relaxation of its periodicity lattice. 
}
\vspace{-20pt}
 \label{fig:relaxLattice}
\end{wrapfigure}

\paragraph{\bf Ultrarigidity of periodic frameworks.}  Leaving some technical details aside, our proof of the correspondence between periodic liftings and periodic stresses will proceed by showing how to obtain a transparent, algebraic matching of all the concepts involved after a sufficient {\em relaxation of periodicity}. Relaxations of periodicity are a central concern in displacive phase transitions \cite{D,BS4} and successive relaxations give rise to difficult and important problems for estimating the {\em asymptotic behavior} of a periodic framework.

\medskip
\noindent
By definition, a periodic framework is  {\em ultrarigid}  if it is and remains infinitesimally periodically rigid under arbitrary relaxations of periodicity to subgroups of finite index (see Figure~\ref{fig:relaxLattice}).  This concept was introduced in \cite{borcea:pharmacosiderite:Kavli:arxiv:2012}, and illustrated with a few examples of crystalline materials exhibiting this property. Ultrarigidity provides a rigorous tool for studying the asymptotic rigidity of a periodic framework when successive relaxations of periodicity are applied. The proof techniques developed in this paper will allow us to construct infinite families of ultrarigid examples. 

\begin{wrapfigure}{l}{0.25\textwidth}
\centering
{\includegraphics[width=0.25\textwidth]{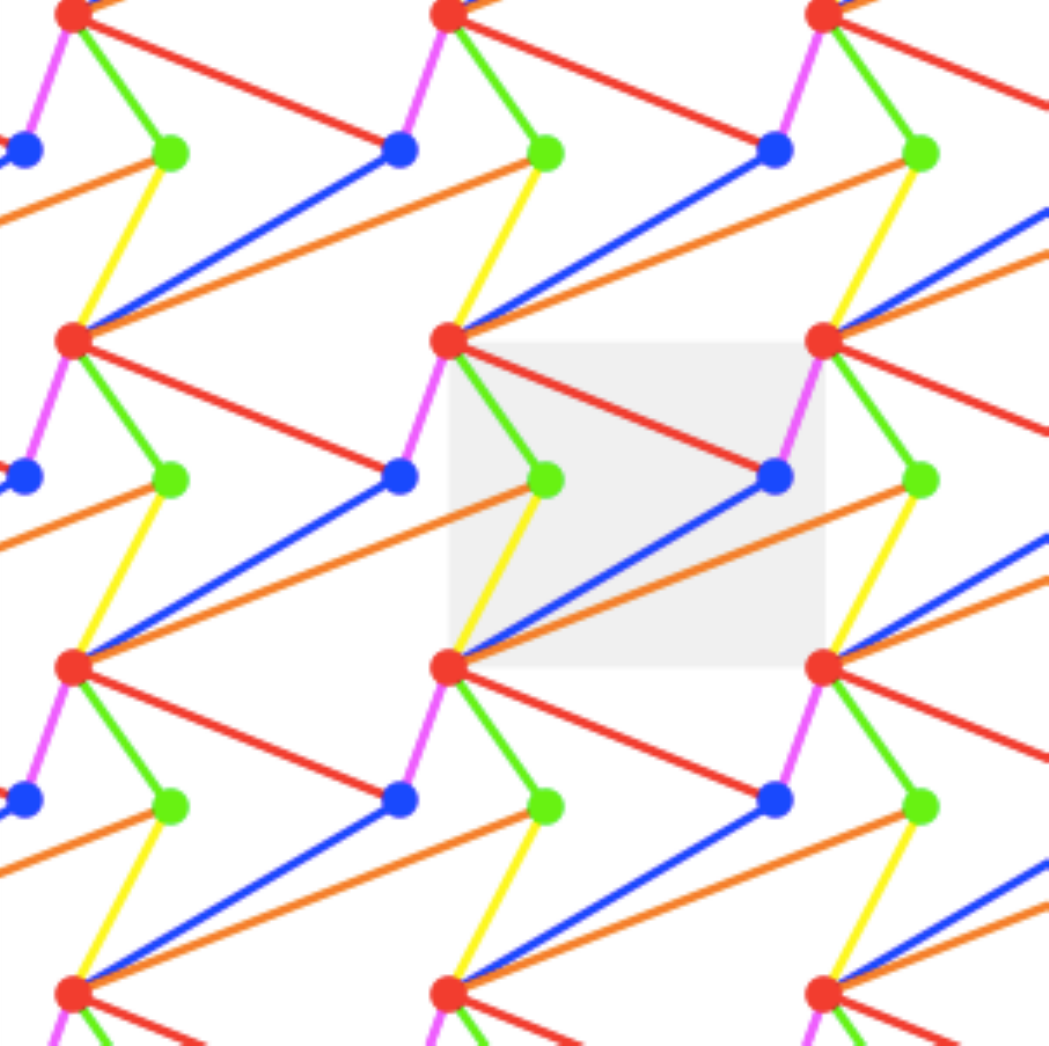}}
\vspace{-16pt}
\caption{A periodic pointed pseudo-triangulation.}
\vspace{-26pt}
\label{fig:periodicPPT}
\end{wrapfigure}

For some related considerations and observations on rigidity and relaxation, we mention \cite{Power,connellyEtAl:BallPackings:2014}. The basic theory of periodic frameworks from the point of view of rigidity and flexibility can be found in \cite{BS2,BS3,BS4}. For wider or complementary aspects of periodic framework theory we suggest \cite{Su,BST} and references therein.

\paragraph{\bf Periodic pseudo-triangulations.} We use the Main Theorem to study a new class of planar non-crossing periodic frameworks called {\em periodic pointed pseudo-triangulations} or shortly {\em periodic pseudo-triangulations}. They represent a natural analog of the finite {\em pointed} pseudo-triangulation frameworks defined and studied in  \cite{S1,S2} and possess {\em mutatis mutandis} many outstanding characteristics related to rigidity and deformations \cite{S1,S2,RSS1,RSS2}. Here we focus on the {\em expansive} one-degree-of-freedom mechanisms they provide and on the property of turning into ultrarigid periodic frameworks after  the insertion of a single (adequate) edge-orbit.

\paragraph{\bf Deformations of periodic frameworks: auxetic and expansive behavior.} 
The significance of the idea of expansive motion is well recognized in the finite setting 
\cite{CDR,S2,RSS2}: when the distance between any pair of vertices cannot decrease, self-collision of the framework is avoided. In the periodic setting, expansive mechanisms have not been explicitly considered before, although a related, yet weaker notion of {\em auxetic behavior} has recently attracted a lot of attention in materials science \cite{ENHR,LWMSE,MRMST}. Since the existing literature on auxetics is based on elasticity theory, we include a brief and necessarily selective overview of the relations existing between the purely geometric theory pursued in this paper and the larger context of  periodic structures explored in crystallography, solid state physics and materials science \cite{LdF,Su,G}.

\begin{wrapfigure}{l}{0.4\textwidth}
\centering
{\includegraphics[width=0.38\textwidth]{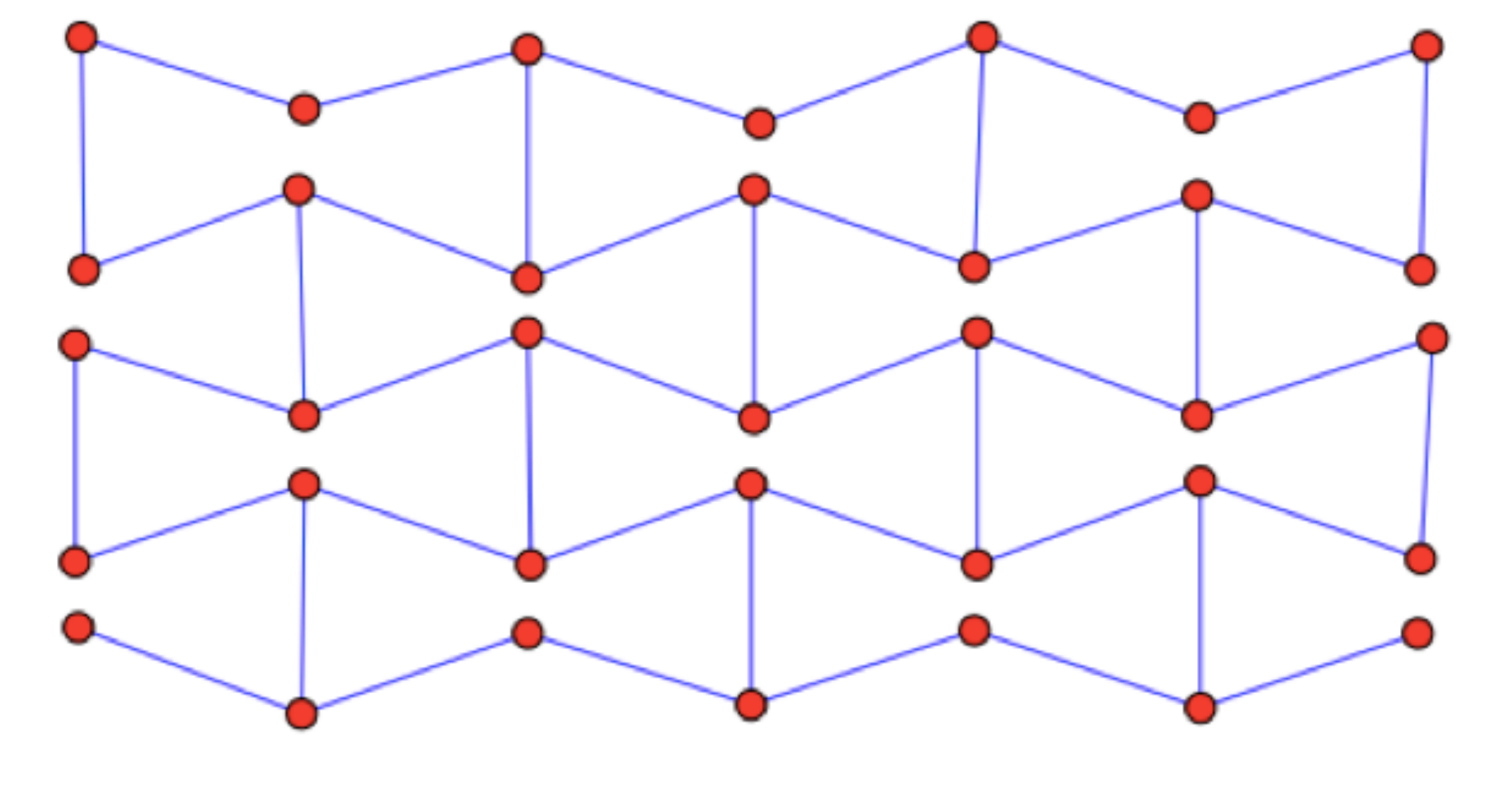}}
\vspace{-4pt}
\caption{This so-called `reentrant' structure of hexagons is often used to
illustrate auxetic behavior.}
\vspace{-10pt}
\label{fig:auxetic2D}
\end{wrapfigure}

\medskip \noindent
Interest in crystal morphology and structure motivated mathematical studies of symmetry, lattice sphere packings and crystallographic groups \cite{CS,LdF}. With the advent of X-ray diffraction, materials science gained access to atomic-scale configurations and bonding networks. An explicit mention of a {\em periodic framework deformation} appears in Pauling's 1930 paper \cite{P}. Nevertheless, such geometric investigations addressed only 
specific crystalline materials and remained mostly concerned with a number of instances of 
one-parameter deformations related to particular displacive phase transitions, as in \cite{Dol,D}. 

\medskip \noindent
The notion of {\em auxetic behavior} 
is formulated using the concept of negative Poisson's ratio \cite{G,GGLR,ENHR}, which relies on physical properties of the material: when two forces pull in opposite directions along an axis, most materials are expected to expand along this axis and to contract along directions perpendicular to it.  Auxetic behavior refers to the rather counter-intuitive lateral widening upon application of a longitudinal tensile strain. A purely geometric expression of this behavior is not anticipated in all situations. However, for periodic frameworks, we have recently proposed the general geometric notion of {\em auxetic path} in the deformation space of the periodic framework \cite{BS5}. Relying on this formulation, we prove that an expansive deformation path is necessarily an auxetic path. Periodic pseudo-triangulations thus exhibit auxetic behavior and offer an infinite supply of planar examples of ``auxetic frameworks''. By contrast, only a limited collection of sporadic and artisanal auxetic periodic examples (Figure~\ref{fig:auxetic2D}) has appeared in the literature. One may already notice that the reentrant framework from Figure~\ref{fig:auxetic2D} looks ``almost'' like a pointed pseudo-triangulation, except that it is not maximal. It needs one more edge-orbit for all its faces to become pseudo-triangles, as illustrated in Figure~\ref{fig:pptReentrant} of Section \ref{sec:auxetic}.

\paragraph{\bf Organization of the paper.} In Section \ref{sec:preliminaries} we define the basic concepts needed to study the correspondence between liftings and stresses in (finite or infinite) frameworks. Section \ref{sec:periodicLiftingStress} specializes these concepts to {\em periodic} liftings and stresses. Section \ref{sec:deformationPeriodicStress} concludes  the proof of the Main Theorem by providing the necessary link with periodic rigidity and flexibility. The Main theorem is applied in Section \ref{sec:periodicPPTs} to prove the expansive properties of periodic pseudo-triangulations. The connection with auxetic behavior and the ultrarigid character of periodic frameworks obtained from pseudo-triangulations are presented in the final Sections \ref{sec:auxetic} and \ref{sec:ultrarigidity}.

\section{Liftings and stresses}
\label{sec:preliminaries}

To formulate and prove our Main Theorem, we start with those concepts and properties that do not depend (yet) on periodicity, which is introduced in the next section.

\paragraph{\bf Graphs and frameworks.} We consider finite or countably infinite graphs which are simple (i.e. without loops or multiple edges), unoriented and of finite degree (or valency) at each vertex. Such a graph is given as a pair $G=(\V,\E)$, with $\V$ the set of vertices and $\E$ the set of (unoriented) edges. We use lower case symbols $u, v, \cdots $ for vertices in $\V$. An edge $e=\{u,v\}=\{v,u\}\in \E$ has two {\em endpoints} $u, v\in \V,u\neq v$ and can be given two orientations $(u,v)$ and $(v,u)$. Two edges $e_1, e_2\in \E$ are {\em adjacent} if they have a common endpoint: $|e_1\cap e_2|=1$. The set of edges {\em incident} to a vertex $v$ consists of all edges $e\in \E$ having $v$ as one of their endpoints. The degree (or valency) of a vertex is the number of edges incident to it.

\medskip
\noindent
A {\em placement}  of $G$ in $\R^d$ is given by a mapping $p: \V\mapsto \R^d$ of the vertices to points in $\R^d$, such that the two endpoints of each edge $e=\{u,v\}\in \E$ are mapped to distinct points in $\R^d$: $p(u)\neq p(v)$.  An edge $\{u,v\}$ is seen geometrically as an {\em edge-segment} $[p(u),p(v)]$, and an oriented edge $(u,v)$ determines an {\em edge-vector} $p(v)-p(u)\in \R^d$. We work under the assumption that {\bf all placements are locally finite maps}, that is, the preimage of any bounded set is finite. This is certainly true for the periodic placements defined in the next section.

\medskip
\noindent
A {\em framework} or {\em geometric graph} $(G,p)$ is a graph $G$ together with a placement $p$, restricted in this paper to $\R^2$ or $\R^3$. We use the term {\em planar placement} for $\R^2$, when the distinction is necessary.  

\medskip
\noindent
\paragraph{\bf Planar non-crossing frameworks.} A planar placement is {\em non-crossing} if any pair of edges induces disjoint closed segments, with the possible exception of the common endpoint, in the case when the edges are adjacent. A graph $G$ is {\em planar}\footnote{Note that we use {\em planar} for the graph, as is customary in graph theory, and {\em non-crossing} for the framework. Our use of {\em planar framework} is customary in rigidity theory, and refers to a placement in the {\em plane.} } 
if it admits a non-crossing placement $(G,p)$. We  consider only connected graphs, therefore a non-crossing placement $(G,p)$ induces a connected subset of the plane, made of the points $p(v), v\in \V$ and the edge segments $[p(u),p(v)]$ with $\{u,v\}\in \E$. A {\em face} $U$ is a connected component of the complement of the placement $(G,p)$. {\bf We assume throughout the paper that  the boundary of each face is a simple finite polygon}. A face is described combinatorially by the cyclic collection of its boundary vertices or edges, and each edge is on the boundary of exactly two distinct faces. 

\medskip
\noindent
For a planar non-crossing framework it is convenient to use now the same symbol $G$ to denote the entire collection $G=(\V,\E,\F)$ of vertices $\V$, edges $\E$ {\em and} faces $\F$.  For notational simplicity, we allow the capital letters $U, V, ...$ to stand for open face domains, closed faces, or the corresponding boundary cycles (as needed in various contexts).  

\medskip
\noindent
When referring to a {\em planar graph} $G$ (with no reference to a particular placement) we assume that the choice of face cycles $\F$ is also given, i.e. $G=(\V,\E,\F)$.  
{\em We note that even when the underlying graph $G=(V,E,F)$ of a framework $(G,p)$ is a planar graph, the particular placement $p$ of the framework  may have crossings: we still refer to the {\em realization of a face}, although it may be a self-intersecting polygon. }

\medskip
\noindent
To a planar graph $G=(\V,\E,\F)$ we can associate a {\em dual} structure $G^*=(\V^*,\E^*, \F^*)$  defined as the abstract triple whose vertices $\V^*$ correspond to the faces $\F$ of $G$ i.e. $\V^* = \F$, and whose edges $\E^*$ are in one-to-one correspondence with the edges $\E$ of $G$, as follows: if two faces  $U$ and $W$  share an edge $e$, then the dual vertices $U^*$ and $W^*$ are connected by the dual edge $e^*$.  The dual faces $\F^*$ correspond to the vertices of $G$ i.e. $\F^*=\V$, with the cycle of faces around a vertex inducing its corresponding dual face.
By abuse of language, we may refer to $G^*=(\V^*,\E^*, \F^*)$ as the {\em dual graph} of $G=(\V,\E,\F)$, although it may have multiple edges.

\medskip
\noindent

\begin{wrapfigure}{l}{0.4\textwidth}
\vspace{-20pt}
\centering
 {\includegraphics[width=0.38\textwidth]{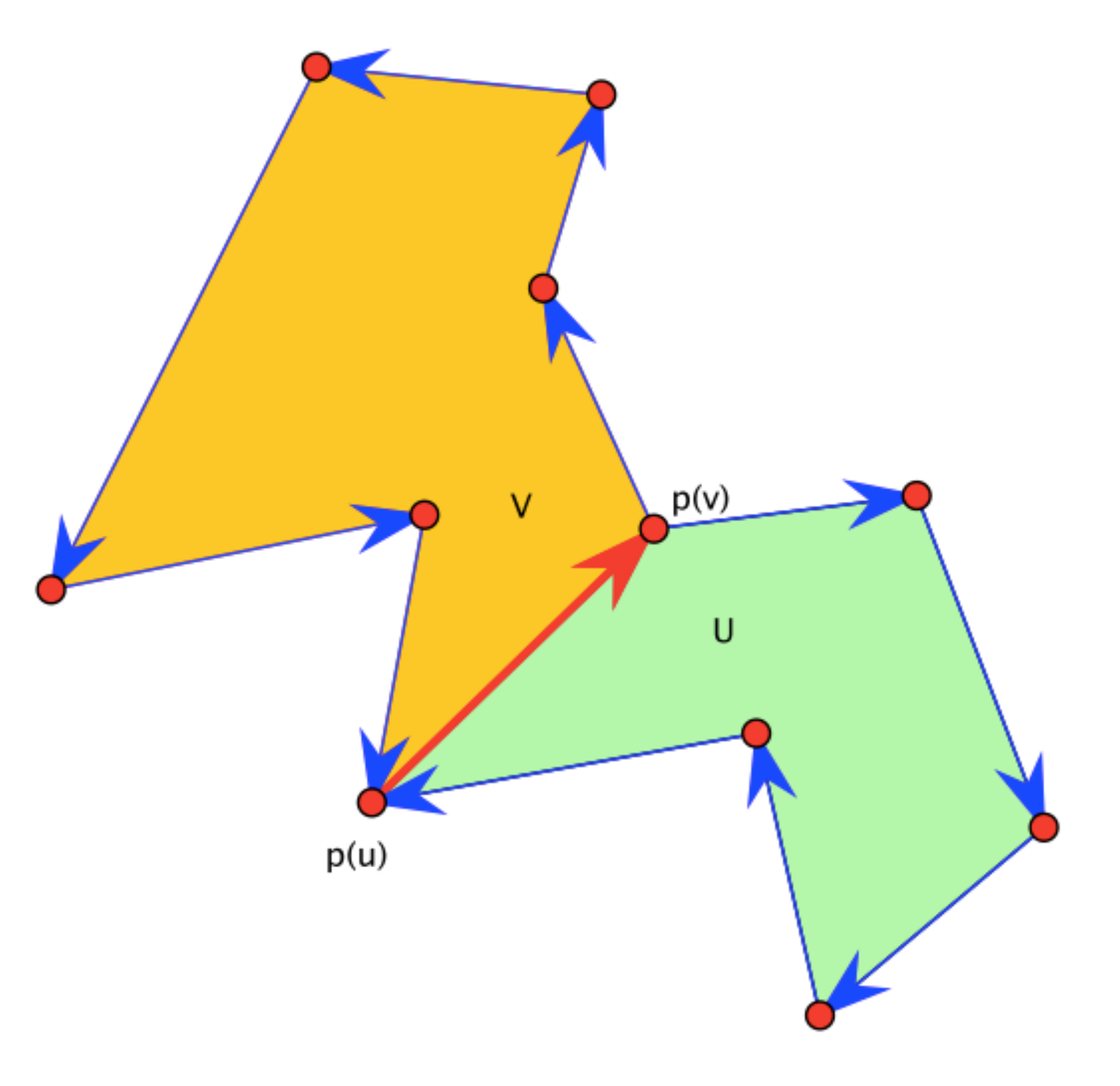}}
\vspace{-4pt}
 \caption{ The orientation convention for face edges and vertex edges.}
\vspace{-14pt}
\label{FigTetrad}
\end{wrapfigure}

\paragraph{\bf Orientation rule.}\  In a planar non-crossing framework, an edge $\{u,v\}$ induces a segment $[p(u),p(v)]$, and it belongs to the boundary of {\em exactly two faces}, say $U$ and $V$. In the dual graph $G^*=(\F,\E)$ these two faces $U$ and $V$ represent two vertices connected by the unoriented edge $\{ U,V\}$  {\em dual}  to $\{ u,v \}$. Later on, we will need to match an {\em oriented} edge in the primal graph $G=(\V,\E)$ with an orientation of its dual edge in $G^*=(\F,\E)$. We use the following convention. The oriented  edge segment $[p(u),p(v)]$ gives opposite senses for going
around face $U$ and face $V$, say counter-clockwise around  $V$ and clockwise around $U$.
Then the matching orientation of the edge in the dual graph $G^*$ is from $U$ to $V$. In short:
`from clockwise to counter-clockwise', as illustrated in Figure~\ref{FigTetrad}.

\medskip \noindent
The matching orientation described above for elements in the edge set $\E$, when considered as oriented edges in $G=(\V,\E)$, respectively $G^*=(\F,\E)$, gives a well-formed double pair $((u,v),(U,V))$, or simply a {\em tetrad} $(u,v,U,V)$. In computational geometry, these tetrads are implicit in the quad-edge data structure used for representing general surfaces \cite{Guibas}. Reversing orientation on the edge gives the tetrad $(v,u,V,U)$. Below, we will refer to cycles (called {\em face-cycles}) of oriented edges in the dual graph $G^*=(F,E)$: the orientation rule described above gives an unambiguous correspondence with {\em oriented edges} $(u,v)$ in the primal graph $G$ and their corresponding  {\em edge vectors} $p(v)-p(u)$ in a geometric placement $p$ of $G$. 

\paragraph{\bf Stressed frameworks.} An {\em equilibrium stress} or, shortly, a {\em stress} on a planar (finite or infinite, possibly crossing) framework\footnote{Also called a {\em self-stress} in the rigidity theory literature.} is an assignment $s: E \rightarrow \R$ of scalar values $\{ s_e \}_{e\in E}$  to the edges $E$ of $G$ in such a way that the edge vectors incident to each vertex $u\in V$, scaled by their corresponding stresses, are in equilibrium, i.e. sum up to zero:

\begin{equation}\label{Scondition}
 \sum_{e=\{ u,v\}\in \E} s_{e} (p(v)-p(u))=0, \ \ \mbox{for fixed}\ u\in \V
\end{equation}

\noindent
When all $s_{e}$ are zero, the stress $s$ is {\em trivial};  when all $s_e\not =0$, the stress is called {\em nowhere zero}. The space of all equilibrium stresses of a framework is a vector space, so if a framework has a non-trivial stress, then it is not unique; in particular, any rescaling of it is also a stress.

\paragraph{\bf Lifting.} A lifting of the planar framework $(G, p)$ is a continuous function $  H: \R^2 \rightarrow \R$ whose restriction to any face is an affine function. The lifting assigns a height, or altitude $H(q)$ to each point $q$ in $\R^2$ (seen as the plane $z=0$ in $\R^3$),  in such a way that the lifted faces are flat (all face cycle vertices lie in the same plane) and connect continuously along the lifted edge segments.  The height function is completely determined by the values $H(p(v))$ at the vertices of the framework, and its graph appears as a polyhedral surface or {\em terrain} over the face-tiling in the reference plane.  A lifting is {\em trivial} if all its faces are lifted in the same plane, that is, when $H$ is affine on $\R^2$. A lifting is {\em strict} if no two adjacent faces are lifted to the same plane.

\medskip
\noindent
With these concepts in place,  we move on to the correspondences involved in Maxwell's theorem.

\paragraph{\bf Equilibrium stress associated to a lifting.} Let $H$ be a lifting of a framework $(G,p)$. With usual dot product notation, the expression of $H$ restricted to a face $U$ takes the form 
\begin{equation}
	\label{Hform}
H(q)=\nu_U\cdot q + C_U, \ \ {\text{for}} \ q\in U \subset \R^2
\end{equation}

\noindent
where $\nu_U\in \R^2$ is the projection on the reference plane of the {\em normal} to face $U$ and $C_U\in \R$.

\medskip \noindent
The system of vectors and constants $H\equiv (\nu_U,C_U)_{U\in \F}$ is subject to the 
compatibility conditions on edges $\{u,v\}$ shared by pairs of adjacent faces $\{U,V\}$:

$$ \nu_U\cdot p(u) +C_U=\nu_V\cdot p(u) + C_V  $$
\begin{equation}
	\label{compat}
\nu_U\cdot p(v) +C_U=\nu_V\cdot p(v) + C_V 
\end{equation}

\medskip
\noindent
From this, we infer that the vector $\nu_V-\nu_U$ is orthogonal to the edge vector $p(v)-p(u)$. Fig.~\ref{fig:Signs} illustrates the relationship.

\begin{wrapfigure}{l}{0.34\textwidth}
\vspace{-22pt}
\centering
{\includegraphics[width=0.33\textwidth]{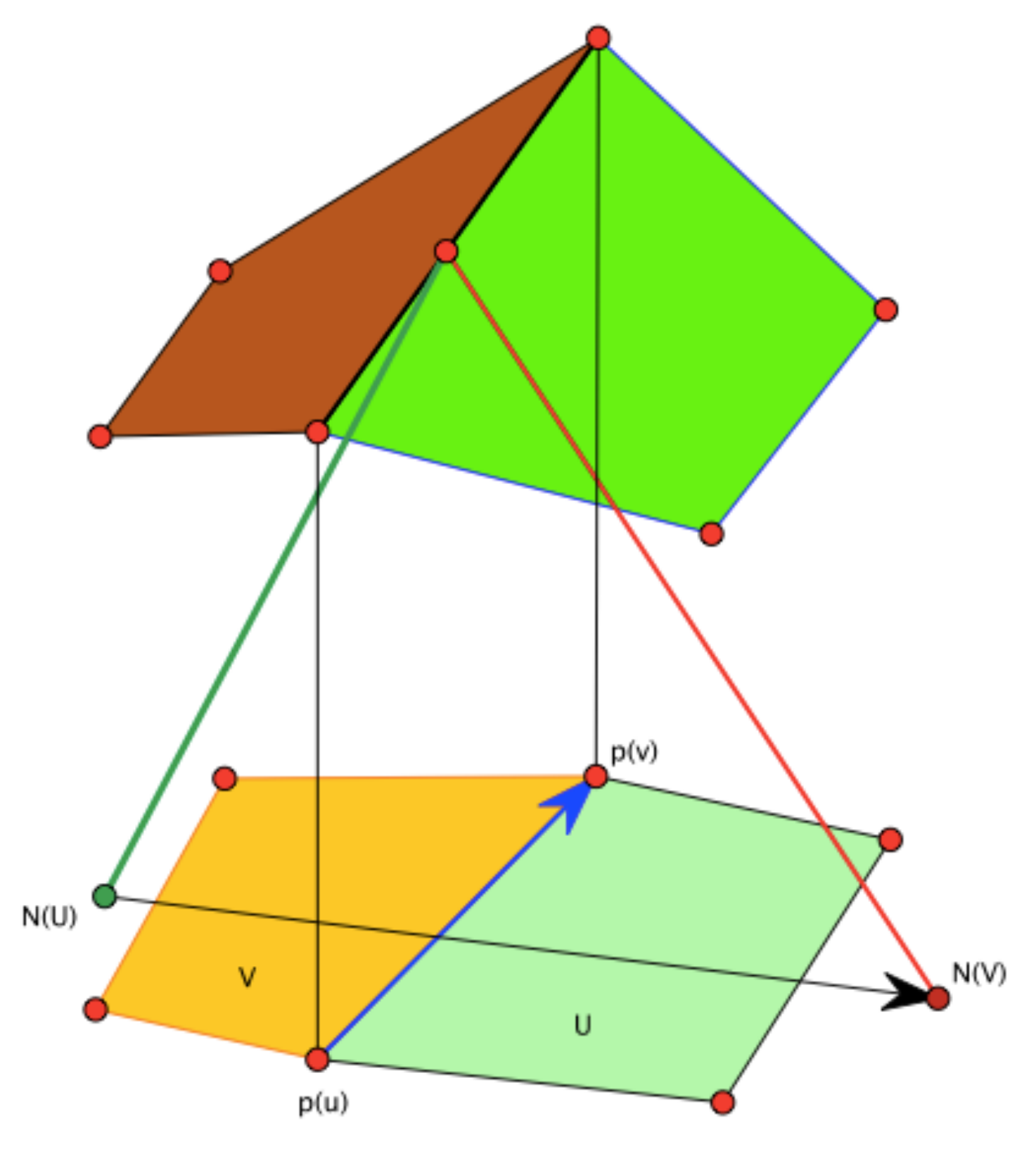}}
\vspace{-4pt}
\caption{The normals to two adjacent lifted faces induce the dual orthogonal edge.}
\vspace{-16pt}
\label{fig:Signs}
\end{wrapfigure}

\medskip \noindent
Given a tetrad $(u,v,U,V)$ of dual edges, and using the notation $(x,y)^{\perp}=(-y,x)$ for the clockwise rotation with $\pi/2$ of a vector $(x,y)\in \R^2$, we define the {\em stress factor} on the edge $\{ u,v\}\in E$, associated to the lifting $H$, as being the proportionality factor $s_{uv}$ given by:

\begin{equation}\label{factor}
\nu_V-\nu_U=s_{uv}(p(v)-p(u))^{\perp}
\end{equation}

\medskip \noindent
Since the sum involves the vectors around a closed polygon (the face-cycle around a vertex), the equilibrium condition (\ref{Scondition}) is satisfied. This proves:

\begin{prop}\label{HtoS}
For any lifting $H$ of a planar non-crossing framework $(G,p)$, there exists a canonically associated equilibrium stress on the framework.
\end{prop}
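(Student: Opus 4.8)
The plan is to take the stress factors $s_{uv}$ already singled out by equation~(\ref{factor}) and to verify that they satisfy the equilibrium condition~(\ref{Scondition}) at every vertex. First I would confirm that~(\ref{factor}) defines a genuine function on the \emph{unoriented} edge set, that is, $s_{uv}=s_{vu}$. Reversing the orientation of the edge replaces the tetrad $(u,v,U,V)$ by $(v,u,V,U)$, so both sides of~(\ref{factor}) change sign at once, using $(p(u)-p(v))^{\perp}=-(p(v)-p(u))^{\perp}$; hence the proportionality factor is unaffected. Since $H$ is affine on each face, the projected normals $\nu_U$ are unambiguously determined by the lifting, so each $s_{uv}$ is well defined, independently of any extraneous choice.

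Next I would rewrite the defining relation so that it produces exactly the summands of the equilibrium condition. Applying the rotation $\perp$ a second time to~(\ref{factor}) and using $(w^{\perp})^{\perp}=-w$ for every $w\in\R^2$, I get
\begin{equation}\label{eq:stresssummand}
s_{uv}\,(p(v)-p(u)) = (\nu_U-\nu_V)^{\perp}.
\end{equation}
Thus each term of the equilibrium sum at a vertex is, up to the fixed rotation $\perp$, a \emph{difference of the normals of the two faces adjacent to that edge}.

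The main step then consists in fixing a vertex $u$ and summing~(\ref{eq:stresssummand}) over all edges incident to $u$. Because the framework is non-crossing and each face boundary is a simple finite polygon, the faces surrounding $u$ form a single cyclic sequence $F_1,\dots,F_k$, with the $i$-th incident edge separating $F_{i-1}$ from $F_i$ (indices modulo $k$); the orientation rule underlying the tetrad guarantees that the corresponding summand is $(\nu_{F_{i-1}}-\nu_{F_i})^{\perp}$. Pulling the linear operator $\perp$ outside the finite sum (finiteness is ensured by the finite valency of $u$), the cyclic sum $\sum_{i}(\nu_{F_{i-1}}-\nu_{F_i})$ telescopes to zero, since every face normal $\nu_{F_j}$ occurs exactly once with each sign. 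Consequently $\sum_{e=\{u,v\}}s_{uv}(p(v)-p(u))=0$, which is precisely~(\ref{Scondition}), and the assignment $s$ is the desired equilibrium stress.

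The only genuinely delicate point is the bookkeeping of signs and orientations: I must check that the tetrad convention assigns the two faces adjacent to each incident edge \emph{consistently} as one travels once around $u$, so that the normal differences line up head-to-tail and actually telescope rather than accumulate. Everything else — the symmetry $s_{uv}=s_{vu}$, the double application of $\perp$, and the vanishing of the cyclic sum — is routine. I would also note that the constants $C_U$ play no role in the construction, reflecting the fact that the associated stress depends only on the face normals, hence on the lifting modulo an additive affine function on $\R^2$.
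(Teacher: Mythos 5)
Your proposal is correct and follows essentially the same route as the paper: the paper's one-line argument that ``the sum involves the vectors around a closed polygon (the face-cycle around a vertex)'' is precisely your telescoping of the differences $\nu_{F_{i-1}}-\nu_{F_i}$ over the cyclic sequence of faces around $u$, after pulling the rotation $\perp$ outside the sum. Your additional checks (the symmetry $s_{uv}=s_{vu}$ under orientation reversal and the identity $(w^{\perp})^{\perp}=-w$) only make explicit what the paper leaves implicit.
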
 

\noindent
The correspondence between liftings and stresses described above is essentially the one given by Maxwell, who formulated it through the following geometric construction. The normal direction to the planar region corresponding to a face $U\in \F$ in the lifted terrain is given by $  N_U=(\nu_U,-1)\in \R^3, \  U\in \F$. When all these normal vectors are taken through the point $(0,0,1)$, they intersect the reference plane $z=0$ in the system of points $\{\nu_{U}\in \R^2\}_{U\in F}$.  The classical ``theorem of the three perpendiculars'' implies the orthogonality  $(\nu_V-\nu_U)\cdot (p(v)-p(u)) =0$ observed above.  In Figure~\ref{fig:Signs} we see the normals to two adjacent lifted faces, taken from a point of the lifted common edge. They intersect the reference plane in $N(U)$ and $N(V)$, with vectors $p(v)-p(u)$ and $N(V)-N(U)$ orthogonal.

\paragraph{\bf Reciprocal diagram.} A framework $(G^*,p^*)$ associated to the dual graph $G^*$ of a planar framework $(G,p)$ is called a {\em reciprocal diagram} if the corresponding primal-dual edges are perpendicular. If in the previous construction we join the points $\{\nu_U\in \R^2\ | \ U\in \F\}$ by edges dual to the primal ones, we obtain a reciprocal diagram associated to the lifting $H$. We note that it is possible for several vertices $\nu_U$ to coincide, and this happens precisely when several planar regions in the lifting have identical normal directions. An extreme case arises for liftings with globally affine functions $H$. They give a planar (trivial) terrain over the reference plane, have constant $\nu_U$ and induce the trivial stress $\{s_{e}\}_{e\in E}=0$.

\paragraph{\bf From equilibrium stresses to liftings.} The direction from stresses to Maxwell liftings requires more work.

\begin{prop}\label{StoH}
Let $s=(s_e)_{e\in \E}$ be an equilibrium stress for the framework $(G,p)$. Then there exists a lifting $H$ which induces $s$, determined up to addition of a global affine function.
\end{prop}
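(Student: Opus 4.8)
The plan is to invert the construction of Proposition~\ref{HtoS}. An equilibrium stress $s$ gives us, for each edge $\{u,v\}$, a prescribed jump $\nu_V-\nu_U=s_{uv}(p(v)-p(u))^{\perp}$ of the (projected) normal across that edge. So first I would reconstruct the family of normals $(\nu_U)_{U\in\F}$ by ``integrating'' these prescribed jumps. Fix a base face $U_0$ and set $\nu_{U_0}=0$. For any other face $U$, choose a path in the dual graph $G^*$ from $U_0$ to $U$ and sum the prescribed jumps along the path; this defines a candidate $\nu_U\in\R^2$.

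The main obstacle is \emph{well-definedness}: the value $\nu_U$ must be independent of the chosen dual path, equivalently the sum of jumps around every closed dual cycle must vanish. By the orientation rule and the tetrad correspondence, the dual cycles to consider are the face-cycles around primal vertices, and the jump summed around the dual face corresponding to a vertex $u$ is exactly
\begin{equation}
\sum_{e=\{u,v\}\in\E} s_{uv}\,(p(v)-p(u))^{\perp}
 = \Bigl(\sum_{e=\{u,v\}\in\E} s_{uv}\,(p(v)-p(u))\Bigr)^{\perp},
\end{equation}
which is the equilibrium condition~(\ref{Scondition}) rotated by $\perp$, hence zero precisely because $s$ is an equilibrium stress. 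Since these vertex-cycles generate the cycle space of the (connected, planar) dual graph, the path-independence holds on all closed cycles, so $(\nu_U)_{U\in\F}$ is well defined. By construction $\nu_V-\nu_U$ is orthogonal to $p(v)-p(u)$ for every shared edge, which is the first half of the compatibility~(\ref{compat}).

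Next I would recover the additive constants $C_U$. The orthogonality $(\nu_V-\nu_U)\cdot(p(v)-p(u))=0$ makes the two equations in~(\ref{compat}) equivalent to the single condition that $(\nu_V-\nu_U)\cdot q + (C_V-C_U)$ vanishes at one point of the shared edge, say $p(u)$; so I set the prescribed jump $C_U-C_V := (\nu_V-\nu_U)\cdot p(u)$ and integrate these scalar jumps across the dual graph just as before, fixing $C_{U_0}=0$. The consistency check is again that the summed scalar jumps around each vertex-cycle cancel; this follows from the vector well-definedness already established together with the fact that the evaluation points $p(u)$ along a dual face-cycle return to their start, so no new obstruction arises. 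This determines each $C_U$ up to the two-parameter freedom in $\nu_{U_0}$ and the one-parameter freedom in $C_{U_0}$, i.e.\ exactly up to a global affine function.

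Finally I would assemble $H$ by declaring $H(q)=\nu_U\cdot q+C_U$ on each closed face $U$; the compatibility conditions~(\ref{compat}) guarantee that these affine pieces agree on shared edges, so $H$ is a well-defined continuous piecewise-affine function on $\R^2$, i.e.\ a lifting. A short verification that the stress factor of this $H$, computed via~(\ref{factor}), reproduces $s_{uv}$ closes the argument, and the freedom isolated above is precisely the claimed global affine ambiguity. I expect the cycle-space/well-definedness step to be the crux; everything after it is bookkeeping.
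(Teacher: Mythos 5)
Your proposal is correct and follows essentially the same route as the paper: fix a base face, integrate the prescribed jumps $s_{uv}(p(v)-p(u))^{\perp}$ along dual paths, and reduce path-independence to the vanishing of the jump sums around the dual face-cycles at primal vertices, which is the rotated equilibrium condition (\ref{Scondition}). The one claim you assert rather than prove---that these vertex face-cycles generate the cycle space of the (infinite) dual graph---is precisely what the paper makes explicit via Jordan's curve theorem and local finiteness of the placement, and your telescoping verification of the constants $C_U$ at the common evaluation point $p(u)$ is the same identity $(q_i-p_i)^{\perp}\cdot p_i=\det(q_i\ p_i)$ that the paper uses, in slightly slicker packaging.
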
 

\medskip \noindent
\begin{proof} We have to find a set of parameters $(\nu_U,C_U)$, indexed by faces and satisfying the conditions (\ref{compat}) and (\ref{factor}) in terms of the given placement $p$. Let us choose an initial face $U_0$ with an arbitrary lifting $(\nu_{U_0},C_{U_0})=(\nu_0,C_0)$. We show that once this initial choice has been made, the lifting is then uniquely determined by the placement $p$ and the stress values $s$.

\medskip \noindent
For this purpose, we solve the linear system (\ref{compat}) in a step-by-step manner, progressing from face to adjacent face, starting at $U_0$. We remark that an infinite graph induces an infinite such linear system, but nevertheless it can be solved incrementally, as we now show. We consider a {\em path} through adjacent faces labeled $U_0,U_1,...,U_n$,
with corresponding liftings $(\nu_i,C_i)$ and successive tetrads $(p_i,q_i,U_i,U_{i+1})$. Thus,
$[p_i,q_i]$ is the common edge between faces $U_i$ and $U_{i+1}$, with the proper orientation induced by the direction in which we walk through the path of adjacent faces, and the orientation rule described earlier.
The given stress on this edge is denoted here by $s_i$. 

\medskip \noindent
We first compute the parameters $\nu_k$ by unfolding the relationships (\ref{factor}) along the path from $U_0$ to $U_k$: 

\begin{equation}
	\label{recursion}
\nu_{k+1}= \nu_k + s_k(q_k-p_k)^{\perp} = \cdots = \nu_0 + \sum_{i=0}^k s_i(q_i-p_i)^{\perp}
\end{equation}

\medskip \noindent
Similarly, we compute the parameters $C_k$ by unfolding the first of the two relationships (\ref{compat}) along the path from $U_0$ to $U_k$, and using (\ref{factor}) at each step: 

\begin{equation}
	\label{recursionC}
C_{k+1}=C_k - (\nu_{k+1}-\nu_k)\cdot p_k= \cdots = C_0 -\sum_{i=0}^k s_i (q_i-p_i)^{\perp}\cdot p_i
\end{equation}

\medskip \noindent
Using  the identity $ (q_i-p_i)^{\perp}\cdot p_i =det(q_i\ p_i)$ and the notation $|q_i\ p_i|:=det(q_i\ p_i)$ in (\ref{recursion}) and (\ref{recursionC}), the expression (\ref{Hform}) of the height function becomes:
\begin{equation}\label{path}
H(q)=\nu_n\cdot q +C_n=(\nu_0+\sum_{i=0}^{n-1} s_i(q_i-p_i)^{\perp})\cdot q +
(C_0-\sum_{i=0}^{n-1} s_i|q_i\ p_i|)
\end{equation}

\noindent
It remains to check that the expression (\ref{path}) is independent of the face-path chosen from $U_0$ to $U_n$. 
To verify this property, we have to check that the following sums vanish for any face-cycle:

\begin{equation}\label{Fcycles}
\sum_{i\in face-cycle} s_i(q_i-p_i) =0  \ \  and \ \ \sum_{i\in face-cycle} s_i|q_i\ p_i|=0
\end{equation}

\medskip \noindent
It suffices to verify these relations over face-cycles corresponding to simple topological loops. In this case, Jordan's simple curve theorem gives a set of vertices inside the loop. When we sum the equilibrium stress condition (\ref{Scondition}) over the vertices inside the loop, terms cancel in pairs for adjacent vertices and yield the first identity. For the second part, we observe that the equilibrium stress condition (\ref{Scondition}) induces the identity $   | \sum_{\{ u,v\}\in \E} s_{uv} (p(v)-p(u))\  p(u)| =0, \ \ \mbox{for fixed vertex} \ u $. Rewritting it as $\sum_{\{ u,v\}\in \E} s_{uv} |p(v)\ p(u)|=0$ and again taking the sum over the vertices inside the loop, we obtain the second identity in (\ref{Fcycles}). 

\medskip
\noindent
Since the initial choice $(\nu_0,C_0)$ was arbitrary, the lifting $H$ is determined only up to a global affine function.  
\end{proof}

\paragraph{\bf Mountain-valley edge liftings and stress factor signs.}
We review now an elementary geometrical criterion for determining the sign of a stress factor $s_{uv}$ in a stress $s$ associated to a lifting.  Let us consider the lifted 3D polygonal surface projecting over the faces of the graph in the reference plane, as in Figure~\ref{fig:liftFiniteGraph}, and let $uv$ be part of a tetrad $(u,v,U,V)$. A vertical translation of the whole polygonal surface does not affect the stress considerations. We assume therefore that the lifting of the two faces $U$ and $V$ is above the reference plane. Figure~\ref{fig:Signs} illustrates the case of a negative stress factor $s_{uv}$, corresponding visually with a lifted edge that looks like a `mountain ridge'. The normal directions to the lifted faces over $U$ and $V$ are here taken through a point of the lifted common edge and intersect the reference plane in points denoted $N(U)$ and $N(V)$. For the tetrad $(u,v,U,V)$, the sign of the stress factor is that of the determinant $|p(v)-p(u)\ N(V)-N(U)|$ and is consistent with the orientation rule described in Section \ref{sec:preliminaries}. 

\medskip \noindent
For the geometrical interpretation, let us  look at the {\em dihedral angle} of the lifted faces, understood as the intersection of the two half-spaces below the respective supporting planes. Then, when the measure of this dihedral angle is less than $\pi$, we have a negative stress factor and the visual landscape suggestive of a mountain range, 
while for a dihedral angle greater than $\pi$, we have a positive stress factor and the visual landscape suggestive of a valley. When the dihedral angle is exactly $\pi$, the stress factor is zero and the two lifted faces are in the same plane. A non-flat edge in the lifting is therefore said to be a {\em mountain} edge if the terrain is concave in its neighborhood, and a {\em valley} otherwise. The correspondence between stresses and liftings is now refined by this well-known property \cite{CW}: 

\begin{prop}
\label{prop:signStress}
The Maxwell correspondence between stressed graphs and liftings takes planar edges with a negative stress to mountain edges in the 3D lifting, those with positive stress to valley edges and those with zero stress to the common plane of the two adjacent lifted faces. 
\end{prop}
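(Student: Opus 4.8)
The plan is to reduce everything to the local picture at a single tetrad $(u,v,U,V)$ and to trace the sign of $s_{uv}$ through the defining relation (\ref{factor}). The zero case is immediate: by (\ref{factor}), $s_{uv}=0$ exactly when $\nu_U=\nu_V$, and then the compatibility conditions (\ref{compat}) force $C_U=C_V$ as well, so the two adjacent faces lift into a single common plane, as claimed. The substance of the proposition therefore lies in the two nonzero cases, which I would treat simultaneously by keeping careful track of one sign.

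First I would record the algebraic identity behind the sign rule. Writing $w=p(v)-p(u)$ and substituting (\ref{factor}) into the determinant $|p(v)-p(u)\ \ N(V)-N(U)|$ noted above (with $N(U)=\nu_U$, $N(V)=\nu_V$), a one-line computation gives $|w\ \ \nu_V-\nu_U|=s_{uv}\,|w|^2$, so the determinant and the stress factor share their sign. This reduces the proposition to a purely geometric statement: that this determinant is negative precisely for mountain (concave) edges and positive for valley (convex) edges.

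The geometric heart of the argument is the jump in the slope of $H$ as one crosses the shared edge. Let $d$ be the in-plane direction orthogonal to the edge and pointing from $U$ into $V$. Since $H$ is affine on each face by (\ref{Hform}), its directional derivative along $d$ equals $\nu_U\cdot d$ on the $U$-side and $\nu_V\cdot d$ on the $V$-side, so the terrain is a ridge (concave, mountain) exactly when this slope drops across the edge, i.e.\ when $(\nu_V-\nu_U)\cdot d<0$, and a valley exactly when $(\nu_V-\nu_U)\cdot d>0$. The decisive link is the orientation rule of Section~\ref{sec:preliminaries}: since the tetrad is oriented counter-clockwise around $V$ and clockwise around $U$, the face $V$ lies to the left of the edge-vector $w$, which is precisely the side into which $w^{\perp}$ points. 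Hence $d$ is a positive multiple of $w^{\perp}$, so $(\nu_V-\nu_U)\cdot d=s_{uv}\,(w^{\perp}\cdot d)$ with $w^{\perp}\cdot d>0$, and the sign of the slope-jump agrees with the sign of $s_{uv}$. Combining, negative stress gives a mountain edge and positive stress a valley edge, which is the assertion.

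I expect the main obstacle to be the orientation bookkeeping rather than any computation: one must check that the three sign conventions in play --- the algebraic definition $(x,y)^{\perp}=(-y,x)$, the clockwise/counter-clockwise convention fixing the tetrad, and the analytic meaning of ``concave equals mountain'' --- are mutually consistent, and that the normalization placing both faces above the reference plane (used to speak of dihedral angles and of normals taken through a common lifted point) does not silently flip a sign. It also helps to note explicitly that the slope-jump test is independent of the point of the edge and of how far into each face one probes, since $H$ is affine on each face; this is what lets the purely local computation settle the global mountain/valley labeling of every edge at once.
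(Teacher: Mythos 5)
Your proof is correct, and it follows a genuinely different route from the paper's. The paper does not give a formal proof of Proposition~\ref{prop:signStress}: it treats the statement as a well-known property (citing \cite{CW}) and justifies it by the geometric discussion preceding the statement --- translate vertically so both lifted faces lie above the reference plane, take the two face normals through a point of the lifted common edge, let them meet the reference plane at $N(U)$ and $N(V)$, assert that the sign of $s_{uv}$ is that of the determinant $|p(v)-p(u)\ \ N(V)-N(U)|$, and then read off mountain/valley from the dihedral angle of the lifted faces (less than $\pi$: mountain and negative; greater than $\pi$: valley and positive; equal to $\pi$: flat and zero). You instead argue analytically: the identity $|w\ \ \nu_V-\nu_U|=s_{uv}|w|^{2}$ pins the determinant's sign to that of $s_{uv}$, and the mountain/valley dichotomy is reduced to whether the directional derivative of the piecewise-affine height function drops or rises as one crosses the edge, i.e.\ to the sign of $(\nu_V-\nu_U)\cdot d$ with $d$ pointing from $U$ into $V$; the tetrad rule makes $d$ a positive multiple of $w^{\perp}$ and closes the loop. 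Your approach buys a self-contained verification --- the geometric content is just the one-variable fact that a concave piecewise-linear function has decreasing slope --- and it forces the orientation conventions into the open; the paper's approach buys the connection with the reciprocal diagram and the classical three-perpendiculars picture, but delegates the sign bookkeeping to the figure and to the literature. Two details you handle correctly are worth making explicit: your identification $N(U)=\nu_U$, $N(V)=\nu_V$ corresponds to taking the normals through $(0,0,1)$ as in the Maxwell construction, whereas the paper's mountain/valley paragraph takes them through a point of the lifted edge, so there $N(V)-N(U)$ is only a positive multiple $z_{0}(\nu_V-\nu_U)$ of your vector --- positive precisely because of the above-the-plane normalization you flag; and your reading of $w^{\perp}$ as the left turn, pointing into the face $V$ around which the traversal is counter-clockwise, is the reading under which the stated sign rule is true, notwithstanding the paper's labeling of $(x,y)^{\perp}=(-y,x)$ as a clockwise rotation, so the consistency worry you raise at the end is real and your resolution of it is the right one.
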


\begin{figure}
\centering
\subfigure[]{\includegraphics[width=0.2\textwidth]{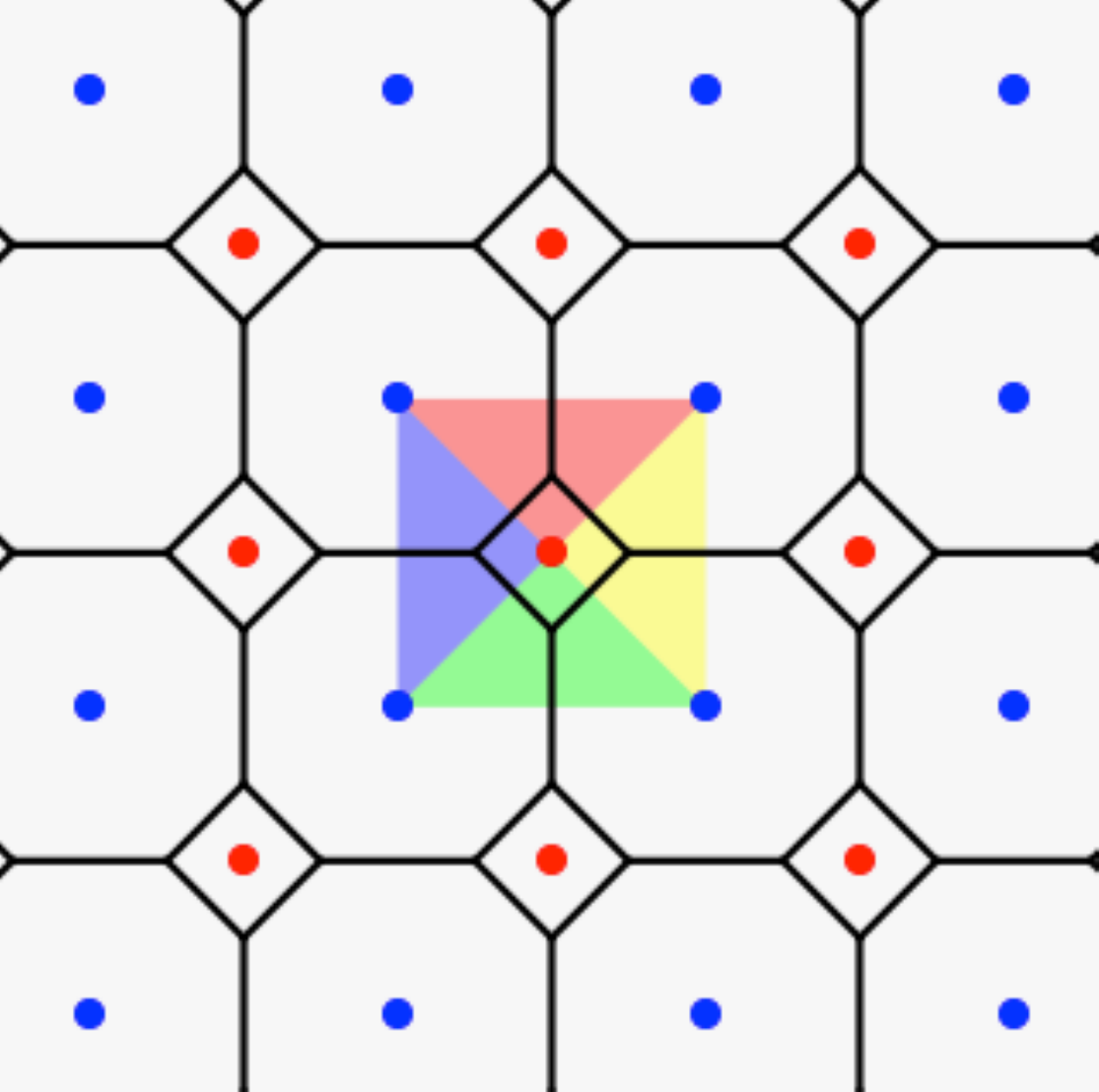}}
\hspace{6pt}
\subfigure[]{\includegraphics[width=0.39\textwidth]{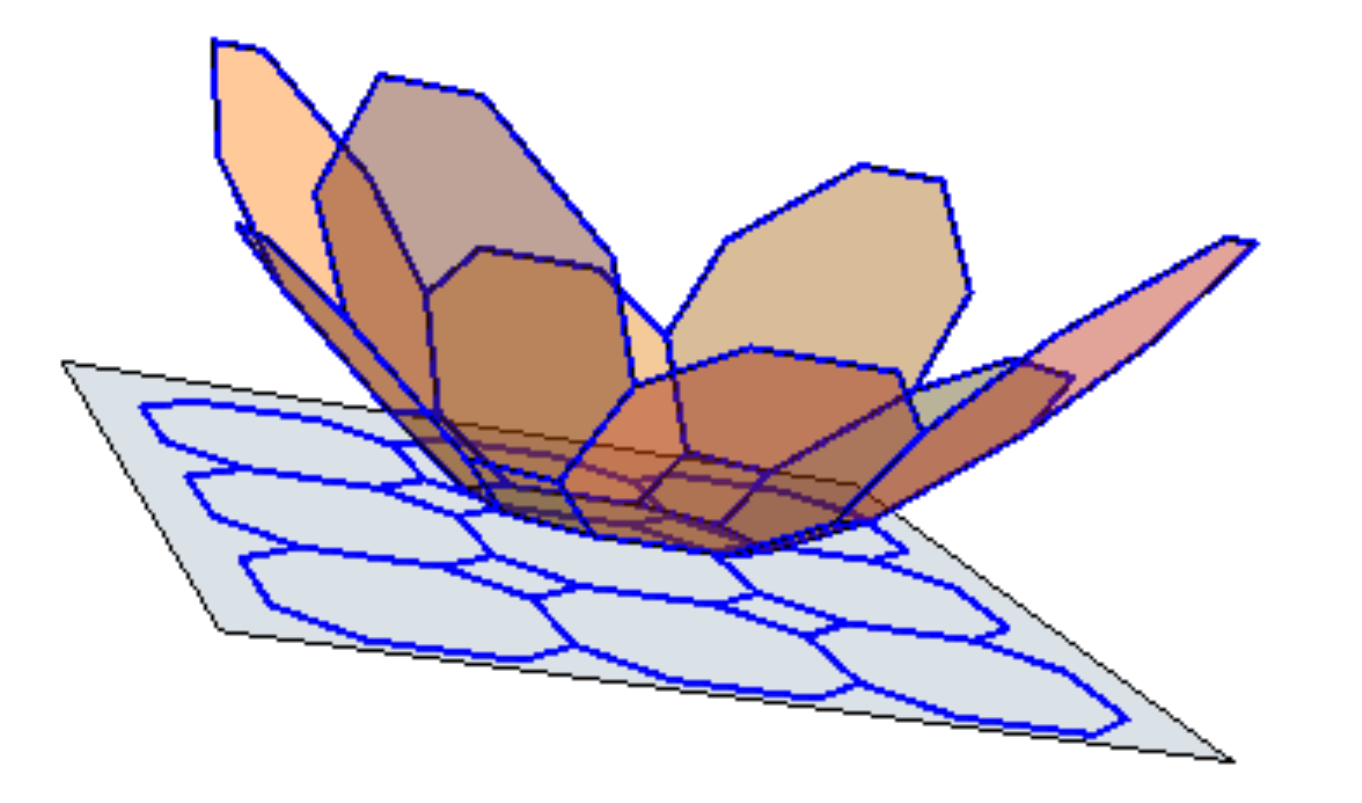}}
\subfigure[]{\includegraphics[width=0.2\textwidth]{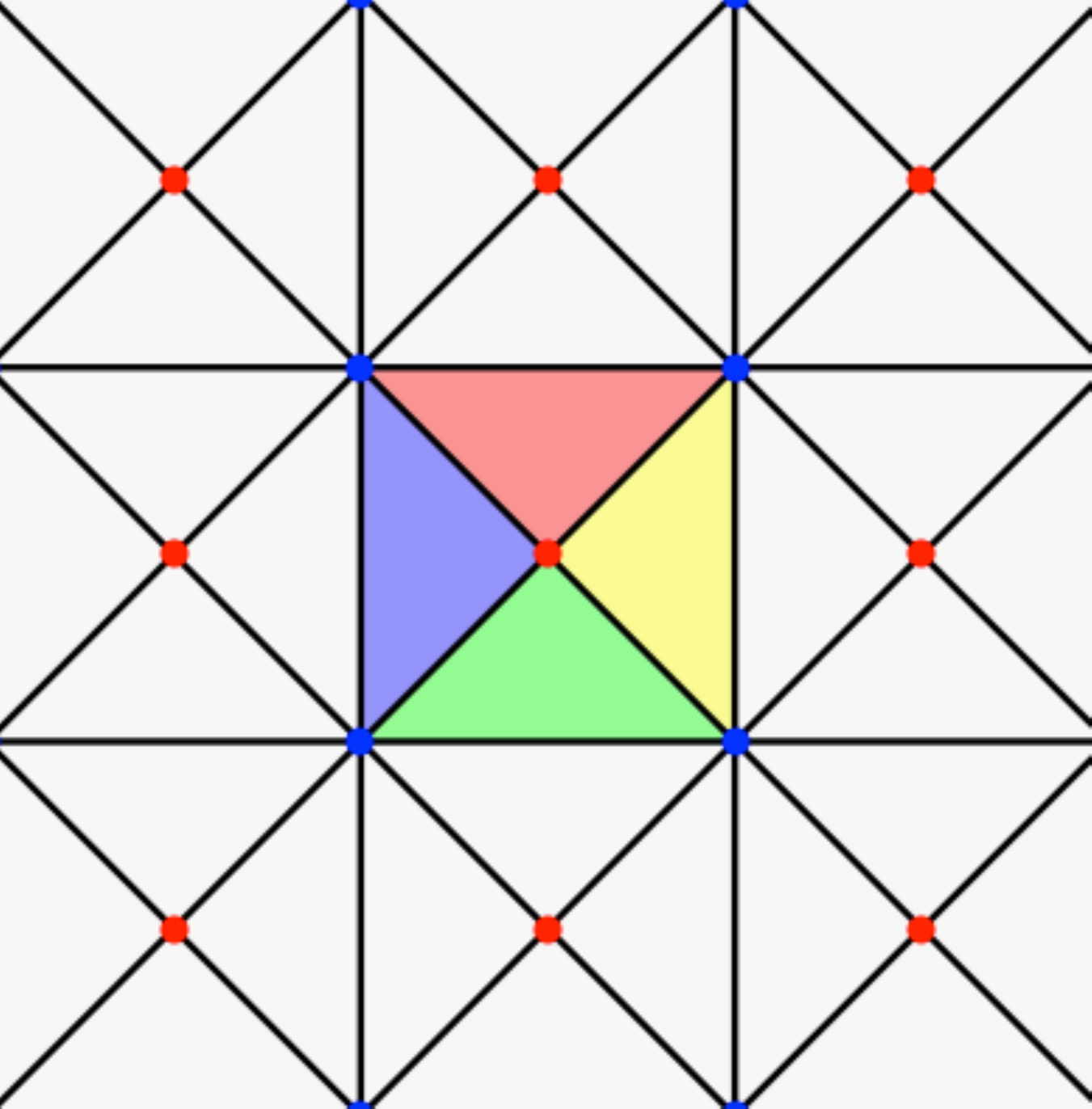}}
 \caption{ (a) A Delaunay framework (in black) corresponding  to a periodic set with four site orbits under maximal translational symmetry. Dual vertices are shown as colored centers of the
faces. Primal-dual edge pairs are orthogonal. (b) The equilibrium stress induced by the reciprocal diagram has a non-periodic lifting to the paraboloid $z=x^2+y^2$. (c) The reciprocal diagram of the periodic graph on the left is the Voronoi diagram of the periodic sites.}
 \label{fig:VoronoiDelauney}
\end{figure}

\medskip \noindent
{\bf Comment.}\ As mentioned, this correspondence between liftings and  stresses is a direct adaptation of the one formulated by Maxwell in the finite setting \cite{M1,M2} (see also the expositions given in \cite{hopcroft:kahn:paradigmRobustAlg:1992,CW,RG}). The arrangement and presentation given in this section, including the notation, are meant to serve as a preamble for the more elaborate  {\em periodic} version, which relies on notions developed recently in our deformation theory of periodic frameworks \cite{BS2,BS3,BS4}. 

\medskip
\noindent
We conclude this section with a noteworthy example, illustrated in Fig.~\ref{fig:VoronoiDelauney}, which will be used in the next two sections to illustrate the critical distinction between equilibrium stress and periodic stress.
\begin{examp}{\bf \  (Delaunay tesselations and Voronoi diagrams of periodic point sets)}\  \ 
The classical Voronoi-Delauney duality, applied to a (countable locally finite) periodic point set $p$ in $\R^2$, yields a dual pair of non-crossing periodic frameworks whose corresponding dual edges are orthogonal.  For a fixed point $p_i$ in the given set $p$, its (open) Voronoi cell is defined as the planar domain made of all the points in $\R^2$ whose distance to $p_i$ is strictly less than the distance to any other point in the set $p$. The union of all these cell boundaries give the vertices and edges of the Voronoi framework. The Delauney framework is its dual, with vertices at the original point set $p$ and with edges orthogonal to those of the Voronoi framework; its faces are convex polygons whose vertices are cocircular, and the circumscribed cirles are empty of any other vertices in the given point set \cite{delaunay:SphereVide:1934}. Fig.~\ref{fig:VoronoiDelauney} gives an illustration with periodicity lattice generated by the standard basis. There are four orbits of vertices, with representatives at the corners of one of the diamonds in Fig.~\ref{fig:VoronoiDelauney}(a). All faces are inscribed polygons and the centers of the corresponding circumscribed circles give the vertices of the Voronoi diagram.
\end{examp}

\section{Equilibrium and periodic stresses on periodic frameworks} 
\label{sec:periodicLiftingStress}

We turn now to the main object studied in this paper, the infinite {\em periodic framework} as defined in \cite{BS2,BS3}. We emphasize from the outset that a periodic graph is not just an infinite graph, as it was in the previous section: the definition {\em includes} a periodicity group of graph automorphisms. In this paper we focus on the {\em planar case} $d=2$ and on {\em connected non-crossing periodic frameworks}. Some statements and constructions will be valid in broader contexts, but for the sake of a streamlined presentation we will stay within this class.

\paragraph{\bf Planar periodic frameworks.}
A 2-periodic framework \cite{BS2,BS3}, denoted as $(G,\Gamma, p,\pi)$, is given by an infinite graph $G$, a periodicity group $\Gamma$ acting on $G$, a placement $p$ and a representation $\pi$. The graph $G=(\V,\E)$ is simple (has no multiple edges and no loops) and connected, with an infinite set of vertices $\V$  and (unoriented) edges $\E$. The {\em periodicity group} $\Gamma\subset Aut(G)$ is a free Abelian group of rank two acting on $G$ without fixed vertices or fixed edges.  We consider only the case where the quotient  multigraph $G/\Gamma$ (which may have loops and multiple edges) is {\em finite}, and use $n=|\V/\Gamma|$  and $m=|\E/\Gamma|$ to denote the number of vertex and edge orbits.  The function $p:\V\rightarrow \R^2$ gives a specific placement of the vertices as points in the plane, in such a way that any two vertices joined by an edge in $\E$ are mapped to distinct points. The {\em injective group morphism} $\pi: \Gamma\rightarrow  {\cal T}(\R^2)$ gives a faithful representation of $\Gamma$ by a {\em lattice of translations} $\pi(\Gamma)=\Lambda$ of rank two in the group of  planar translations ${\cal T}(\R^2)\equiv \R^2$.  The placement is {\em periodic} in the obvious sense that the abstract action of the periodicity group $\Gamma$ is replicated by the action of the periodicity lattice $\Lambda=\pi(\Gamma)$
on the placed vertices: $p(\gamma v)=\pi(\gamma) (p(v)), \ \ \mbox{for all}\ \gamma\in \Gamma, v\in \V$.

\paragraph{\bf Non-crossing planar periodic frameworks $G=((\V,\E,\F),\Gamma)$ in  $\R^2$.} They have an underlying planar graph $G=(\V,\E,\F)$ with the natural action of the periodicity group $\Gamma$.
The {\em dual 2-periodic  graph} $G^*=((\V^*,\E^*, \F^*),\Gamma)$ is obtained from the abstract dual of the infinite graph $G=(\V,\E,\F)$, with the periodicity group $\Gamma$ acting on it in the same manner as it acts on the sets of the primal graph $\F=\V^*, \E=\E^*, \V=\F^*$. 
 If we denote by $n^*=card(\F/\Gamma)$ the number of face orbits under $\Gamma$, then
Euler's formula for the torus $\R^2/\Lambda \supset G/\Gamma$ gives the relation: 

\begin{equation}\label{Euler}
n-m+n^*=0, \ \ \ \mbox{that is}\ \ \  n+n^*=m.
\end{equation}

\begin{figure}[h]
\centering
\subfigure[]{\includegraphics[width=0.29\textwidth]{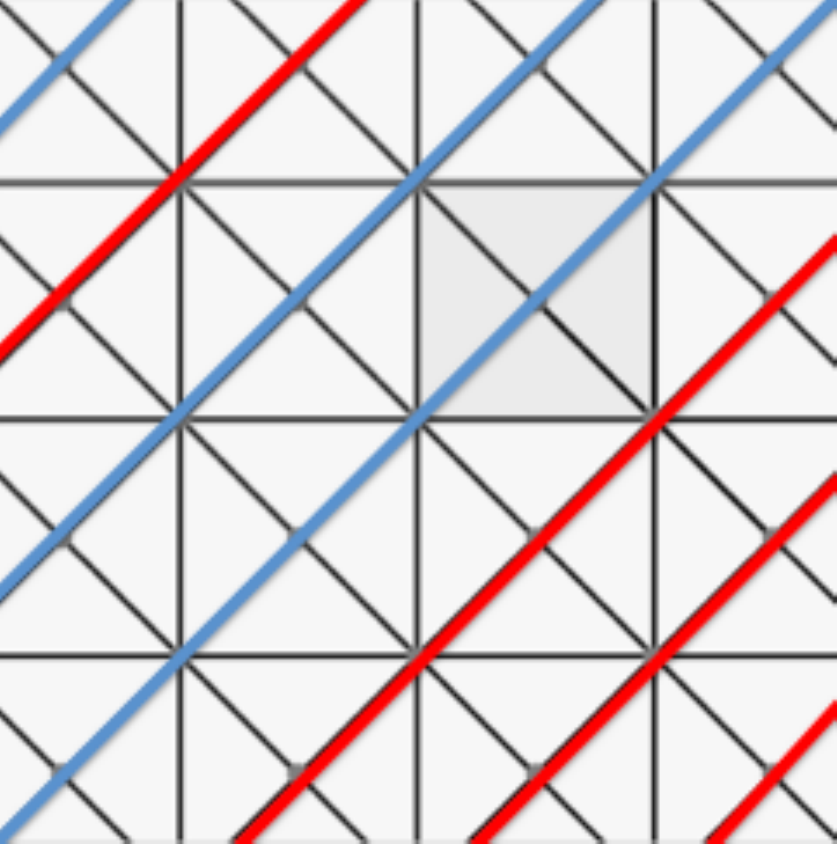}}
\hspace{6pt}
\subfigure[]{\includegraphics[width=0.29\textwidth]{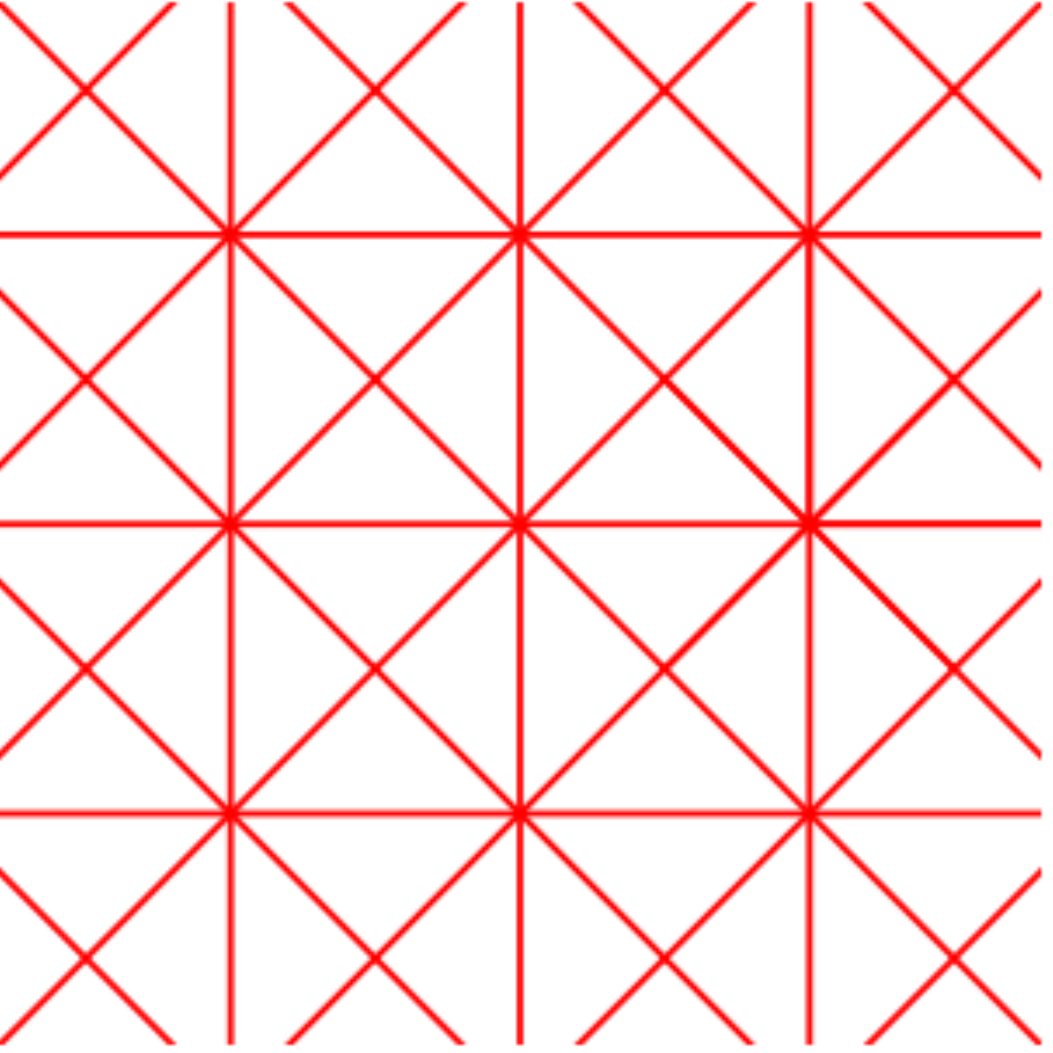}}
\hspace{6pt}
\subfigure[]{\includegraphics[width=0.29\textwidth]{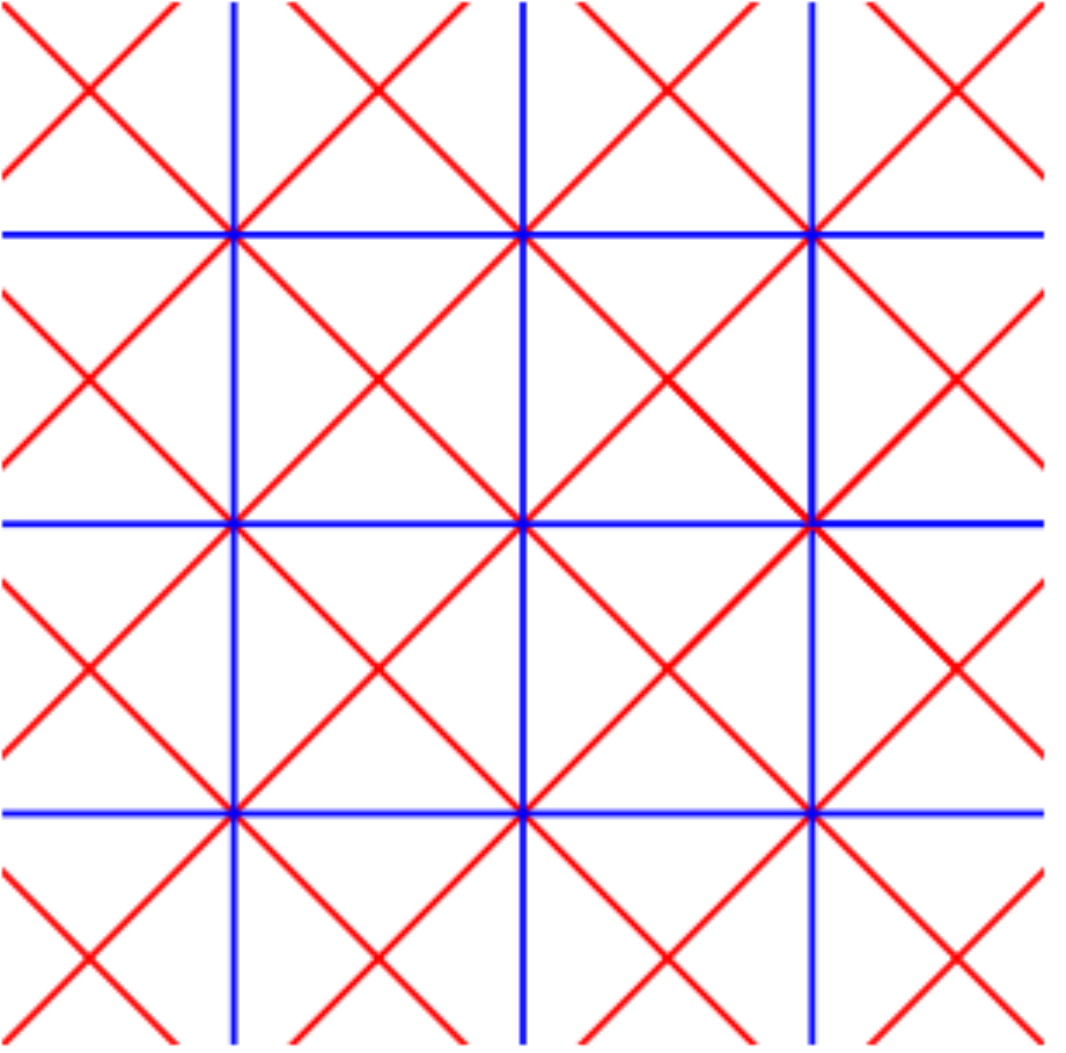}}
\vspace{-6pt}
 \caption{The Voronoi diagram from Fig.~\ref{fig:VoronoiDelauney}(c) is a periodic framework supporting different types of stresses. (a) {\bf (Not $\Gamma$-invariant)} Each of the ``aligned'' infinite paths supports a one-dimensional stress (equal on each edge of the path and illustrated here with the oblique colored diagonals). The stress values can be independently chosen on each diagonal path, and thus yield equilibrium stresses, such as the one depicted here, which are not $\Gamma$-invariant. (b) {\bf (Non-periodic $\Gamma$-invariant)} A $\Gamma$-invariant stress assigns the same stress value on all edges in an edge orbit. Illustrated here is a stress where all stress orbits have the same sign; this cannot be a periodic stress. (c) A {\bf periodic stress}, as the one shown here, must have both positive and negative stresses on edge orbits.}
\vspace{-16pt}
 \label{fig:TypesOfStresses}
\end{figure}

\paragraph{\bf $\Gamma$-invariant equilibrium stress on a periodic framework.} An equilibrium stress $s$ of the  periodic framework $(G, p,\Gamma,\pi)$ is called a {\em $\Gamma$-invariant equilibrium stress} if it is invariant on edge orbits $\E/{\Gamma}$. A $\Gamma$-invariant equilibrium stress can be calculated by solving a finite linear system of equations of type (\ref{Scondition}), where the unknowns are the stresses for the edge representatives in $\E/{\Gamma}$ and the equations correspond to equilibrium conditions for the vertex representatives in $\V/{\Gamma}$.

\paragraph{\bf Preview: periodic stress from periodic lifting.} 
Not all equilibrium stresses of a periodic framework are $\Gamma$-invariant. Although we have not yet defined periodic stresses (Definition (\ref{Pstress}) in the next Section \ref{sec:deformationPeriodicStress}), we mention, as a preview, that they must be $\Gamma$-invariant. But this will not be sufficient: for some periodic frameworks, the periodic stresses are a {\em strict} subset of the $\Gamma$-invariant ones.  Figure~\ref{fig:TypesOfStresses} illustrates such a situation, using colors to indicate the stress sign (red for positive, blue for negative stress, gray for zero stress) for the various types of stresses that a periodic framework may support (not $\Gamma$-invariant, $\Gamma$-invariant non-periodic and periodic).

\medskip
\noindent
We will arrive at periodic stress indirectly, via periodic liftings, defined next. In the rest of this section we show that the stresses corresponding to these periodic liftings must satisfy additional constraints. Then, in the next section, we define periodic stress in rigidity theoretic terms, and show that it coincides with this constrained stress.

\begin{wrapfigure}{l}{0.36\textwidth}
\vspace{-8pt}
\centering
{\includegraphics[width=0.36\textwidth]{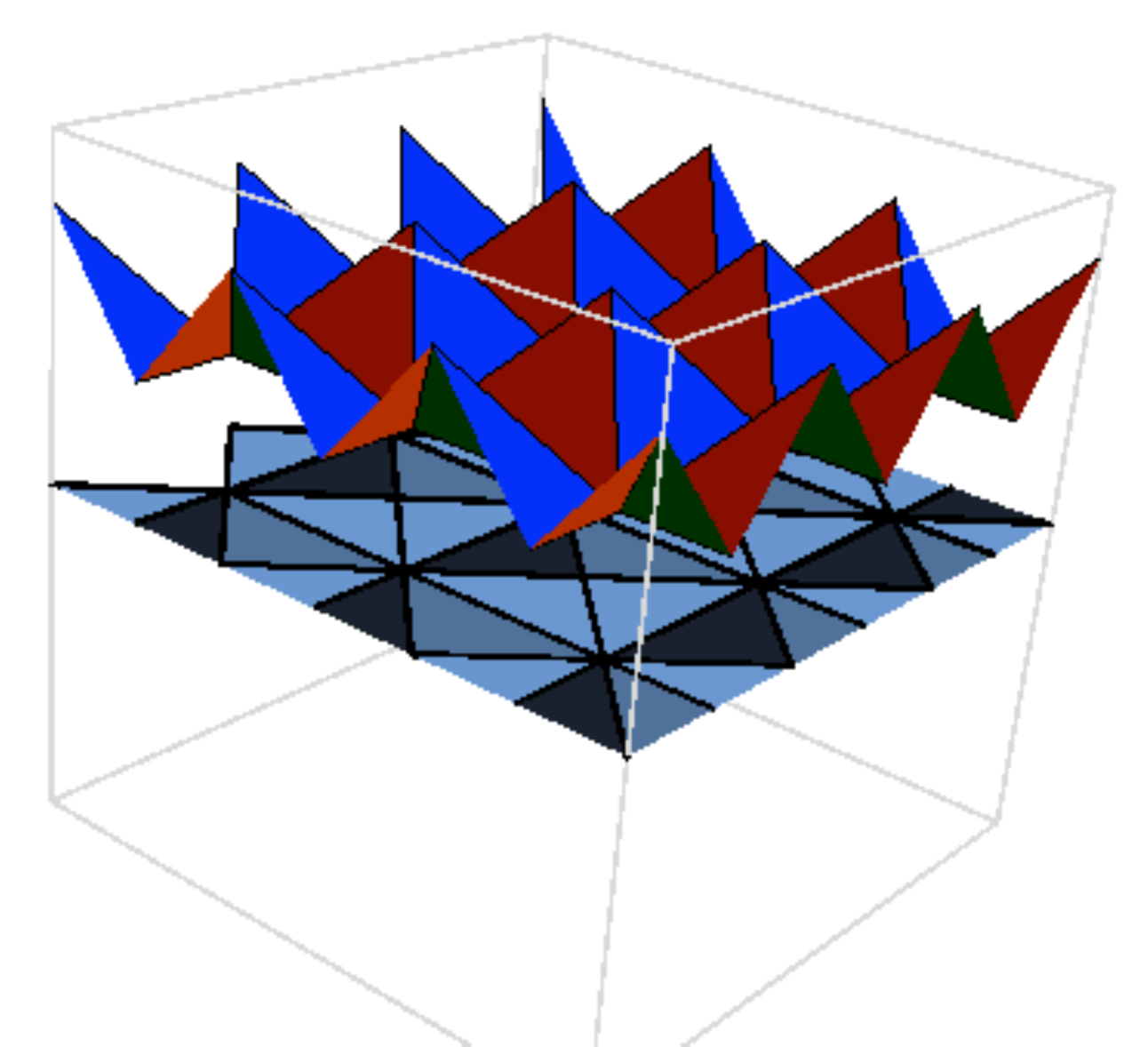}}
 \caption{A periodic 3D lifting for the stressed framework in Figure~\ref{fig:TypesOfStresses}(c).}
 \label{fig:lift3D}
\end{wrapfigure}

\paragraph{\bf Periodic liftings.} A lifting $H$ for the planar periodic framework $(G,\Gamma,p,\pi)$ is called periodic when $\Gamma$-invariant, that is, when $H(p+\lambda)=H(p)$, for all $p\in \R^2$ and $\lambda\in \Lambda=\pi(\Gamma)$, where  translation by periods has been written additively. For a face $U$ and its translate $U+\lambda$, this condition implies that:
$$\nu_{U+\lambda} = \nu_U$$
\begin{equation}\label{perLiftingCond}
C_U-C_{U+\lambda}=\nu_U \cdot \lambda
\end{equation}

\medskip
\noindent
Fig.~\ref{fig:lift3D} illustrates a periodic lifting which corresponds to the stress illustrated in Fig.~\ref{fig:TypesOfStresses}(c).

\medskip
\noindent
Since $\Gamma$-invariant stresses may not always correspond to  $\Gamma$-invariant liftings,  additional properties are needed to characterize stresses induced by $\Gamma$-invariant liftings. We proceed now to find them. 

\paragraph{\bf Stress induced by a periodic lifting.}
When expressed as the system $H\equiv (\nu_U,C_U)_{U\in \F}$, a periodic lifting
has constant coefficients $\nu_U$ on $\Gamma$-orbits of faces. Thus, there are at most $n^*=card(\F/\Gamma)$ distinct normal directions to lifted faces.

\medskip 
\noindent
For a closer investigation of the associated stress, we introduce the following notational conventions. A face-path from a face $U$ to a face $V$ will be indicated by $U\rightarrow V$, in particular, a face-path from $U$ to its translate $U+\lambda$ will be indicated by
$U\rightarrow U+\lambda$. Sums over face-paths or face-cycles are assumed to be written
according to the orientation rule given through tetrads.
With this convention, we rewrite the relations (\ref{recursion}) and (\ref{recursionC}) obtained in Section \ref{sec:preliminaries} as:

$$ \nu_V=\nu_U + \sum_{U\rightarrow V} s_i(q_i-p_i)^{\perp} $$

\begin{equation}\label{recursionBis}
C_V=C_U -  \sum_{U\rightarrow V} s_i |q_i\ p_i|
\end{equation}

Applying (\ref{perLiftingCond}) we obtain the following two conditions on the stress $s$:

\begin{equation}\label{Pnecessary}
 \sum_{U\rightarrow U+\lambda} s_i(q_i-p_i) =0 
\end{equation}

\begin{equation}\label{Cnecessary}
\sum_{U\rightarrow U+\lambda} s_i|q_i\ p_i|= C_U-C_{U+\lambda}=\nu_U\cdot \lambda
\end{equation}

\noindent
We summarize these observations as:
\begin{prop}\label{PLnecessary}
If the planar non-crossing periodic framework $(G,\Gamma,p,\pi)$ has a periodic lifting $H\equiv (\nu_U,C_U)_{U\in \F}$, then the
associated $\Gamma$-invariant equilibrium stress $s$ satisfies conditions (\ref{Pnecessary}) and (\ref{Cnecessary})
for any face $U$ and period vector $\lambda\in \Lambda=\pi(\Gamma)$.
\end{prop}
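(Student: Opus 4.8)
The plan is to obtain both conditions as a direct specialization of the path recursions (\ref{recursionBis}) to the case of a face-path that closes up \emph{modulo} a period, namely a path $U\rightarrow U+\lambda$, and then to read off the consequences of the periodicity constraints (\ref{perLiftingCond}) on the lifting. Since $G$ is connected, such a face-path exists for every face $U$ and every $\lambda\in\Lambda$, so both recursions in (\ref{recursionBis}) apply verbatim with $V=U+\lambda$.

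First I would confirm that the equilibrium stress $s$ canonically associated to $H$ by Proposition \ref{HtoS} is indeed $\Gamma$-invariant, as implicitly claimed in the statement. For an edge $\{u,v\}$ in the tetrad $(u,v,U,V)$ and its translate under $\gamma\in\Gamma$, periodicity of the lifting gives $\nu_{U+\lambda}=\nu_U$ for all faces, hence $\nu_{\gamma V}-\nu_{\gamma U}=\nu_V-\nu_U$; since $\pi(\gamma)$ is a translation, $p(\gamma v)-p(\gamma u)=p(v)-p(u)$, so the defining relation (\ref{factor}) forces $s_{\gamma u,\gamma v}=s_{uv}$. Thus $s$ descends to a finite system on $\E/\Gamma$ and it is meaningful to speak of the stress value $s_i$ on each edge of an orbit appearing in the sums.

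Condition (\ref{Pnecessary}) then follows from the first recursion in (\ref{recursionBis}): applying it along $U\rightarrow U+\lambda$ and substituting $\nu_{U+\lambda}=\nu_U$ from (\ref{perLiftingCond}) yields $\sum_{U\rightarrow U+\lambda}s_i(q_i-p_i)^{\perp}=0$; because the quarter-turn $(\cdot)^{\perp}$ is a linear isomorphism of $\R^2$, this is equivalent to $\sum_{U\rightarrow U+\lambda}s_i(q_i-p_i)=0$, which is exactly (\ref{Pnecessary}). Condition (\ref{Cnecessary}) follows from the second recursion in the same manner: evaluating $C_{U+\lambda}=C_U-\sum_{U\rightarrow U+\lambda}s_i|q_i\ p_i|$ and substituting the second periodicity relation $C_U-C_{U+\lambda}=\nu_U\cdot\lambda$ gives $\sum_{U\rightarrow U+\lambda}s_i|q_i\ p_i|=\nu_U\cdot\lambda$.

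The single point that requires care, and which I regard as the crux of the argument, is the well-definedness of the two sums: a priori they could depend on the chosen face-path $U\rightarrow U+\lambda$. This is settled by the path-independence already established inside the proof of Proposition \ref{StoH}. Any two such paths differ by a closed face-cycle, and the face-cycle identities (\ref{Fcycles}), which hold precisely because $s$ is an equilibrium stress, guarantee that neither sum changes. Hence the conditions depend only on $U$ and $\lambda$ (indeed only on the orbit of $U$), which completes the verification.
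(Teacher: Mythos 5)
Your proposal is correct and follows essentially the same route as the paper: specialize the recursions (\ref{recursionBis}) to a face-path $U\rightarrow U+\lambda$ and substitute the periodicity constraints (\ref{perLiftingCond}) to read off (\ref{Pnecessary}) and (\ref{Cnecessary}). The two points you single out for care --- the $\Gamma$-invariance of $s$ and path-independence of the sums --- are left implicit in the paper but are handled exactly as you do (indeed, path-independence is automatic here, since each sum equals a difference of lifting data of the given $H$ no matter which path is chosen).
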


\paragraph{\bf From constrained stress to periodic lifting.}
Our next goal is to show that conditions (\ref{Pnecessary}) on a $\Gamma$-invariant equilibrium stress are sufficient for determining a periodic lifting. Obviously, relations (\ref{Cnecessary}) will serve for identifying the lifting data $H\equiv (\nu_U,C_U)_{U\in\F}$. 

\begin{prop}\label{StoPL}
Let $s=(s_e)_{e\in \E}$ be a $\Gamma$-invariant equilibrium stress for the planar non-crossing periodic
framework $(G,\Gamma,p,\pi)$. If, for some face $U_0$ and generators $\lambda_1, \lambda_2$ of the period lattice $\Lambda=\pi(\Gamma)$, $s$ satisfies the additional conditions that:

\begin{equation}\label{PLcond}
 \sum_{U_0\rightarrow U_0+\lambda_j} s_i(q_i-p_i) =0, \ \ \ j=1,2  
\end{equation}

\noindent
then the lifting $H\equiv (\nu_U,C_U)$ defined, for all $\lambda\in \Lambda$,  by:

$$ \nu_U\cdot \lambda =\sum_{U\rightarrow U+\lambda} s_i|q_i\ p_i|
=C_U-C_{U+\lambda} $$

\noindent
is a periodic lifting inducing $s$, determined up to a choice of constant $C_0=C_{U_0}$.
\end{prop}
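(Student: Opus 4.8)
The plan is to build the periodic lifting on top of the lifting already furnished by Proposition \ref{StoH}, and then to spend the two hypotheses in (\ref{PLcond}) pinning down the residual affine freedom so that the result is $\Gamma$-invariant. First I would invoke Proposition \ref{StoH}: since $s$ is an equilibrium stress, the face-cycle identities (\ref{Fcycles}) hold for every simple topological loop in the plane, so the path sums in (\ref{recursionBis}) are path-independent and define a lifting $H\equiv(\nu_U,C_U)$ on the whole plane, unique once a base value $(\nu_0,C_0)=(\nu_{U_0},C_{U_0})$ at $U_0$ is fixed. Writing $\psi(U):=\sum_{U_0\rightarrow U}s_i(q_i-p_i)$ and $\chi(U):=\sum_{U_0\rightarrow U}s_i|q_i\ p_i|$, we get $\nu_U=\nu_0+\psi(U)^{\perp}$ and $C_U=C_0-\chi(U)$, and the only freedom is the choice of $(\nu_0,C_0)$, i.e. the linear and constant parts of the global affine function.

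Next I would show that the linear holonomy $\Phi(U,\lambda):=\sum_{U\rightarrow U+\lambda}s_i(q_i-p_i)=\psi(U+\lambda)-\psi(U)$ is independent of $U$ and additive in $\lambda$. The key observation is that translating a face-path $U_0\rightarrow U$ by $\lambda$ leaves every edge-vector $q_i-p_i$ unchanged and, by $\Gamma$-invariance, every stress $s_i$ unchanged; comparing the translated path with the original yields $\psi(U+\lambda)=\psi(U)+c(\lambda)$ with $c(\lambda):=\psi(U_0+\lambda)$. Hence $\Phi(U,\lambda)=c(\lambda)$ for all faces $U$, and a second application of the same relation shows $c$ is a homomorphism $\Lambda\rightarrow\R^2$. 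Condition (\ref{PLcond}) says precisely that $c$ vanishes on the generators $\lambda_1,\lambda_2$, so $c\equiv 0$; this is condition (\ref{Pnecessary}) for every face and every period, and it gives $\nu_{U+\lambda}=\nu_0+\psi(U+\lambda)^{\perp}=\nu_U$. The normals are thus already periodic, independently of $\nu_0$, and moreover $\psi$ itself becomes $\Gamma$-periodic.

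Finally I would treat the constant holonomy $D(U,\lambda):=\sum_{U\rightarrow U+\lambda}s_i|q_i\ p_i|=C_U-C_{U+\lambda}$ and arrange the periodic-lifting condition (\ref{perLiftingCond}), namely $D(U,\lambda)=\nu_U\cdot\lambda$. The computational heart is the determinant identity $|q_i+\lambda\ \ p_i+\lambda|=|q_i\ p_i|+(q_i-p_i)^{\perp}\cdot\lambda$, which records how $|q_i\ p_i|$ changes under translation; summing it along a path $U_0\rightarrow U$ against its $\lambda$-translate gives $D(U,\lambda)=\psi(U)^{\perp}\cdot\lambda+d(\lambda)$ with $d(\lambda):=D(U_0,\lambda)$, and (using $c\equiv 0$, so that $\psi(U_0+\mu)=0$) shows that $d:\Lambda\rightarrow\R$ is a homomorphism. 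Since $\nu_U\cdot\lambda=\nu_0\cdot\lambda+\psi(U)^{\perp}\cdot\lambda$, the discrepancy $D(U,\lambda)-\nu_U\cdot\lambda=d(\lambda)-\nu_0\cdot\lambda$ is independent of $U$; and because $\lambda_1,\lambda_2$ form a basis of $\Lambda$, there is a unique $\nu_0\in\R^2$ with $\nu_0\cdot\lambda_j=d(\lambda_j)$ for $j=1,2$. Choosing this $\nu_0$ makes the homomorphisms $\lambda\mapsto d(\lambda)$ and $\lambda\mapsto\nu_0\cdot\lambda$ agree on generators, hence everywhere, so $D(U,\lambda)=\nu_U\cdot\lambda$ holds for all $U$ and $\lambda$: the lifting is periodic and induces $s$, with only the constant $C_0=C_{U_0}$ left free.

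I expect the main obstacle to be the bookkeeping in the last step: recognizing that both holonomies collapse to homomorphisms on $\Lambda$, so that the finitely many conditions of (\ref{PLcond}) imposed at the single base face $U_0$ propagate to all faces and all periods, and in particular isolating the determinant identity that couples the $C$-holonomy to the $\nu$-holonomy and thereby converts the required periodicity of the heights into two linear equations that fix $\nu_0$.
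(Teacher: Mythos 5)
Your proposal is correct and follows essentially the same route as the paper's proof: build the lifting from Proposition \ref{StoH}, propagate condition (\ref{PLcond}) from $U_0$ and the two generators to all faces and all periods via translation-invariance of edge vectors and $\Gamma$-invariance of the stress, establish that the $C$-holonomy depends on the face only through $(\nu_U-\nu_0)\cdot\lambda$, and finally fix $\nu_0$ by $\nu_0\cdot\lambda_j=D(U_0,\lambda_j)$ — exactly the paper's choice $\nu_0\cdot\lambda={\cal N}_{U_0}(\lambda)$. Your holonomy maps $c$ and $d$ and the determinant identity are just a streamlined packaging of the paper's computations (\ref{indep})--(\ref{Nrelation}).
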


\begin{proof}
Let us {\em assume} that $s=(s_i)_i$ is a $\Gamma$-invariant equilibrium stress which satisfies (\ref{PLcond}) for some face $U_0$ and two generators $\lambda_1, \lambda_2$ of the periodicity lattice $\Lambda=\pi(\Gamma)$. We first show that our assumption implies that condition (\ref{Pnecessary}) is satisfied for all $U$ and $\lambda\in \Lambda$. It is clearly satisfied for $U_0$ and all $\lambda\in \Lambda$ since the edge vectors implicated in the sum are, by periodicity, the same linear
combination with integer coefficients as given in $\lambda=n_1\lambda_1+n_2\lambda_2$.

\medskip \noindent
Suppose next that $V$ is adjacent to $U_0$ and consider the
face-cycle $U_0\rightarrow V\rightarrow V+\lambda \rightarrow U_0+\lambda \rightarrow U_0$. Again, by periodicity
in the framework, the edge vector implicated for $U_0\rightarrow V$ is the opposite of
the edge vector implicated for $V+\lambda\rightarrow U_0+\lambda$ and (\ref{Pnecessary})
holds for $V$. This is enough for our claim since $G$ and $G^*$ are connected.  

\medskip \noindent
As shown in the proof of Prop.~\ref{StoH}, for any initial choice $(\nu_{U_0},C_{U_0})=(\nu_0,C_0)$, we obtain a lifting $H\equiv (\nu_U,C_U)$ inducing $s$.
We aim at finding a periodic representative in this equivalence class.

\medskip \noindent
Let $H\equiv (\nu_U,C_U)$ be the lifting under investigation. Relations (\ref{recursionBis})
will hold. Thus $\nu_U$ are constant on $\Gamma$ orbits in $\F$. We show now that the function

$$ {\cal N}_U(\lambda)=\sum_{U\rightarrow U+\lambda} s_i|q_i\ p_i| $$

\noindent
is independent of the orbit representative $U$. Indeed, using (\ref{Pnecessary}) we find

$$ \sum_{(U+\mu)\rightarrow (U+\mu)+\lambda} s_i|q_i+\mu\ p_i+\mu|= $$

\begin{equation}\label{indep}
=\sum_{U\rightarrow U+\lambda} s_i|q_i\ p_i| +
\sum_{U\rightarrow U+\lambda} s_i|q_i- p_i\ \mu| =
\sum_{U\rightarrow U+\lambda} s_i|q_i\ p_i|
\end{equation}

\medskip \noindent
Moreover, ${\cal N}_U(\lambda)$ is linear in $\lambda$ and by (\ref{recursionBis}):

\begin{equation}\label{goodN}
{\cal N}_U(\lambda)=C_U-C_{U+\lambda}
\end{equation}

\medskip \noindent
By comparing $C_V-C_U$ and $C_{V+\lambda}-C_{U+\lambda}$ based on (\ref{recursionBis}),
we obtain

$$ C_{V+\lambda}-C_{U+\lambda}=-\sum_{U\rightarrow V} s_i|q_i+\lambda\ p_i+\lambda|=
C_V-C_U -\sum_{U\rightarrow V} s_i|q_i-p_i\ \lambda|= $$

$$ =C_V-C_U-\sum_{U\rightarrow V} s_i(q_i-p_i)^{\perp}\cdot \lambda=
C_V-C_U-(\nu_V-\nu_U)\cdot \lambda $$

\medskip \noindent
This gives

$$ {\cal N}_V(\lambda)=C_V-C_{V+\lambda}=C_U-C_{U+\lambda} +(\nu_V-\nu_U)\cdot \lambda=
{\cal N}_U(\lambda)+(\nu_V-\nu_U)\cdot \lambda $$

\noindent
and shows that

\begin{equation}\label{Nrelation}
 {\cal N}_V(\lambda) -  {\cal N}_U(\lambda)=(\nu_V-\nu_U)\cdot \lambda
\end{equation}

\medskip \noindent
This means that if we choose our initial $\nu_0=\nu_{U_0}$ by $\nu_0\cdot \lambda={\cal N}_{U_0}(\lambda)$, for all $\lambda \in \Lambda$, the resulting lifting will be periodic.
\end{proof}

\medskip \noindent
In the next section we prove that this type of constrained stress is precisely the periodic stress implicated in the deformation theory of periodic frameworks introduced in \cite{BS2}. 


\section{Periodic deformations and stresses}
\label{sec:deformationPeriodicStress}

We have arrived at one of the most important aspects of this paper, which brings in the connection with the infinitesimal rigidity or flexibility of a periodic framework. The application to periodic pseudo-triangulations and expansive mechanisms presented in the next section relies on this correspondence.

\paragraph{\bf Periodic deformations.} A planar framework $(G,\Gamma,p,\pi)$ was defined by a placement of vertices $p: \V\rightarrow \R^2$ and a faithful representation $\pi: \Gamma \rightarrow {\cal T}(\R^2)$  of the periodicity group by a rank two lattice of translations $\Lambda=\pi(\Gamma)$, with the necessary compatibility relation. In the framework, the edges of the graph are now seen as segments of fixed length, forming what is called in rigidity theory a {\em bar-and-joint structure.} 
According to our periodic deformation theory, introduced in \cite{BS2} and pursued in \cite{BS3,BS4}, a {\em periodic bar-and-joint framework} is said to be {\em periodically flexible} if there exists a continuous (or equivalently smooth) family of placements, parametrized by time $t$, \  $p_t:\V\rightarrow \R^2$ with $p_0=p$, which is not given by global rigid motions and satisfies two conditions: (a) it maintain the lengths of all the edges $e\in \E$, and (b) it maintains periodicity under $\Gamma$, via faithful representations $\pi_t:\Gamma \rightarrow {\cal T}(\R^2)$ which {\em may change with $t$ and give a corresponding variation of the periodicity lattice} $\Lambda_t=\pi_t(\Gamma)$.

\medskip \noindent
To represent $\pi_t$ we first choose two generators for the periodicity lattice $\Gamma$. The corresponding lattice generators $\lambda_1(t)$ and $\lambda_2(t)$ at time $t$ may be viewed as the columns of a non-singular $2\times2$ matrix denoted, for simplicity, with the same symbol $\Lambda_t\in GL(2)$. The infinitesimal deformations of the placement $(p_t,\pi_t)$ are described using a complete set of $n$ vertex representatives for $\V/\Gamma$, i.e. the vertex positions are parametrized by $(\R^2)^n$. The $m$ representatives for  edges mod $\Gamma$ are then expressed using the vertex parameters and the periodicity matrix $\Lambda$. An edge representative $\beta$ originates in one of the chosen vertex representatives $i=i(\beta)$ and ends at some other vertex representative $j=j(\beta)$ {\em plus some period} $\Lambda c_{\beta}$, where $c_{\beta}$ is a column vector with two integer entries. The edge vectors $e_{\beta}$, $\beta\in \E/\Gamma$ thus have the form:

\begin{equation}\label{eq:Edescription}
 e_{\beta}=(x_j+\Lambda c_{\beta})-x_i, \ \ \ \beta\in \E/\Gamma 
\end{equation} 

\medskip \noindent
By taking the squared length of the $m$ edge representatives, we obtain a map:

\begin{equation}\label{map}
 (\R^2)^n\times GL(2) \rightarrow \R^m 
\end{equation}

\medskip \noindent
and the differential of this map at the point under consideration (i.e. the point of $(\R^2)^n\times GL(2)\subset \R^{2n+4}$ corresponding to the framework $(p,\pi)$), seen as a matrix with
$m$ rows and $2n+4$ columns is called the {\em rigidity matrix} ${\bf R}={\bf R}(G,\Gamma,p,\pi)$ of the framework. The row corresponding to the edge described above has the form:

\begin{equation}\label{row}
( 0 ... 0 \ -e_{\beta}^t\  0 ...0\  e_{\beta}^t\  0 ...0 \ c_{\beta}^1 e_{\beta}^t\  c_{\beta}^2  e_{\beta}^t)
\end{equation}

\noindent
where $ e_{\beta}^t$ is the transpose of the column edge vector $e_{\beta}$ and an obvious 
grouping convention is used for the columns.

\medskip \noindent
The vector space of {\em infinitesimal periodic motions} (or {\em infinitesimal periodic deformations}) of the given framework 
$(G,\Gamma,p,\pi)$ can be described as the {\em kernel} of the rigidity matrix ${\bf R}$ and the vector space of
{\em periodic stresses} can be described as the {\em kernel} of the transpose ${\bf R}^t$. It is understood that a stress described on the $m$ representatives for $\E/\Gamma$ is extended by periodicity to all edges.

\medskip \noindent
Thus, non-trivial periodic stresses express linear dependences between the rows of the rigidity matrix ${\bf R}$. Grouping these dependences over groups of columns corresponding to vertex representatives, we obtain immediately the fact that a periodic stress is necessarily a $\Gamma$-invariant equilibrium stress as defined earlier in this paper. However, there are two {\em additional} vector conditions imposed by the columns corresponding to the infinitesimal variation of the periods.

\medskip \noindent
This sets the stage for a comparison of the periodic stresses reviewed here and the 
stresses induced by periodic liftings. It is useful to restate now the definition.

\begin{definition}\label{Pstress}
A {\em periodic stress} for the framework $(G,\Gamma,p,\pi)$ is a 
stress induced from an element in the kernel of the transposed rigidity matrix ${\bf R}^t$, that is, a $\Gamma$-invariant equilibrium stress $s$ satisfying the conditions

\begin{equation}\label{PScond}
 \sum_{\beta\in \E/\Gamma} s_{\beta} c_{\beta}^j e_{\beta}=0\ \ \ j=1,2
\end{equation}

\noindent
with integer coefficients $c_{\beta}^j$ as given in the edge description (\ref{eq:Edescription}).
\end{definition}

\medskip \noindent
{\bf Remarks.} \ The fact that periodic stresses do not depend on the choices of representatives 
used in expressing the rigidity matrix follows from the fact that the image of the map
(\ref{map}) and the image of its differential do not depend on these choices. Periodic stresses
give the orthogonal complement in $\R^m$ for the image of the differential.

There is an  equivalent form for the periodic stress conditions (\ref{PScond}), which refers directly to the
periods $\lambda_{\beta}=\Lambda c_{\beta}$ implicated in the edge descriptions (\ref{eq:Edescription}).
It can be given in terms of tensor products, namely:

\begin{equation}\label{tensor}
 \sum_{\beta\in E/\Gamma} s_{\beta} \lambda_{\beta}\otimes e_{\beta} =0 
\end{equation}

\noindent
The equivalence follows from a straightforward linear algebra verification.

\medskip \noindent
For a comparison of conditions (\ref{PLcond}) and (\ref{PScond}), we verify first the {\em persistence of periodic stresses under relaxation of periodicity} from $\Gamma$ to a subgroup of {\em finite index} $\tilde{\Gamma}\subset \Gamma$.

\begin{prop}\label{prop:persistence}
Let $s=(s_{\beta})_{\beta\in \E}$ be a periodic stress for the periodic framework $(G,\Gamma,p,\pi)$.
Let $\tilde{\Gamma}\subset \Gamma$ be a subgroup of finite index.
Then $s$ remains a periodic stress for the framework with relaxed periodicity $(G,\tilde{\Gamma},p,\pi|_{\tilde{\Gamma}})$.
Moreover, if a $\Gamma$-invariant equilibrium stress becomes periodic for a relaxed periodicity
$\tilde{\Gamma}\subset \Gamma$, it is already periodic for $\Gamma$.

\end{prop}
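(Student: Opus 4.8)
The plan is to argue entirely through the intrinsic, basis-free form (\ref{tensor}) of the periodic-stress condition, because the equivalent component form (\ref{PScond}) is tied to a choice of lattice generators, and a relaxation $\tilde\Gamma\subset\Gamma$ of index $N=[\Gamma:\tilde\Gamma]$ replaces those generators by an integer combination of determinant $\pm N$, which would only clutter the computation. First I would record the easy structural facts that dispose of everything except the two extra vector conditions of Definition \ref{Pstress}: a $\Gamma$-invariant equilibrium stress $s$ is automatically $\tilde\Gamma$-invariant since $\tilde\Gamma\subset\Gamma$, and the per-vertex equilibrium condition (\ref{Scondition}) does not see the periodicity group at all. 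Fixing coset representatives $g_1=e,g_2,\dots,g_N$ of $\Gamma/\tilde\Gamma$, I would build vertex- and edge-representatives for $G/\tilde\Gamma$ as $\{g_k v\}$ and $\{g_k\beta\}$ out of those for $G/\Gamma$; because $\Gamma$ acts freely, each $\Gamma$-orbit splits into exactly $N$ $\tilde\Gamma$-orbits, so these are complete sets of representatives.

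The heart of the matter is then a single identity: the $\tilde\Gamma$-tensor $T:=\sum_{k,\beta}s_{g_k\beta}\,\tilde\lambda_{g_k\beta}\otimes e_{g_k\beta}$ equals $N$ times the $\Gamma$-tensor $\Sigma:=\sum_{\beta}s_\beta\,\lambda_\beta\otimes e_\beta$. To establish it I would express the $\tilde\Gamma$-data of $g_k\beta$ through the $\Gamma$-data of $\beta$. Edge vectors are translation invariant, so $e_{g_k\beta}=e_\beta$, and $s_{g_k\beta}=s_\beta$ by invariance. Writing $\beta$ from its tail representative to the head representative plus the period $\lambda_\beta=\pi(\gamma_\beta)$ as in (\ref{eq:Edescription}) (with $\gamma_\beta\in\Gamma$ the associated group element), the relaxed period of $g_k\beta$ comes out as $\tilde\lambda_{g_k\beta}=\pi(g_k)+\lambda_\beta-\pi(g_{l(k,\beta)})$, where $l(k,\beta)$ is the unique coset index with $g_k\gamma_\beta\in g_{l(k,\beta)}\tilde\Gamma$. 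Substituting and splitting $T$ into three sums, the $\lambda_\beta$-term summed over $k$ gives $N\Sigma$, while the two remaining sums are $\bigl(\sum_k\pi(g_k)\bigr)\otimes\bigl(\sum_\beta s_\beta e_\beta\bigr)$ and $\sum_\beta s_\beta\bigl(\sum_k\pi(g_{l(k,\beta)})\bigr)\otimes e_\beta$.

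The crux, and the one genuinely substantive point, is that for each fixed $\beta$ the map $k\mapsto l(k,\beta)$ is a permutation of $\{1,\dots,N\}$: since $\Gamma$ is abelian, translation by the fixed class of $\gamma_\beta$ is a bijection of the finite group $\Gamma/\tilde\Gamma$, whence $\sum_k\pi(g_{l(k,\beta)})=\sum_l\pi(g_l)$ is independent of $\beta$. The two remaining sums therefore coincide and cancel, leaving $T=N\Sigma$. Both assertions of the proposition then follow at once: if $\Sigma=0$ the stress is $\Gamma$-periodic, so $T=0$ and $s$ stays periodic for $\tilde\Gamma$; and since $N\neq0$, $T=0$ forces $\Sigma=0$, which is the converse. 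I expect no real obstacle beyond keeping the bookkeeping of representatives straight; the permutation property is precisely where the finite-index abelian structure of the relaxation is used, and it is also the reason this is transparent in the tensor form rather than through the generator-dependent coefficients $c_\beta^j$.
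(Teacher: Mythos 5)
Your proposal is correct and follows essentially the same route as the paper: both work with the tensor form of the periodic-stress condition, build $\tilde\Gamma$-representatives by translating $\Gamma$-representatives with coset representatives of $\Gamma/\tilde\Gamma$, and establish that the relaxed stress tensor equals the index $N=\rho$ times the original one, with the cancellation resting on exactly the same permutation-of-cosets observation (the paper's identity (\ref{permut})). The only difference is cosmetic bookkeeping — you write the relaxed period in one closed formula $\pi(g_k)+\lambda_\beta-\pi(g_{l(k,\beta)})$, whereas the paper splits it through two auxiliary decompositions (\ref{unique}) and (\ref{hat}) — and both derive the equivalence (hence the "moreover" direction) from the same scaling identity.
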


\medskip \noindent
\begin{proof} A $\Gamma$-invariant equilibrium stress is obviously $\tilde{\Gamma}$-invariant.
We look at the columns related to periods in the two rigidity matrices. 

\medskip \noindent
Let $\rho=card(\Gamma/\tilde{\Gamma})$ denote the index of relaxation of periodicity.
For the relaxed case we use the following representatives. We  first choose representatives in $\Gamma$ for the elements of the quotient group $\Gamma/\tilde{\Gamma}$. Then, a complete set of vertex representatives mod $\tilde{\Gamma}$ will consist of the representatives chosen
mod $\Gamma$ translated by the periods corresponding to the representatives chosen for
$\Gamma/\tilde{\Gamma}$. The subscripts for the relaxed case take the form of pairs
$i,\gamma$, with the $\rho$ representatives $\gamma$ covering $\Gamma/\tilde{\Gamma}$.

\medskip \noindent
The proof amounts to verifying identities which can be better controlled when using a 
tensor product expression for the stress condition on the periodicity columns as in 
\cite{BS2}, page 2641 and as recalled above in (\ref{tensor}). Altogether, with representatives for periodicity $\Gamma$ given in coordinates as

$$ x_i,\ \ i=1,...,n $$

$$ e_{\beta}=x_j+\mu_{\beta}- x_i, \ \ i=i(\beta), j=j(\beta), \mu_{\beta}\in \Lambda $$

\noindent
we have representatives for periodicity $\tilde{\Gamma}$ given in coordinates as

$$ x_{i,\gamma}=x_i+\gamma $$

$$ e_{\beta,\gamma}=(x_j+\gamma)+\mu_{\beta}-(x_i+\gamma)=e_{\beta} $$

\noindent
where, for simplicity, no  further notational distinction has been made for $\gamma\in \Gamma$ and its image by $\pi$. Now, we must take into account that we have unique expressions

\begin{equation}\label{unique}
 \mu_{\beta}=\gamma_{\beta} +\tilde{\lambda}_{\beta} 
\end{equation}

\noindent
with $\gamma_{\beta}$ among the chosen representatives for $\Gamma/\tilde{\Gamma}$
and $\tilde{\gamma}\in \tilde{\Gamma}$. Moreover, as we add representatives, we must record
relations of the form

\begin{equation}\label{hat}
 \gamma + \gamma_{\beta}=\lambda_{\beta,\gamma}+\hat{\lambda}_{\beta,\gamma} 
\end{equation}

\noindent
with $\lambda_{\beta,\gamma}$ among the chosen representatives for $\Gamma/\tilde{\Gamma}$ and $\hat{\lambda}_{\beta,\gamma}\in \tilde{\Gamma}$.

\medskip \noindent
The periodic stress condition for $\Gamma$ reads

\begin{equation}\label{PSfull}
 \sum_{\beta} s_{\beta} \mu_{\beta}\otimes e_{\beta}=0 
\end{equation}

\medskip \noindent
and the one for $\tilde{\Gamma}$ reads

\begin{equation}\label{PSrelax}
 \sum_{\beta,\gamma} s_{\beta} (\tilde{\lambda}_{\beta}+\hat{\lambda}_{\beta,\gamma}) \otimes e_{\beta} =0. 
\end{equation}

\medskip \noindent
In order to see that these conditions are equivalent, we notice that (\ref{unique}) gives

\begin{equation}\label{first}
\sum_{\beta,\gamma} s_{\beta} \tilde{\lambda}_{\beta}\otimes e_{\beta}=
\rho \sum_{\beta} s_{\beta} \mu_{\beta}\otimes e_{\beta}
-\rho \sum_{\beta} s_{\beta} \gamma_{\beta}\otimes e_{\beta} 
\end{equation}

\medskip \noindent
Similarly, from (\ref{hat}), we obtain

\begin{equation}\label{second}
\sum_{\beta,\gamma} s_{\beta} \hat{\lambda}_{\beta,\gamma} \otimes  e_{\beta}=
\sum_{\beta,\gamma} s_{\beta} (\gamma+\gamma_{\beta}-\lambda_{\beta,\gamma})\otimes e_{\beta}.
\end{equation}

\medskip \noindent
Now, we take into account the identity:

\begin{equation}\label{permut}
\sum_{\beta,\gamma} s_{\beta} (\gamma -\lambda_{\beta,\gamma})\otimes e_{\beta} =0
\end{equation}

\noindent
which follows from the fact that when operating with an element of a group, one obtains a permutation of the elements in the group. Here, for any $\beta$, the two lists 
the two lists $\gamma$ and $\lambda_{\beta,\gamma}$ of representatives for $\Gamma/\tilde{\Gamma}$ are 
made of the same elements.

\medskip \noindent
Thus, equations (\ref{first}), (\ref{second}) and (\ref{permut})  imply 

\begin{equation}\label{PSrelaxBis}
\sum_{\beta,\gamma} s_{\beta} (\tilde{\lambda}_{\beta}+\hat{\lambda}_{\beta,\gamma}) \otimes e_{\beta}  =\rho \sum_{\beta} s_{\beta} \mu_{\beta}\otimes e_{\beta}
\end{equation}

\noindent
and establish the equivalence of 
(\ref{PSfull}) and (\ref{PSrelax}). Our proposition is proven.
\end{proof}

\medskip \noindent
{\bf Comment.}\ Tedious as it may be, this verification has the important consequence that
upon relaxation of periodicity, {\em the dimension of the space of periodic stresses can only go up or stay the same.}

\medskip \noindent
We recall (from \cite{BS2}, page 2641) the relation 

\begin{equation}\label{PSm}
\sigma -\delta=m-2n-4
\end{equation}

\noindent
connecting periodic stresses and infinitesimal deformations, where $\sigma$ denotes the dimension of the space of periodic stresses and $\delta$ is the dimension of the space of infinitesimal periodic deformations. Subtracting the trivial infinitesimal deformations induced by infinitesimal isometries, we obtain:
\begin{equation}\label{eq:PSf}
\sigma =\phi -1 +(m-2n)
\end{equation}

\noindent
where $\phi$ denotes the dimension of the space of infinitesimal flexes $\phi=\delta-3$. This formula will be relevant for evaluating behavior under relaxations, with $\sigma$ and  $\phi$ non-decreasing and the term $(m-2n)$ multiplied by the index of relaxation $\rho$.

\medskip \noindent
The last ingredient needed for the proof of our Main Theorem is Lemma \ref{lem:largeCell} below, which can be summarized as saying that {\em for sufficiently relaxed periodicity $\tilde{\Gamma}$, a complete set of edge representatives can be found that are either inside a fundamental parallelogram $P_{\tilde{\Gamma}}$ or cross the border with an endpoint in a neighboring parallelogram, intersecting one of two prescribed spanning sides.} The Lemma gives a more transparent interpretation for the conditions (\ref{PScond}) satisfied by periodic stresses, in terms of a sufficiently large relaxation of periodicity. 

\begin{lem}\label{lem:largeCell}
Let $(G,\Gamma,p,\pi)$ be a planar non-crossing periodic framework.  One can find  generators $\lambda_1,\lambda_2$ for the lattice of periods $\Lambda=\pi(\Gamma)$ and large enough positive integers $r_1$ and $r_2$ such that the relaxation of periodicity given by the sublattice $\tilde{\Gamma}$ of index $\rho=r_1r_2$ of \ \ $\Gamma$ \ corresponding to the two generators $\tilde{\lambda}_j=r_j\lambda_j$, $(j=1,2)$ allows the  following setup:

\medskip
(a)\ a choice of \ fundamental parallelogram $P_{\Gamma}$ spanned by  $\lambda_1, \lambda_2$ such that its boundary avoids vertices and its vertices avoid the framework;

\medskip
(b)\ a representation of the associated fundamental parallelogram $P_{\tilde{\Gamma}}$ for $\tilde{\Gamma}$, spanned
by $\tilde{\lambda}_j=r_j\lambda_j$, $(j=1,2)$ as tiled by $\rho=r_1r_2$ translated copies of
the previous parallelogram;

\medskip
(c)\ using as representatives of vertex-orbits for $\Gamma$ all the vertices inside the fundamental parallelogram $P_{\Gamma}$ and then as representatives of vertex-orbits for $\tilde{\Gamma}$ all the vertices inside the fundamental parallelogram $P_{\tilde{\Gamma}}$, a description of
edge-orbits as used for the two rigidity matrices (for $\Gamma$ and the relaxation $\tilde{\Gamma}$) involving, for $\tilde{\Gamma}$, all edges inside the parallelogram in (b)
and those crossing the two spanning sides given by $\tilde{\lambda}_j, j=1,2$;

\medskip
(d)\ the $\tilde{\Lambda}$ periods implicated in the description of the above crossing edges
are either $-\tilde{\lambda}_1$ or $-\tilde{\lambda}_2$ or possibly their sum 
$-( \tilde{\lambda}_1+\tilde{\lambda}_2)$ .
\end{lem}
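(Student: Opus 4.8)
The plan is to exploit two consequences of the finiteness of the quotient $G/\Gamma$: the edge representatives have bounded length and realize only finitely many directions. The first fact will let a sufficiently large relaxation turn every edge into a ``short'' segment relative to the cell $P_{\tilde\Gamma}$, forcing the period coefficients $c_\beta^j$ of (\ref{eq:Edescription}) into $\{-1,0,1\}$; the second will let me pre-select the generators $\lambda_1,\lambda_2$ so that the troublesome ``anti-diagonal'' period $\pm(\tilde\lambda_1-\tilde\lambda_2)$ never arises. I expect this last point to be the only real obstacle; everything else is bookkeeping. This is a plan rather than a full argument, so I suppress the routine estimates.

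First I would dispatch (a) and (b). Since the placement is locally finite and $\Lambda$-periodic, a fundamental parallelogram $P_\Gamma$ contains only finitely many vertices and finitely many edge-segments, so the conditions ``$\partial P_\Gamma$ meets no vertex'' and ``the four corners of $P_\Gamma$ lie on no edge'' each exclude a lower-dimensional set of translations; a generic translate of $P_\Gamma$ therefore satisfies (a). Part (b) is immediate: with $\tilde\lambda_j=r_j\lambda_j$ the parallelogram $P_{\tilde\Gamma}$ is tiled by the $\rho=r_1r_2$ translates $P_\Gamma+i\lambda_1+j\lambda_2$, $0\le i<r_1$, $0\le j<r_2$, and since the grid lines bounding these cells are $\Lambda$-translates of $\partial P_\Gamma$, the boundary of $P_{\tilde\Gamma}$ again avoids all vertices.

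For (c) and (d) I would take $P_{\tilde\Gamma}$ half-open, use as vertex representatives exactly the vertices it contains, and orient each edge before reading off its period. Writing an edge vector in the chosen basis as $e=\epsilon_1\lambda_1+\epsilon_2\lambda_2$ with $|\epsilon_j|\le E$ (finitely many edge orbits), choosing $r_j>E$ makes the $\tilde\Gamma$-coordinates $\epsilon_j/r_j$ of $e$ have absolute value $<1$; together with both representative endpoints lying in $[0,1)^2$ in $\tilde\Gamma$-coordinates, this bounds each $c_\beta^j$ in absolute value by $1$. Thus every period is already of the form $c_\beta^1\tilde\lambda_1+c_\beta^2\tilde\lambda_2$ with $c_\beta^j\in\{-1,0,1\}$: interior edges ($c_\beta=0$) and edges crossing one or two of the cell's spanning sides. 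It remains only to control the signs.

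The one pattern not removable by re-orienting a single edge is the anti-diagonal $c_\beta=\pm(1,-1)$, because flipping an edge negates $c_\beta$ and sends $(1,-1)$ to $(-1,1)$, neither of which lies in the target set $\{(-1,0),(0,-1),(-1,-1),(0,0)\}$. This is the heart of the matter, and I would defuse it at the very start by choosing the generators adapted to the finite set of edge directions. Since there are finitely many edge-lines, there is an open angular gap containing none of them; taking $\lambda_1$ primitive with direction inside this gap and completing it unimodularly to a basis $\{\lambda_1,\lambda_2\}$ (replacing $\lambda_2$ by $\lambda_2+k\lambda_1$ for large $|k|$ to push the angle between the generators close to $\pi$), one arranges that every edge-line lies in the closed cone spanned by $\lambda_1$ and $\lambda_2$. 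Orienting each edge so that $\epsilon_1,\epsilon_2\le 0$ then forces $c_\beta^j\le 0$, and with the size bound above $c_\beta^j\in\{-1,0\}$; hence the only periods occurring are $0$, $-\tilde\lambda_1$, $-\tilde\lambda_2$ and $-(\tilde\lambda_1+\tilde\lambda_2)$, which is exactly (c) and (d). The main obstacle, as flagged, is precisely this elimination of the anti-diagonal wrap, and the cone-adapted choice of generators is what makes it disappear; the remaining estimates are the routine ``short edge in a large cell'' bookkeeping.
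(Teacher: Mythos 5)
Your proposal is correct and takes essentially the same route as the paper: both reduce the lemma to eliminating the anti-diagonal wrap $\pm(\tilde{\lambda}_1-\tilde{\lambda}_2)$ (parts (a), (b) and the bound $c_\beta^j\in\{-1,0,1\}$ being generic-position and ``short edge in a large cell'' arguments in each case), and both kill it by choosing generators of $\Lambda$ so that every edge direction lies in the cone they span, after which edges oriented with nonpositive coordinates can only carry the periods $0$, $-\tilde{\lambda}_1$, $-\tilde{\lambda}_2$, $-(\tilde{\lambda}_1+\tilde{\lambda}_2)$. The only difference is how that cone condition is secured --- the paper applies a rational linear transformation making all edges nearly ``vertical'' and then takes a ``horizontal'' generator plus a sufficiently more slanted one, while you fix a primitive $\lambda_1$ in an angular gap free of edge directions and shear $\lambda_2\mapsto\lambda_2+k\lambda_1$ --- two equivalent realizations of the same key idea, with your unimodular completion giving a slightly more explicit justification that the chosen directions are indeed lattice generators.
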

 
 \begin{figure}[h]
 \vspace{-10px}
 \centering
  {\includegraphics[width=0.65\textwidth]{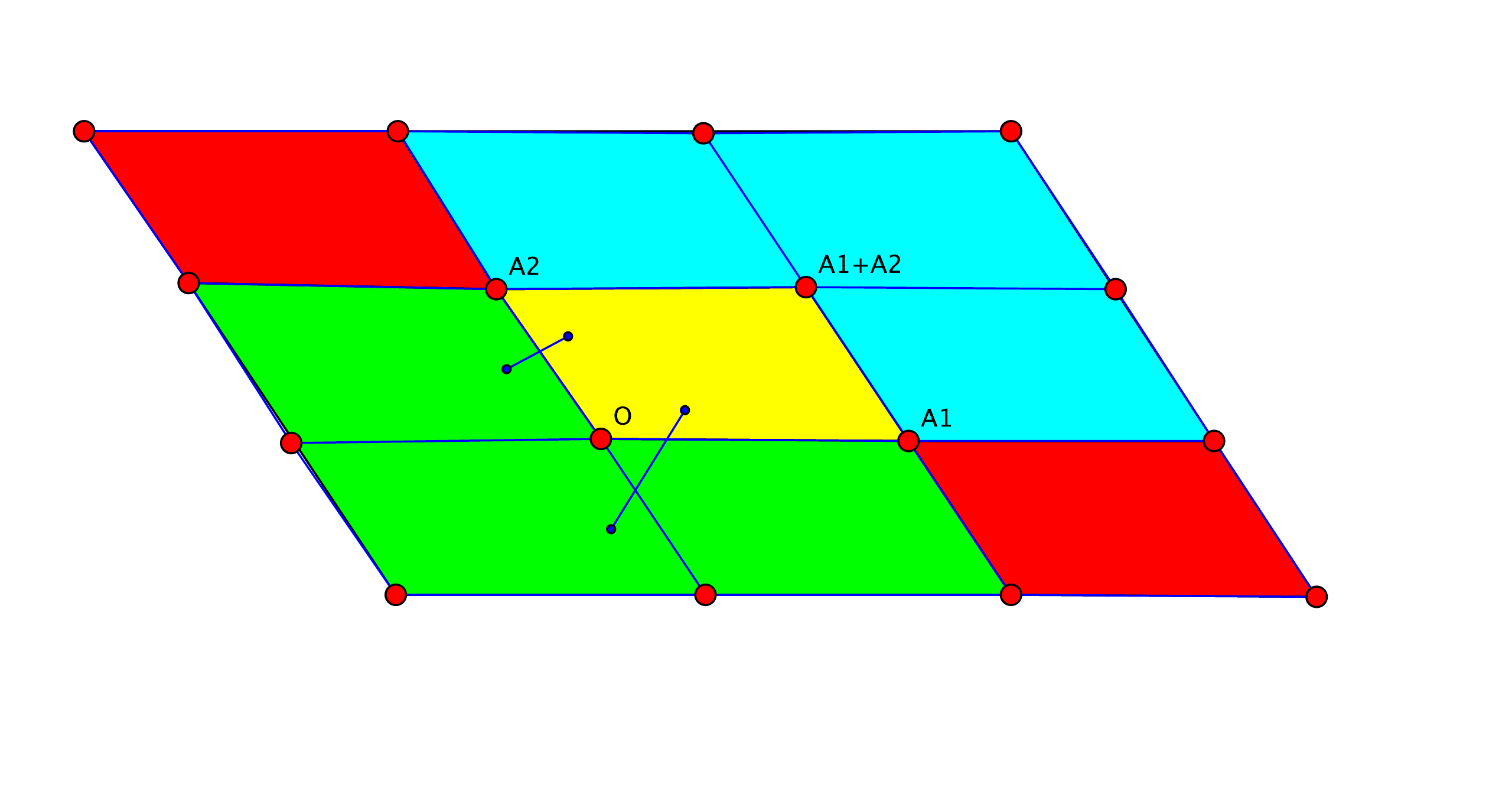}}
 \vspace{-20px}
  \caption{ After a first relaxation one obtains the yellow parallelogram as fundamental domain and
 all edges with one end in it have the other end in it or in one of the  eight neighbours. }
 \label{FigZones}
 \end{figure}
 
\medskip \noindent
{\bf Comment.}\ The intention, content and principle of proof of this lemma can be elucidated by referring to Figure~\ref{FigZones}. 
The aim is to show that convenient choices of  lattice generators and
relaxation can provide a setting where a fundamental parallelogram for the relaxed lattice $\tilde{\Lambda}$ is the central (yellow) zone, marked $OA_1(A_1+A_2)A_2$, with all representatives for edge-orbits contained
in the union of the central zone with the three neighbours (in green) around the O corner. With this corner taken as the origin, the two generators $\tilde{\lambda}_j, j=1,2$
of $\tilde{\Lambda}$ are represented by vectors $ OA_j, j=1,2$, we see that any edge in the union of the 
central zone and the blue zone around $A_1+A_2$ has, by translation with 
$-( \tilde{\lambda}_1+\tilde{\lambda}_2)$, an equivalent representative in the desired union
(of yellow and green). Thus, the lemma can be proven by showing that convenient choices lead to
a setting with no edges crossing from the central (yellow) zone to the remaining two (red) zones abutting corners $A_1$ and $A_2$. In fact, by translation with 
$\tilde{\lambda}_2-\tilde{\lambda}_1$, it is enough to insure that no edge crosses from the central parallelogram to the (red) neighbor abutting at $A_1$.

\medskip \noindent
\begin{proof}\ A generic choice of origin $O$ will easily satisfy the avoidances desired in (a) when we represent the periodicity vectors as emanating from $O$. Assuming such a choice, {\em rational} will refer to the common rational grid determined by the  initial periodicity lattice (and containing all its relaxations).

\begin{figure}[h]
\centering
 {\includegraphics[width=0.60\textwidth]{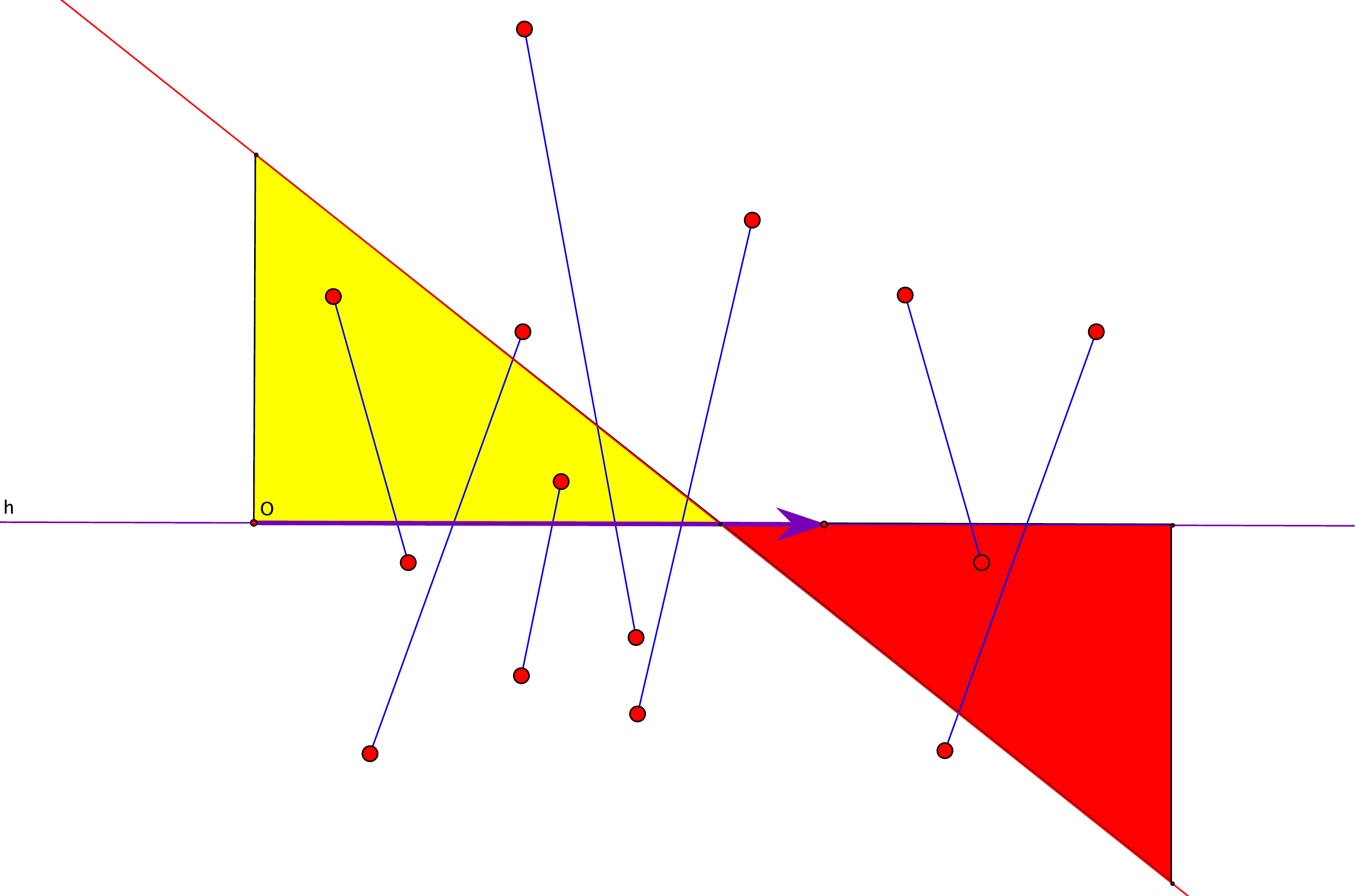}}
 \caption{ The horizontal line $h$ intersected by edges. The marked vector is a period along $h$.
Since all edges have nearly vertical directions a sufficiently more slanted line achieves the property that no edge crosses from the yellow zone to the red zone. }
 \label{FigHcrossing}
\end{figure}

\noindent
Since there is a finite number of edge vectors, their length is bounded and 
it is clear that for any initial choice of fundamental parallelogram $P_{\Gamma}$ we can find
a relaxation with all edges originating in $P_{\tilde{\Gamma}}$ ending either in itself or one of the eight neighboring translated copies. Thus, what remains to be argued is how to obtain the 
aditional property that no edge crosses from the central parallelogram $P_{\tilde{\Gamma}}=OA_1(A_1+A_2)A_2$ to the (red) neighbor abutting at $A_1$.
 
\medskip
For this purpose, we remark that our problem is not affected by a rational linear transformation
and we may use this invariance to arrange and assume that all our edge vectors are, in direction,
sufficiently close to a single direction, which we designate as our ``vertical". We look now at the line through O which is orthogonal to our vertical and refer to is as our ``horizontal" line.

\medskip
Let us depict (with their periodicity along this horizontal) all the edges crossing it, as illustrated
in Figure~\ref{FigHcrossing}.  Then, we can take a more slanted rational line with respect to the vertical as our direction for $\tilde{\lambda}_2$. Under our assumptions, this choice obtains the property that no edge crosses from the yellow region to the red region. We can find first
$\Gamma$-periods along the horizontal and slanted directions. In fact, by the relative freedom 
we have when choosing the slanted direction, we may assume that we obtain generators $\lambda_j, j=1,2$. As already explained, a relaxation $\tilde{\Gamma}$ can be found which satisfies the setting in Figure~\ref{FigZones} and the additional property that all edge representatives for $\tilde{\Gamma}$ can be found in the union of the four parallelograms around O. This completes the proof of our lemma.
\end{proof}

\medskip 
\noindent 
All the elements for the correspondence between periodic liftings and periodic stresses are
now in place to prove:

\medskip
\noindent
{\bf Main Theorem }
{\em
Let $(G,\Gamma,p,\pi)$ be a planar non-crossing periodic framework.
A stress induced by a periodic lifting is a periodic stress and conversely, any periodic stress
is induced by a periodic lifting, determined up to an arbitrary additive constant. The correspondence relates the stress signs to the mountain/valley types of the lifted edges.
}

\begin{proof}\ We use Proposition~\ref{prop:persistence}  and the setting described in Lemma~\ref{lem:largeCell} obtained after an adequate relaxation of periodicity $\tilde{\Gamma}\subset \Gamma$ with generators related by $\tilde{\lambda}_j=r_j\lambda_j, j=1,2$. We first observe that, for periodicity $\tilde{\Gamma}$ the stated correspondence between  periodic liftings and periodic stresses becomes obvious, since conditions (\ref{PScond}) and (\ref{PLcond}) ask exactly the same thing: that the stress-weighted sums of edges implicated along $U\rightarrow U+\tilde{\lambda}_j, j=1,2$ be zero. The case of full periodicity $\Gamma$ now follows from Proposition~\ref{prop:persistence} and the corresponding fact that a $\Gamma$-invariant lifting which is $\tilde{\Gamma}$-periodic for some relaxation $\tilde{\Gamma}\subset \Gamma$, must be already $\Gamma$-periodic,
as immediately seen from conditions (\ref{PLcond}). The sign relationship follows from Proposition~\ref{prop:signStress}.
\end{proof}

\noindent
This concludes the proof. We turn now to applications of our Main Theorem.

\section{Periodic pointed pseudo-triangulations}
\label{sec:periodicPPTs}

In this section we define {\em periodic pointed pseudo-triangulations} or, for short,  {\em periodic pseudo-triangulations}. We prove properties analogous  to those possessed by their finite counterparts \cite{S1,S2}, with respect to  expansive motions. In addition, we show some entirely new properties, specific to the periodic setting, related to {\em ultrarigidity}.

\begin{wrapfigure}{l}{0.35\textwidth}
\vspace{-10pt}
\centering
 {\includegraphics[width=0.35\textwidth]{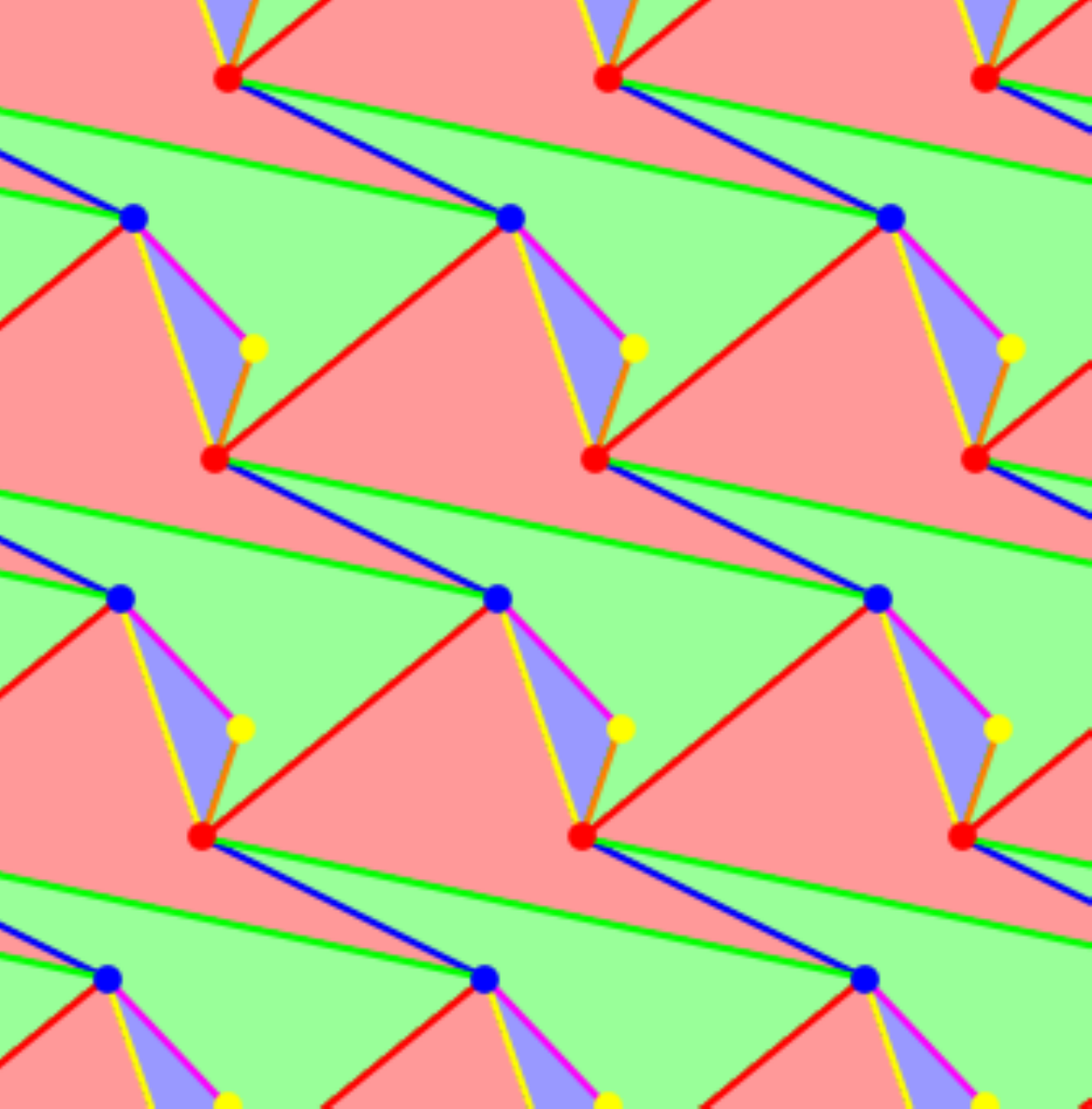}}
  \vspace{-12pt}
 \caption{\small{A periodic pseudo-triangulation with $(n,m,n^*)=(3,6,3)$. }}
   \vspace{-10pt}
 \label{FigPT3}
\end{wrapfigure}

\medskip
  A {\em pseudo-triangle} is a simple closed planar polygon with exactly three internal angles smaller than $\pi$. A set of vectors is {\em pointed} if there is no linear combination with strictly positive coefficients that sums them to $0$. Equivalently, a set of pointed vectors lie in some half-plane determined by a line through their common origin.
A planar non-crossing periodic framework $(G,\Gamma,p,\pi)$ is a {\em periodic pointed pseudo-triangulation} when all faces are pseudo-triangles and the framework is pointed at every vertex. Thus, at every vertex, the incident edges lie on one side of some line passing through the vertex. As in the finite case, pointedness at every vertex is essential. Pseudo-triangular faces mark the `saturated' stage where no more edge orbits can be inserted without violating non-crossing or pointedness. An illustration for $n=3$ is given in Figure~\ref{FigPT3}. We show that periodic pointed pseudo-triangulations, viewed as bar-and-joint mechanisms, satisfy two remarkable rigidity-theoretic properties: they have the right number of edges to be flexible mechanisms with exactly one degree of freedom (in the finite case \cite{S1,S2}, the flexible mechanisms were obtained after removing a convex hull edge), and they encounter no singularities in their deformation for as long as they remain pseudo-triangulations.
  
\begin{prop}\label{prop:mEquals2n}
A periodic pseudo-triangulation has $m=2n$, that is, the number of edge orbits
$m=card(\E/\Gamma)$ is twice the number of vertex orbits $n=card(\V/\Gamma)$.
\end{prop}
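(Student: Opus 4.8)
The plan is to derive $m=2n$ from Euler's relation $n-m+n^*=0$ already recorded in (\ref{Euler}), combined with a double count of the angular incidences (``corners'') between vertices and faces. All counts are taken over orbit representatives, i.e.\ on the finite quotient $G/\Gamma$ embedded in the torus $\R^2/\Lambda$, with $n$ vertex-orbits, $m$ edge-orbits and $n^*$ face-orbits; the angle at a given corner is genuine geometric data in $\R^2$ and is $\Gamma$-invariant, so it descends to the quotient without ambiguity.

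The first count is by vertices and uses pointedness. At a vertex $v$ of degree $d_v$, the incident edges divide the surrounding total angle $2\pi$ into exactly $d_v$ angular wedges, each wedge being the interior angle of precisely one face-corner at $v$. Strict pointedness means the incident edge-vectors lie in an open half-plane, so exactly one of these wedges exceeds $\pi$; it is unique, since two wedges each greater than $\pi$ would already overshoot $2\pi$. Hence every vertex carries exactly one reflex corner and $d_v-1$ convex corners. Summing over all vertex-orbits and using $\sum_v d_v = 2m$ (each edge-orbit has two endpoints), the total number of convex corners is $\sum_v (d_v-1) = 2m-n$.

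The second count is by faces and uses the pseudo-triangle hypothesis: a pseudo-triangle is a simple polygon with exactly three interior angles smaller than $\pi$, i.e.\ exactly three convex corners. Since every face is a pseudo-triangle, the total number of convex corners also equals $3n^*$. Equating the two counts gives $2m-n = 3n^*$; substituting $n^* = m-n$ from (\ref{Euler}) yields $2m-n = 3(m-n)$, hence $m=2n$, as claimed. (Equivalently one could count reflex corners: each vertex contributes exactly one, for a total of $n$, while a pseudo-triangle with $k$ sides contributes $k-3$; since the side-counts sum to $2m$ this gives $2m-3n^*$ from the face side and the same identity.)

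The step needing the most care is the bookkeeping of the corner count in the periodic setting: one must check that the wedge-to-face-corner correspondence and the degree sum $\sum_v d_v = 2m$ are read off correctly in the quotient, and, most importantly, that ``exactly one reflex wedge per vertex'' is genuinely forced by the definition of pointedness. Here one argues that strict pointedness (no combination of the incident edge-vectors with strictly positive coefficients vanishes, so the vectors lie in an \emph{open} half-plane) makes the largest wedge strictly greater than $\pi$ and all others strictly smaller, ruling out degenerate straight-angle configurations; this is exactly what pins each vertex to a contribution of precisely $d_v-1$ convex corners. Once this is established, the remainder is the short linear elimination above.
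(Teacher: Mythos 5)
Your proof is correct and follows essentially the same route as the paper's: Euler's formula $n-m+n^*=0$ on the torus combined with the double count of convex corners, where pointedness forces exactly $d_v-1$ convex corners per vertex (so $2m-n$ in total) and the pseudo-triangle condition gives $3n^*$. Your added justification that pointedness yields exactly one reflex wedge per vertex is a careful elaboration of a step the paper states without proof, but the argument is the same.
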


\begin{proof}
The proof combines Euler's formula on the torus (\ref{Euler}) with counting the {\em corners} of pseudo-triangular faces. A corner is an angle smaller than $\pi$ at some vertex of a given face.  

\medskip \noindent
We denote by $d_v$ the degree of a vertex (valency) in the quotient multi-graph $G/\Gamma$, and use the classical formula $ \sum_{v\in G/\Gamma} d_v=2m $ relating the degree sum to the number of edges in a graph. Since there are $d_v-1$ corners incident to each vertex, we obtain a second relation:

$$ \sum_{v\in G/\Gamma} d_v=n+3n^*,\ \ \mbox{hence} \ 2m=n+3n^* $$

\medskip \noindent
Combined with Euler's formula (\ref{Euler}) $n-m+n^*=0$ between the number of vertex, edge and face orbits, this yields $m=2n=2n^*$.  An illustration for $n=3$ is given in Figure~\ref{FigPT3}.
\end{proof}

\medskip \noindent
{\bf Comment:} Since for pseudo-triangulations we have $m=2n$, formula (\ref{eq:PSf}) relating the dimensions $\phi$ and $\sigma$ of the spaces of periodic flexes and respectively periodic stresses implies that:
\begin{equation}\label{pstPSf}
\sigma = \phi -1
\end{equation}

\noindent
Moreover, the relation remains unaffected by relaxations of periodicity of finite index. We will make use of this observation further down and in the next section.

\medskip
\noindent
Proposition~\ref{prop:mEquals2n} shows that periodic pointed pseudo-triangulations have the right number of edges to provide smooth one-degree-of-freedom periodic mechanisms. The next proposition shows that this is actually the case. The argument amounts to showing that they have no non-trivial periodic stress. 

\begin{prop}\label{NOstress}
A periodic pseudo-triangulation cannot have nontrivial periodic stresses. The local deformation space is therefore smooth and one-dimensional and continues to be so as long as the deformed framework remains a pseudo-triangulation. The same statement holds true for any relaxation of periodicity $\tilde{\Gamma}\subset \Gamma$ of finite index.
\end{prop}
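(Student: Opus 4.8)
The plan is to reduce all three assertions to the single claim that a periodic pointed pseudo-triangulation carries no nontrivial periodic stress, and to prove that claim by transporting it, via the Main Theorem, into a statement about periodic liftings that can be settled by a maximum-principle argument on the quotient torus. Once $\sigma=0$ is established, the count $m=2n$ from Proposition~\ref{prop:mEquals2n} together with formula (\ref{pstPSf}) immediately gives $\phi=1$, and standard regular-point considerations yield smoothness and one-dimensionality; the relaxation clause then follows because a finite-index relaxation of a periodic pseudo-triangulation is again one.

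For the core step, suppose for contradiction that $s$ is a nontrivial periodic stress. By the Main Theorem it is induced by a nontrivial periodic lifting $H$, and by Proposition~\ref{prop:signStress} the sign of $s$ on an edge records whether that edge lifts to a mountain (negative), a valley (positive), or a flat edge (zero). Since $H$ is $\Lambda$-invariant it descends to a continuous function $\bar H$ on the compact torus $\R^2/\Lambda$, which therefore attains its maximum. Because an affine function has no interior maximum on a face unless it is constant there, and because on the torus there is no unbounded face, I may assume the maximum is attained at a vertex $v$ (if it is attained in the relative interior of a face or an edge, that cell is horizontal at the maximal level, so its boundary vertices are already at the maximum). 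I would then argue that near such a peak the terrain lies entirely below the height $H(v)$, so every edge incident to $v$ is a mountain or a flat edge; by Proposition~\ref{prop:signStress} this forces $s_e\le 0$ on all edges at $v$.

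The contradiction now comes from pointedness. Writing the equilibrium condition (\ref{Scondition}) at $v$ as $\sum_e s_e\,(p(u_e)-p(v))=0$ with all $s_e\le 0$, and using that pointedness supplies a direction $w$ with $(p(u_e)-p(v))\cdot w>0$ for every incident edge, pairing the equilibrium relation with $w$ forces each term, hence each $s_e$, to vanish. Thus all edges at $v$ are flat, so the faces surrounding $v$ are coplanar; since they fill a full angular neighborhood of $p(v)$, their common plane must be horizontal, placing every neighbor of $v$ at the maximal height as well. Each such neighbor is then itself a global maximum, and repeating the argument together with connectedness of $G$ propagates the maximal level to the whole framework, so $H$ is constant and $s$ is trivial, the desired contradiction. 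I expect this maximum-principle step to be the main obstacle: one must pin down the mountain/valley sign at the peak precisely through Proposition~\ref{prop:signStress}, dispose of the face/edge/vertex cases for the location of the maximum, and justify that coplanar faces filling a neighborhood force a horizontal plane.

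Finally, with $\sigma=0$ and $m=2n$, the rigidity matrix ${\bf R}$ of Section~\ref{sec:deformationPeriodicStress} has full row rank $m=2n$, so $(p,\pi)$ is a regular point of the edge-length map (\ref{map}); the implicit function theorem gives a smooth solution set of dimension $2n+4-m=4$, and factoring out the three trivial infinitesimal isometries leaves a smooth one-dimensional deformation space (consistent with $\phi=1$ from (\ref{pstPSf})). Since pointedness and the pseudo-triangle property are open conditions, they persist on an interval of the deformation, where the same no-stress argument keeps ${\bf R}$ of full rank and the deformation space smooth and one-dimensional. For a finite-index relaxation $\tilde{\Gamma}\subset\Gamma$, the framework $(G,\tilde{\Gamma},p,\pi|_{\tilde{\Gamma}})$ is again a planar non-crossing periodic pseudo-triangulation, now with $\rho n$ vertex orbits and $\rho m=2\rho n$ edge orbits, so the entire argument applies verbatim on the finite cover $\R^2/\pi(\tilde{\Gamma})$; Proposition~\ref{prop:persistence} furnishes an independent cross-check, since it shows the stress dimension can only remain $0$ under relaxation.
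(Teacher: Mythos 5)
Your reduction (Main Theorem $\to$ periodic lifting $\to$ maximum principle on the torus), your implicit-function-theorem conclusion, and your treatment of the relaxation clause all follow the paper's line and are sound. But the step you yourself flag as the crux fails: it is \emph{not} true that every edge incident to a global maximum vertex $v$ must be a mountain or a flat edge. Whether an edge is a mountain or a valley is governed by the transversal behavior of the terrain across the edge, not by the behavior along it, and a valley edge can perfectly well descend from a peak. Concretely, place $p(v)$ at the origin with incident edges to $u_1=(1,\tfrac{1}{10})$, $u_2=(1,0)$, $u_3=(1,-\tfrac{1}{10})$, let the reflex face at $v$ (angle $>\pi$, as pointedness provides) be lifted horizontally at height $0$, and lift the two narrow faces to the planes $z=-x+10y$ and $z=-x-10y$. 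Then $H\le 0$ near $p(v)$ with $H(p(v))=0$, so $v$ is a maximum of the local picture; yet the edge $vu_2$ is a \emph{valley}, since the terrain rises on both sides of it, and one computes $s_{vu_1}=s_{vu_3}=-10$, $s_{vu_2}=+20$, which satisfies the equilibrium condition (\ref{Scondition}) at $v$. So at a pointed maximum vertex the incident stress signs can be mixed: both your claim that $s_e\le 0$ for all edges at $v$ and your intended conclusion (all $s_e=0$ at $v$, all faces around $v$ coplanar and horizontal) are false as local statements, and the propagation/connectedness step built on them collapses.

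The example also shows that no argument of this purely local shape --- reading off stress signs at the maximum vertex and feeding them into equilibrium --- can close the proof, because the local picture at a pointed maximum vertex is entirely consistent with a nontrivial stressed lifting. The sound local fact, and the one the paper's proof actually uses, is different: at a pointed vertex some incident face has angle exceeding $\pi$, and an affine function on a cone spanning more than a half-plane cannot attain its maximum at the apex unless its linear part vanishes. This forces the \emph{reflex} face (not every face) at a maximum vertex to be horizontal, so a pointed vertex cannot be a strict maximum, and symmetrically for minima --- which is the contradiction the paper invokes. (Even this leaves the degenerate ``flat plateau'' case, in which the maximum level set is a nonempty union of horizontal faces, to be handled; your propagation idea is aimed at exactly that degeneracy, but to have any chance it must propagate along reflex faces, not through the false all-edges-flat claim.) The remaining parts of your write-up --- $\sigma=0$ together with $m=2n$ giving full row rank of $\mathbf{R}$, a smooth one-dimensional deformation space after removing the three trivial motions, openness of the pseudo-triangulation conditions, and the verbatim transfer to a finite-index relaxation $\tilde{\Gamma}\subset\Gamma$ --- are correct and essentially as in the paper.
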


\begin{proof} A nontrivial stress would give, by our Main Theorem, a periodic lifting.
Such a lifting must have at least one vertex achieving the global maximum and another achieving the global minimum. However, with an angle exceeding $\pi$ on some face around each vertex, neither a maximum nor a minimum is possible.  Thus, the edge length conditions are infinitesimally independent and the implicit function theorem gives a local deformation space
which is smooth and of dimension one. 
\end{proof}

\noindent
We present now a proof of a most remarkable property of periodic pseudo-triangulations.  

\begin{theorem} {\bf (Periodic pointed pseudo-triangulations have expansive 1dof flexes)}\label{thm:expansive}
Let $(G,\Gamma,p,\pi)$ be a planar periodic pseudo-triangulation. Then the framework has a one-parameter periodic deformation, which is expansive for as long as it remains a pseudo-triangulation.
\end{theorem}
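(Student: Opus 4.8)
The plan is to reduce the statement to an infinitesimal inequality and then certify the sign of every strain rate through the Main Theorem. By Proposition~\ref{NOstress} the framework is a smooth periodic mechanism with exactly one degree of freedom, so up to scaling there is a single nonzero infinitesimal periodic flex $(\dot p,\dot\Lambda)$; I fix one of its two orientations. Since the deformation remains a pseudo-triangulation on an open time interval and all distances vary smoothly, it suffices to prove that, for the chosen orientation, the strain rate $\epsilon_{uw}=(\dot p_w-\dot p_u)\cdot(p_w-p_u)$ is nonnegative for every pair of vertices $u,w$ of the infinite framework (pairs in the same $\Gamma$-orbit included, so that the lattice is allowed only to widen). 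Integrating $\tfrac{d}{dt}\tfrac12|p_t(w)-p_t(u)|^2=\epsilon_{uw}\ge 0$ along the path then yields nondecreasing distances, i.e. expansiveness, for as long as the configuration stays a pseudo-triangulation.

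Next I would set up the duality between expansive flexes and stresses. Form the periodic tensegrity $T^{+}$ by keeping the pseudo-triangulation edges as bars and declaring every non-edge orbit a strut that may only lengthen; expansiveness of the chosen flex is exactly the statement that it is an admissible infinitesimal motion of $T^{+}$. By the Farkas (tensegrity) alternative, applied to the periodic rigidity matrix of Section~\ref{sec:deformationPeriodicStress}, either such an expansive flex exists, or $T^{+}$ carries a nonzero proper periodic self-stress $\omega$, one that is of a single fixed sign on every strut. The idea is to establish the first alternative by excluding the second: a proper $\omega$ is in particular a periodic equilibrium stress, hence by the Main Theorem it is induced by a periodic lifting $H$, and by Proposition~\ref{prop:signStress} each strut, carrying stress of the forbidden sign, must lift to a mountain (respectively valley) edge. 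Thus a contracting non-edge would be certified by a $\Gamma$-invariant lifting with a prescribed concavity pattern on the adjoined struts, and the whole question is turned into the nonexistence of such a lifting.

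The decisive step is this exclusion, and I expect it to be the main obstacle. The natural input is pointedness, used as in Proposition~\ref{NOstress}: a reflex angle sits on some face at every vertex, which is precisely what forbids a nontrivial periodic lifting for the bare pseudo-triangulation. The difficulty is that adjoining struts subdivides the reflex pseudo-triangles into convex corners, so the clean ``no global maximum or minimum'' argument no longer applies verbatim to $T^{+}$, and one must instead show that the mountain pattern forced on the struts is globally incompatible with a $\Gamma$-invariant lifting over the subdivided torus. I would carry this out as a periodic version of the Rote--Santos--Streinu expansion-polytope analysis \cite{RSS1,RSS2}, with the deforming lattice playing the role that the convex hull plays in the finite theory for anchoring the flex orientation, and using connectivity of $G$ and $G^{*}$ to propagate the sign from a single reference pair to all pairs. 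Particular care is needed for struts of zero strain rate, where $\omega$ is allowed to vanish, so that the certificate degenerates and the sign must be recovered by a limiting or connectivity argument rather than directly from a lifting.
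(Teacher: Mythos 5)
Your high-level framing (stress--lifting duality via the Main Theorem, sign control via Proposition~\ref{prop:signStress}, reduction to an infinitesimal same-sign statement using Proposition~\ref{NOstress}) matches the spirit of the paper, but the proposal has a genuine gap exactly where you say the ``decisive step'' lies, and that step \emph{is} the theorem. You reduce expansiveness to excluding a nonzero proper periodic self-stress of the tensegrity $T^{+}$ that has \emph{all} non-edge pairs as struts, and then leave that exclusion unproven, remarking only that a ``periodic version of the Rote--Santos--Streinu analysis'' should work. The reason you cannot close it is structural: adjoining all struts destroys pointedness at every vertex, so the one tool the paper extracts from pseudo-triangulations (a reflex angle at each vertex forbids that vertex from being a local extremum of a lifting) no longer applies, as you yourself note. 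The paper never needs a global exclusion over $T^{+}$. Because the flex space is one-dimensional (Proposition~\ref{NOstress}), expansiveness is \emph{equivalent} to a pairwise statement: any two vertex pairs whose distances vary under the flex vary with the same sign. The paper proves this by inserting only the two corresponding edge orbits: with one inserted orbit the lifting's global maximum and minimum are forced to the endpoints of that single edge, and since a maximum needs at least three mountain edges and a minimum at least three valley edges, no nontrivial periodic stress exists (Lemma~\ref{edge1}); with a second inserted orbit the stress space becomes one-dimensional and the same extremum argument forces \emph{opposite} stress signs on the two new edges (Lemma~\ref{edge2}); orthogonality of this stress to the image of the rigidity matrix, whose entries vanish on the old edges, then gives same-sign infinitesimal variation of the two distances (Proposition~\ref{edge12}). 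Keeping the number of inserted edges at two is precisely what keeps pointedness usable.

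Two secondary problems would also need repair even if the exclusion step were granted. First, the Farkas/Roth--Whiteley alternative you invoke is a finite-dimensional statement, but $T^{+}$ has infinitely many strut orbits modulo $\Gamma$ (one for each pair of vertex orbits \emph{and} each relative period), so the dual alternative requires a limiting or compactness argument you do not supply --- and which the one-degree-of-freedom observation renders unnecessary. Second, the Main Theorem and Proposition~\ref{prop:signStress} apply to \emph{non-crossing} periodic frameworks, while struts generically cross edges of the pseudo-triangulation; so the lifting certificate for your stress $\omega$ is not even available as stated. The paper confronts the same issue and resolves it with Bow's method (splitting edges at crossings, with the sign-preserving stress correspondence (\ref{Bow})), a step your proposal omits entirely.
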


\noindent
{\bf Proof outline.} 
We compare the infinitesimal variation of the distance between two pairs of vertices (which do not belong to rigid subcomponents) and show that one cannot increase while the other decreases. An argument already used by Maxwell in \cite{M3} shows that it is enough to reason  with two pairs of vertices which do not create self-crossing when inserted as edges (and replicated by periodicity).  After one edge insertion, the framework becomes infinitesimally rigid and after the second edge insertion it becomes periodically stressed, with a one dimensional periodic stress space. Since either pointedness or non-crossing of edges is violated only by the newly inserted edges, and since a pointed vertex cannot be a local extremum for the height function, we infer that the associated periodic lifting must have its global maximum and global minimum at two of the vertices in the two added edges.  Considerations of stress sign and of the related mountain/valley type of the lifted edges, as summarized in Prop.~\ref{prop:signStress}) show that the new edge orbits have stress factors of opposite signs. This in turn implies (by linear programming duality) that the infinitesimal variation of the distance between the two pairs of vertices is of the same kind: expansion or contraction. 

\medskip
\noindent
{\bf Proof details.} 
We develop our arguments in a sequence of Lemmas. First, we consider what happens after one edge is inserted in a non-crossing manner.

\medskip \noindent
\begin{lem}\label{edge1}
The periodic framework obtained by inserting a new $\Gamma$-orbit of non-crossing edges in a periodic pointed pseudo-triangulation is minimally rigid, that is, infinitesimally rigid with $2n+1$ edge orbits.
\end{lem}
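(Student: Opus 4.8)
The plan is to count edge and vertex orbits before and after the insertion, combine this with the stress-counting formula (\ref{eq:PSf}), and then rule out nontrivial periodic stresses to conclude infinitesimal rigidity. First I would observe that inserting a new $\Gamma$-orbit of edges into a periodic pseudo-triangulation leaves $n=\mathrm{card}(\V/\Gamma)$ unchanged while raising the edge-orbit count from $m=2n$ (by Proposition~\ref{prop:mEquals2n}) to $2n+1$. Plugging $m=2n+1$ into the relation (\ref{eq:PSf}), namely $\sigma=\phi-1+(m-2n)$, gives $\sigma=\phi$, so the dimension of the periodic stress space equals the dimension of the periodic flex space. Hence infinitesimal rigidity (i.e.\ $\phi=0$) is equivalent to the \emph{absence} of nontrivial periodic stresses, and the task reduces to showing $\sigma=0$ for the augmented framework.

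To rule out a nontrivial periodic stress I would argue exactly as in the proof of Proposition~\ref{NOstress}, invoking the Main Theorem. A nontrivial periodic stress on the augmented framework corresponds to a nontrivial periodic lifting $H$. Such a lifting attains a global maximum and a global minimum over the (compact) torus quotient. At a vertex where the framework is still pointed---all incident edges lying on one side of some line---the height function cannot have a strict local extremum, so neither the maximum nor the minimum can occur at such a vertex. The only vertices whose pointedness may have been destroyed are the endpoints of the single inserted edge-orbit, so any extremum of the lifting must be supported on these finitely many vertex orbits.

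The main obstacle, and the crux of the argument, is to show that a single inserted edge-orbit cannot supply \emph{both} a maximum \emph{and} a minimum for a nontrivial lifting. The inserted edge joins two vertices (possibly across a period); at most these two endpoints become non-pointed. I would argue that since the lifting is nontrivial, the maximum and minimum altitudes are distinct, forcing them to be attained at two distinct vertices that are both non-pointed. But one inserted edge-orbit creates at most the two endpoints of the edge as candidate extremal vertices, and one must then check that these two cannot simultaneously serve as a strict global max and a strict global min of a nontrivial $\Gamma$-invariant lifting while all other vertices remain pointed (hence non-extremal and forced to the same level)---a configuration that the pseudo-triangulation structure, together with the single added edge, does not permit. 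Establishing this incompatibility rigorously is the delicate step; once it is in place, we conclude $\sigma=0$, hence $\phi=0$, so the augmented framework is infinitesimally rigid with $2n+1$ edge orbits, i.e.\ minimally rigid.
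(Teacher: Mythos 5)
Your reduction matches the paper's step for step up to the crux: inserting the new orbit keeps $n$ and gives $m=2n+1$, formula (\ref{eq:PSf}) yields $\sigma=\phi$, so minimal rigidity is equivalent to $\sigma=0$; a nontrivial periodic stress would produce a nontrivial periodic lifting by the Main Theorem; the lifting attains its global maximum and minimum at vertices, pointed vertices cannot be extrema, and hence both extrema must sit at endpoints of the inserted edge orbit. All of this is exactly the paper's argument.

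However, the step you flag as ``the delicate step'' and leave unproven is precisely the content of the lemma, so as written the proposal has a genuine gap. The paper closes it with a short mountain/valley sign count, using Proposition~\ref{prop:signStress}. A vertex realizing the global maximum of a polyhedral terrain is a peak, and a peak requires at least three incident \emph{mountain} edges whose directions are non-pointed (they must positively span the plane); dually, a minimum vertex requires at least three non-pointed \emph{valley} edges. At the maximum endpoint, the old edges all come from the pseudo-triangulation and therefore form a pointed set, so the inserted edge must be among the mountain edges there; by Proposition~\ref{prop:signStress} its stress factor is negative. The same reasoning at the minimum endpoint forces the inserted edge to be a valley, i.e.\ to carry a positive stress factor. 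Since the inserted orbit carries a single stress value, this is a contradiction, and $\sigma=0$ follows. Note also that the configuration you gesture at ruling out --- all other vertices ``forced to the same level'' --- is neither true nor needed: the contradiction comes from the sign of the stress on the one new edge, not from the altitudes of the remaining (pointed, non-extremal, but by no means level) vertices.
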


\begin{proof}\ Relation (\ref{eq:PSf}) for the extended framework implies that the dimension of the space of periodic flexes equals the dimension of the space of periodic stresses, i.e. $\sigma=\phi$. We argue now that no nontrivial periodic stress exists, i.e. $\sigma=0$. Indeed, a nontrivial periodic stress would give a nontrivial periodic lifting. The latter must have a global maximum vertex and a global minimum vertex. Since pointed vertices cannot be either maxima or minima,
the two extrema must occur at the endpoints of the new edge. However, a maximum vertex requires at least three non-pointed `mountain' edges and a minimum vertex requires at least three non-pointed `valley' edges. The new edge would have to be a `mountain' because of the maximum at one end and would have to be a `valley' because of the minimum at the other end.
This contradiction proves the lemma.
\end{proof}

\medskip \noindent
We analyze now what happens after a non-crossing second edge orbit insertion. By inserting this new edge and its $\Gamma$-orbit in the minimally rigid framework obtained above, we must have a one-dimensional space of periodic stresses
since the space of infinitesimal flexes remains null. 

\begin{lem}\label{edge2}
If we add two new edge orbits to a periodic pseudo-triangulation, then we cannot have the same sign for the stress factors on the two new edge orbits inserted in the pseudo-triangulation.
\end{lem}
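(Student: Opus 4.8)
The plan is to pass to the lifting guaranteed by the Main Theorem and to read off the stress signs of the two new edge orbits from the global extrema of the lifted terrain. As recorded just above, after the two insertions the space of periodic stresses is one-dimensional ($\phi=0$ and $m-2n=2$ in formula (\ref{eq:PSf})), so there is a periodic stress $s$, unique up to scaling. I would first note that $s$ is nonzero on each of the two new edge orbits: if it vanished on one of them, it would restrict to a nontrivial periodic stress on the framework obtained by a single insertion, which is minimally rigid by Lemma~\ref{edge1} and hence stress-free, a contradiction. By the Main Theorem, $s$ is induced by a periodic lifting $H$, determined up to an additive constant; since $s$ is nontrivial, $H$ is non-constant.

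Next I would locate the global extrema of $H$. Being periodic and non-constant, $H$ descends to a non-constant piecewise-affine function on the compact torus $\R^2/\Lambda$, so it attains a global maximum at some vertex $M$ and a global minimum at some vertex $m$. As in the proofs of Proposition~\ref{NOstress} and Lemma~\ref{edge1}, a \emph{pointed} vertex (one whose incident edges lie in a half-plane, hence bordering a face with interior angle exceeding $\pi$) can be neither a maximum nor a minimum of a lifting. In the pseudo-triangulation every vertex is pointed, so a vertex can fail to be pointed in the augmented framework only if a new edge is incident to it and exits its pointed cone. Consequently both $M$ and $m$ are endpoints of new edges, and each is incident to at least one new edge orbit.

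The sign argument then rests on one local fact, packaged by Proposition~\ref{prop:signStress}: at a global maximum every incident non-flat edge is a \emph{mountain} edge (negative stress factor), since the terrain is concave there; dually, at a global minimum every incident non-flat edge is a \emph{valley} edge (positive stress factor). Now suppose, for contradiction, that both new orbits carry the same sign. If both are positive, consider the new edge incident to $M$: it carries nonzero stress, hence is non-flat, and being incident to the global maximum it must be a mountain edge with negative stress, contradicting positivity. If both are negative, consider the new edge incident to $m$: non-flat and incident to the global minimum, it must be a valley edge with positive stress, again a contradiction. Hence the two stress factors have opposite signs, which is the assertion.

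The main obstacle I expect is not the combinatorics but the careful justification of the two local statements about extrema, exactly where degeneracies can intervene: that a pointed vertex cannot be an extremum, and that every non-flat edge incident to a global extremum has the expected mountain/valley type. Both must be argued allowing \emph{non-strict} extrema, i.e. a flat top (or bottom) region of $H$ spanning several faces; the point to verify is that any non-flat edge leaving such a maximal flat region descends on both sides and is therefore concave (a mountain). I would reduce this to the affine behaviour of $H$ on a single face: on the reflex face at a pointed vertex the restriction of $H$ is affine and cannot attain an extremum at an interior (reflex) vertex of its convex hull, which yields both the pointedness obstruction and the correct crease sign along the boundary of the flat region. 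Once this is settled, the extraction of signs from $M$ and $m$ is immediate.
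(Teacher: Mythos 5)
Your proposal follows the same skeleton as the paper's proof: pass to the periodic lifting supplied by the Main Theorem, locate the global extrema of the terrain at vertices, use pointedness to force the extrema onto endpoints of the newly inserted edges, and extract a sign contradiction there. Your preliminary step (the periodic stress is nonzero on each new orbit, by restricting to the single-insertion framework of Lemma~\ref{edge1}) is correct and makes explicit something the paper leaves implicit. However, the local fact on which your sign extraction rests --- ``at a global maximum every incident non-flat edge is a mountain edge, since the terrain is concave there'' --- is false, and it is exactly the step you flagged as needing verification. A peak of a piecewise-affine terrain can have a groove (a ravine) running down from it. Concretely, put the maximum vertex $M$ at the origin with $H(M)=0$, and let two adjacent faces lie in the sectors $0\le\theta\le\pi/4$ and $-\pi/4\le\theta\le 0$, carrying the affine functions $H=-x+y/2$ and $H=-x-y/2$ respectively. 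They agree along the common edge (the positive $x$-axis), $H<0$ on both faces away from $M$, and the fan around $M$ can be completed by strictly descending faces so that $M$ is a strict global maximum; yet near the common edge the terrain equals $-x+|y|/2$, which is locally \emph{convex}: that edge is a valley, with positive stress, incident to the maximum. Note also that Proposition~\ref{prop:signStress} only supplies the dictionary between stress signs and mountain/valley type; it says nothing about extrema, so it cannot ``package'' your claim. The same picture refutes your proposed justification for the non-strict case: a groove edge leaving the maximal flat region does not ``descend on both sides'' --- it rises transversally.

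The gap is repairable, and the repair is what the paper actually uses (stated inside the proof of Lemma~\ref{edge1}): at a vertex achieving a maximum, the incident \emph{mountain} edges must form a non-pointed set --- at least three of them, positively spanning the plane --- and dually, the valley edges at a minimum. Valleys incident to a peak are allowed; what is forced is that the mountains alone already surround the vertex. Since the old pseudo-triangulation edges at any vertex lie in an open half-plane, the mountains at $M$ cannot all be old edges, so \emph{some} new edge incident to $M$ is a mountain, i.e.\ carries negative stress; this contradicts the assumption that both new orbits are positive, and the dual argument at $m$ kills the case where both are negative. With this substitution your global scheme goes through uniformly --- in fact more cleanly than the paper's proof, since it needs neither the non-flatness of the new edge at $M$ nor the paper's case distinction according to whether the two new orbits share a vertex. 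But as written, your proof rests on a false lemma, and the sketched justification of that lemma (reduction to affine behaviour on a single face) cannot be made to work; what must be proved instead, including in the flat-top case, is the non-pointedness of the set of mountain edges around the maximum.
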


\medskip \noindent
\begin{proof}\ We argue first for the case of no common vertex mod $\Gamma$ for the two new edges. Then, as shown above in Lemma~\ref{edge1}, we cannot have the maximum and the minimum at the two ends of the same new edge. Thus the edge reaching to the maximum is a `mountain' and the other one, reaching to the minimum, is a `valley'.

\medskip \noindent
In case the two new edges share a vertex, we note that at least one extremum must be at
some unshared vertex. But then the other extremum cannot be at the shared vertex and
must be at the other unshared vertex. Thus, as above, the two edges must be a `mountain' and
a `valley'.
\end{proof}

\medskip \noindent
With opposite signs confirmed for the stress factors on the two new edges, we look now 
at the infinitesimal displacement induced by the infinitesimal deformation of the pseudo-triangulation on these same edges. 

\medskip \noindent
For the framework with two new edge orbits, the non-zero stress vector must be orthogonal
on the image of the corresponding rigidity matrix. In particular, it must be orthogonal on the
image vector obtained by evaluating the rigidity matrix on the infinitesimal displacement
induced by the pseudo-triangulation. But all entries corresponding to `old' edge orbits,
that is, edge orbits in the pseudo-triangulation, will be zero. With opposite stress signs
for the two new edges, orthogonality requires the {\em same} sign for the two non-zero
entries. This means: simultaneous infinitesimal expansion or simultaneous infinitesimal
contraction. 

\medskip \noindent
By now, the following result has been established.

\begin{prop}\label{edge12}
Let $(G,\Gamma,p,\pi)$ be a planar pseudo-triangulation. Let two pairs of vertices be such that
both corresponding distances vary infinitesimally under the infinitesimal deformation of the
pseudo-triangulation. Assume that inserting the corresponding two edges (and their $\Gamma$
orbits) does not produce self-crossing. Then, the two edges vary infinitesimally in the same way:
both expand or both contract.
\end{prop}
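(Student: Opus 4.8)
The plan is to combine the opposite-sign conclusion of Lemma~\ref{edge2} with the defining orthogonality of a periodic stress. First I would fix, once and for all, the essentially unique infinitesimal periodic flex of the pseudo-triangulation: by Proposition~\ref{NOstress} the local deformation space is smooth and one-dimensional, so there is a nonzero infinitesimal periodic motion $\dot{x}=(\dot{p}_1,\dots,\dot{p}_n,\dot{\Lambda})$, determined up to scale, that preserves the lengths of all edge orbits of the framework. For each of the two prescribed vertex pairs I would record the infinitesimal rate of change $v_k$ ($k=1,2$) of the corresponding squared length; up to the positive factor coming from differentiating $\|\cdot\|^2$, this is exactly the pairing of the would-be edge row with $\dot{x}$, and by hypothesis both $v_1$ and $v_2$ are nonzero.

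Next I would pass to the framework $\tilde{G}$ obtained by inserting both edge orbits, which is legitimate because the non-crossing hypothesis is precisely what lets me invoke the lifting-based lemmas. By Lemma~\ref{edge1} a single non-crossing insertion already yields a minimally rigid framework (so $\phi=0$), and by formula~(\ref{eq:PSf}) the second insertion raises the stress dimension to $\sigma=1$ while the flex dimension stays null; hence $\tilde{G}$ carries a nonzero periodic stress $s$, unique up to scale. Lemma~\ref{edge2} then tells me that the two new edge orbits receive stress factors $s_1,s_2$ of opposite sign, i.e.\ $s_1 s_2<0$.

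The decisive step is the orthogonality argument. The rows of the rigidity matrix $\tilde{\bf R}$ of $\tilde{G}$ indexed by the \emph{old} edge orbits coincide verbatim with those of the pseudo-triangulation's rigidity matrix, so evaluating $\tilde{\bf R}$ on the flex $\dot{x}$ annihilates every old-edge entry (this is just the statement that $\dot{x}$ preserves old lengths), while the two new-edge entries are exactly $v_1$ and $v_2$. Since a periodic stress lies in the kernel of $\tilde{\bf R}^t$, it is orthogonal to the image of $\tilde{\bf R}$, and pairing $s$ with $\tilde{\bf R}\dot{x}$ collapses to the single relation $s_1 v_1 + s_2 v_2 = 0$. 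With $s_1,s_2$ of opposite sign and $v_1,v_2$ nonzero, this forces $v_1$ and $v_2$ to share the same sign, which is the asserted simultaneous infinitesimal expansion or simultaneous infinitesimal contraction.

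The main obstacle, to my mind, is bookkeeping in the periodic rigidity matrix rather than any conceptual difficulty: I must be careful to take $\dot{x}$ as a genuine \emph{periodic} infinitesimal flex, carrying its lattice-variation component $\dot{\Lambda}$, so that the new-edge entries of $\tilde{\bf R}\dot{x}$ really do reproduce the infinitesimal distance variations of the chosen vertex pairs (the period columns $c_\beta^j e_\beta^t$ in the row structure~(\ref{row}) contribute precisely through $\dot{\Lambda}$). Once that row structure is read correctly, the collapse to $s_1 v_1 + s_2 v_2 = 0$ is immediate, and the opposite-sign conclusion of Lemma~\ref{edge2} does the rest.
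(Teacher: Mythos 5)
Your proposal is correct and follows essentially the same route as the paper: Lemma~\ref{edge1} for minimal rigidity after one insertion, formula~(\ref{eq:PSf}) to get a one-dimensional periodic stress space after the second, Lemma~\ref{edge2} for the opposite stress signs, and then the orthogonality of the stress (kernel of $\tilde{\bf R}^t$) with $\tilde{\bf R}\dot{x}$ collapsing to $s_1 v_1 + s_2 v_2 = 0$. Your added remark about the lattice-variation component $\dot{\Lambda}$ entering through the period columns is a fair bookkeeping point that the paper leaves implicit.
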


\begin{wrapfigure}{l}{0.6\textwidth}
\vspace{-24pt}
{\includegraphics[width=0.28\textwidth]{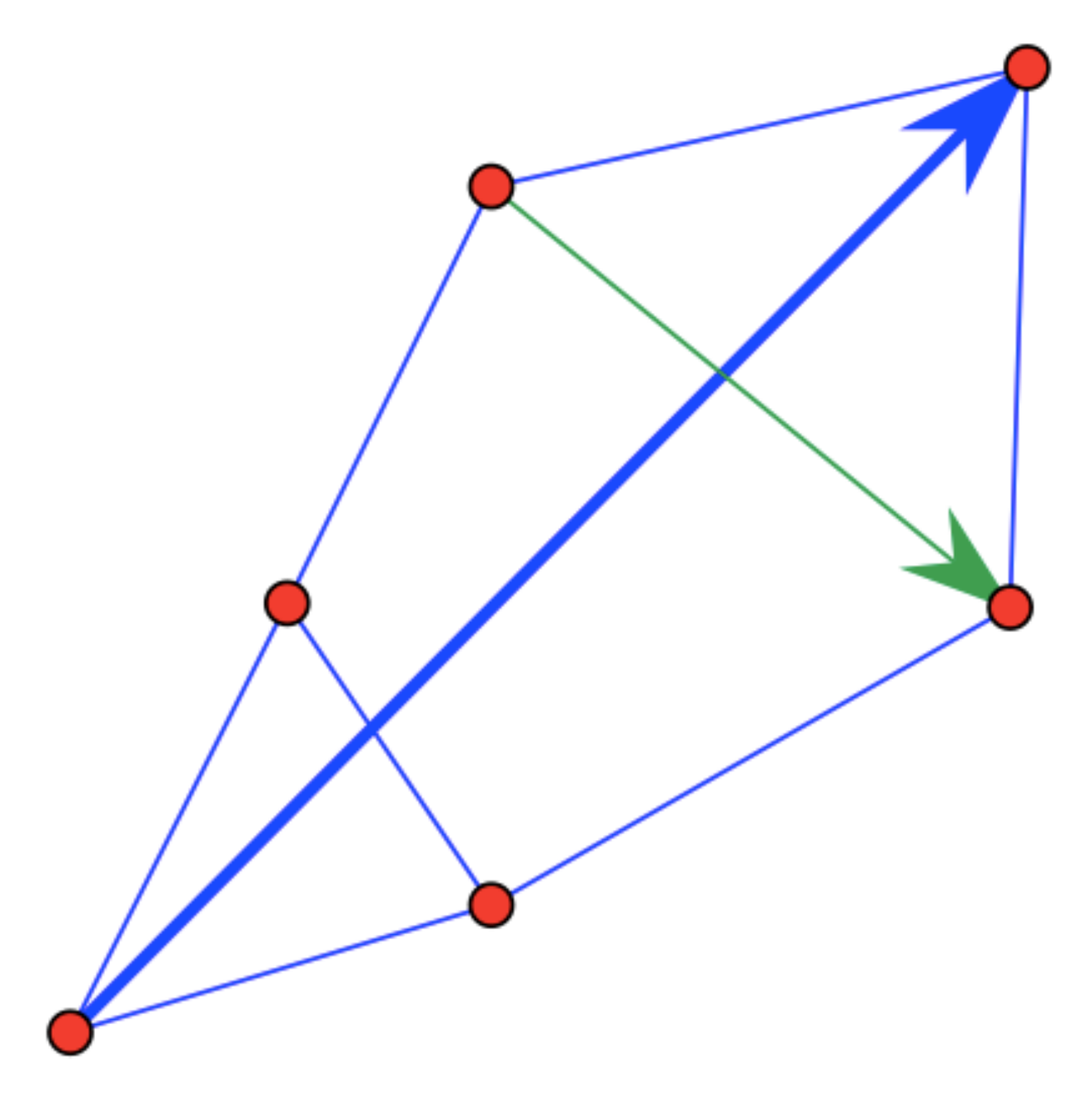}}
\hspace{1pt}
{\includegraphics[width=0.28\textwidth]{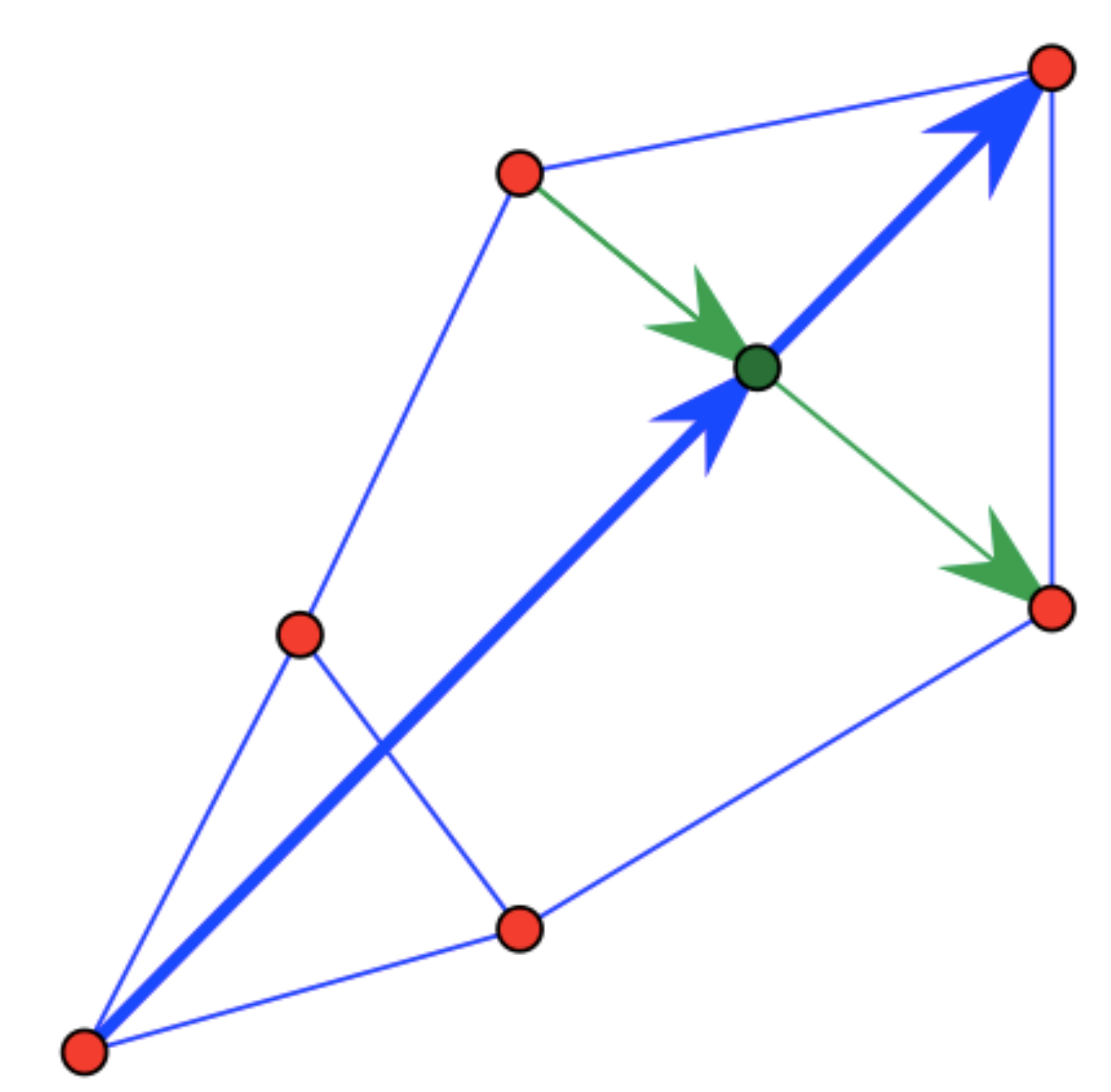}}
 \vspace{-6pt}
 \caption{ Bow's method allows a reduction of stress considerations to a non-crossing case. One step is illustrated above and another one would eliminate self-crossing. }
\vspace{-18pt}
 \label{fig:BowMethod}
\end{wrapfigure}

\medskip \noindent
As a final step to complete the proof of Theorem \ref{thm:expansive}, we want to remove the assumption that the two new edges maintain the non-crossing nature of the periodic framework, and thus to prove the full expansive character of the pseudo-triangulation mechanism. In fact, because we are comparing the stresses and infinitesimal flexes of pairs of edges, it suffices to assume that the first one is not crossing the rest, hence we only need to remove the non-crossing assumption for the second inserted edge.

\medskip \noindent
Figure~\ref{fig:BowMethod} illustrates the (descending) inductive step whereby stress considerations in a situation where an edge intersects several other edges can be reduced to
an equivalent situation without self-crossing. In \cite{M3} the procedure is called Bow's method.

\medskip \noindent
We briefly review the nature of the argument by explaining the stress correspondence for
the step shown in Figure~\ref{fig:BowMethod}. A new vertex has been inserted at the 
last crossing of the marked edge vectors. 
With obvious notations (by which an edge $\alpha$ is split into two edges $\alpha_1$ and $\alpha_2$, and similarly for the edge $\beta$), we have, for the corresponding edge vectors:
$$ e_{\alpha}=e_{\alpha_1}+e_{\alpha_2} 
\ \ \mbox{and} \  \  e_{\beta}=e_{\beta_1}+e_{\beta_2}  $$

\medskip \noindent
Geometric stress conditions at the new vertex require 
$$ s_{\alpha_1}e_{\alpha_1}=s_{\alpha_2}e_{\alpha_2} 
\ \ \mbox{and} \ \  s_{\beta_1}e_{\beta_1}=s_{\beta_2}e_{\beta_2}  $$

\medskip \noindent
and the correspondence of stresses (valid for periodic stresses as well) is expressed through
the formulae:

\begin{equation}\label{Bow}
s_{\alpha}e_{\alpha}=s_{\alpha_1}e_{\alpha_1}=s_{\alpha_2}e_{\alpha_2} \ \ \mbox{and} 
\ \  s_{\beta}e_{\beta}=s_{\beta_1}e_{\beta_1}=s_{\beta_2}e_{\beta_2}
\end{equation}

\noindent
with all other stress factors (on edges different from $\alpha$ and $\beta$) remaining the same.

\medskip \noindent
Relations (\ref{Bow}) show that in this isomorphic correspondence of stresses, the sign
of the stress factors along the fragmented edges is the same as in the initial framework,
hence the argument given in Lemma~\ref{edge2} carries over. Thus, whether crossing or not crossing other edges, the insertion of a second new orbit of edges in a periodic pseudo-triangulation produces a stressed framework with opposite signs for the stress factors on the two added edge orbits. As shown, this implies the same type of infinitesimal variation of the two corresponding  vertex distances when deforming the periodic pseudo-triangulation.  The limit case, when the inserted edge passes through one or several vertices of the framework is treated similarly. In fact, the step for crossing through a vertex is simpler and only requires the
splitting of the inserted edge into two edges.
This concludes the proof of Theorem~\ref{thm:expansive}.
\qed

\medskip \noindent
{\bf Comment.}\ We retain from this section the conspicuous property of periodic pseudo-triangulations of maintaining the same deformation space under arbitrary relaxations of periodicity of finite index. We make use of this property in the final section, where we uncover the ultrarigidity of a family of frameworks obtained from pseudo-triangulations.

\medskip \noindent
We conclude the paper with two applications motivated by questions arising in materials science.

\section{Expansive and auxetic paths}
\label{sec:auxetic}

\noindent
Periodic pseudo-triangulations determine a large, yet distinctive class of planar periodic frameworks. We have presented above some of their remarkable properties: their local deformation space is smooth and one-dimensional and this deformation path is {\em expansive} as long as the framework remains a pseudo-triangulation, that is, for the proper sense of variation of the parameter, all distances between pairs of vertices increase or stay the same. Moreover, a relaxation of periodicity to a subgroup of finite index does not change these characteristics. 

\medskip \noindent
In this section we emphasize, from a {\em purely geometric perspective}, the relevance of 
periodic pseudo-triangulations and their expansive paths for what has been termed 
{\em auxetic behavior} in materials science. 

\medskip \noindent
In materials science, the term {\em auxetic}, suggestive of increase or growth, refers to solids with a negative Poisson's ratio \cite{L,ENHR}.  Quoting from \cite{LWMSE}: {\em``A negative Poisson's ratio in a solid defines the counter-intuitive lateral widening upon application of a longitudinal tensile strain."} For anisotropic solids, such as single crystals, ``the variation of
elastic moduli with direction is also relevant" ({\em ibid.} p.6445). Thus, auxetic behavior is
primarily defined in terms of physical characteristics of the material under consideration.
However, in many instances with pronounced geometric structural underpinnings, geometrical explanations have been proposed  \cite{YHWP,GAE,G-E,MRMST}.

\medskip \noindent
In our context, which is that of periodic frameworks and their deformations, we must rely
on a comprehensive but strictly geometrical concept of auxetic path. This concept 
is introduced in our companion paper \cite{BS5} and will be briefly reviewed here. 

\medskip \noindent
The geometric approach to auxetics presented in \cite{BS5} is formulated in arbitrary dimension
$d$ and addresses one-parameter deformations of a given periodic framework in $\R^d$. Two aspects may  be emphasized from the start: (i) that when present, the auxetic property refers to the deformation path under consideration and not the framework itself, which oftentimes allows other deformation paths which need not be auxetic and (ii) the auxetic character
is an expression of a particular type of variation of the periodicity lattice along the deformation path.

\medskip \noindent
Mathematically, we rely on the notion of {\em contraction operator} in a Hilbert space, which we
recall below. For our purposes, $\R^d$ with the standard norm $|x|=(x\cdot x)^{1/2}$, will sufffice.
Let $T: \R^d \rightarrow \R^d$ be a linear operator. Then, the operator norm, or simply the norm of $T$ is

$$ ||T||=sup_{|x|\leq 1} |Tx|=sup_{|x|=1} |Tx| $$

\noindent
$T$ is called a contraction operator, or simply a contraction, when $||T||\leq 1$.

\medskip \noindent
{\bf Comment.}\ From $|Tx|\leq ||T||\cdot |x|$ it follows that contraction operators are
characterized by the property of taking the unit ball to a subset of itself. For a {\em strict 
contraction} one requires $||T|| < 1$.

\medskip \noindent
Let us assume now that we have a {\em one-parameter} deformation $(G,\Gamma, p_{\tau},\pi_{\tau}), \tau\in (-\epsilon,\epsilon)$ of a periodic framework in $\R^d$. The corresponding periodicity lattices
$\Lambda_{\tau}=\pi_{\tau}(\Gamma)$, offer by themselves a way to compare any two sequential moments $\tau_1 < \tau_2$ by looking at the {\em unique linear operator} $T_{\tau_2\tau_1}$ defined by

\begin{equation}\label{t1t2}
 \pi_{\tau_1}=T_{\tau_2\tau_1}\circ \pi_{\tau_2} 
\end{equation}

\begin{definition}\label{auxetic}
A differentiable one-parameter deformation  $(G,\Gamma, p_{\tau},\pi_{\tau}), \tau\in (-\epsilon,\epsilon)$ of a periodic framework in $\R^d$ is called an {\em auxetic path}, or simply auxetic, when for any $\tau_1 < \tau_2$, the linear operator $T_{\tau_2\tau_1}$ defined by (\ref{t1t2}) is a contraction.
\end{definition}
 
\medskip \noindent
In \cite{BS5} we prove the equivalence of this {\em geometric criterion for auxetic paths}
with the following characterization involving the evolution of the Gram matrix of a generating basis for the period lattice of the framework. With conventions already used in previous sections,
after choosing an independent set of generators for $\Gamma$, the image $\pi_{\tau}(\Gamma)$
is completely described via the $d\times d$ matrix $\Lambda_{\tau}$ with column vectors given
by the images of the generators under $\pi_{\tau}$. The associated Gram matrix will be

$$ \omega(\tau)=\Lambda^t_{\tau}\Lambda_{\tau}.  $$

\begin{prop}\label{tangent}
A deformation path $(G,\Gamma, p_{\tau},\pi_{\tau}), \tau\in (-\epsilon,\epsilon)$ is auxetic if and only if the curve of Gram matrices $\omega(\tau)$ defined above has all its
tangents in the cone of positive semidefinite symmetric $d\times d$ matrices.
\end{prop}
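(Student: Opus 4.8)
The plan is to convert the contraction condition of Definition~\ref{auxetic} into a monotonicity statement for the Gram matrices in the Loewner (positive semidefinite) order, and then to pass between this global order statement and the infinitesimal tangent condition by a standard scalar argument. I write $A\succeq B$ to mean that $A-B$ is positive semidefinite.

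First I would make the comparison operator explicit. Writing the defining relation (\ref{t1t2}) in terms of the period matrices yields $\Lambda_{\tau_1}=T_{\tau_2\tau_1}\Lambda_{\tau_2}$, so that $T_{\tau_2\tau_1}=\Lambda_{\tau_1}\Lambda_{\tau_2}^{-1}$ (this is legitimate since the period lattice has full rank, hence $\Lambda_{\tau}$ is invertible along the path). Using $||T||^2=\lambda_{\max}(T^tT)$, the contraction condition $||T_{\tau_2\tau_1}||\le 1$ is equivalent to $I-T_{\tau_2\tau_1}^tT_{\tau_2\tau_1}\succeq 0$. Substituting the expression for $T_{\tau_2\tau_1}$ gives $T_{\tau_2\tau_1}^tT_{\tau_2\tau_1}=(\Lambda_{\tau_2}^{-1})^t\,\omega(\tau_1)\,\Lambda_{\tau_2}^{-1}$, and conjugating this inequality by the invertible matrix $\Lambda_{\tau_2}$, that is applying the congruence $M\mapsto \Lambda_{\tau_2}^tM\Lambda_{\tau_2}$, which preserves the Loewner order, turns $T_{\tau_2\tau_1}^tT_{\tau_2\tau_1}\preceq I$ into $\omega(\tau_1)\preceq\omega(\tau_2)$. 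Thus the path is auxetic precisely when $\omega(\tau_1)\preceq\omega(\tau_2)$ for all $\tau_1<\tau_2$, that is, when $\omega(\tau)$ is nondecreasing in the Loewner order.

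It then remains to show that Loewner-monotonicity of the curve $\omega(\tau)$ is equivalent to having every tangent $\omega'(\tau)$ positive semidefinite. For this I would test against a fixed vector $v\in\R^d$ and consider the scalar function $f_v(\tau)=v^t\omega(\tau)v$. The ordering $\omega(\tau_1)\preceq\omega(\tau_2)$ for all $\tau_1<\tau_2$ holds if and only if every $f_v$ is nondecreasing, which by the elementary one-variable relation between monotonicity and the sign of the derivative holds if and only if $f_v'(\tau)=v^t\omega'(\tau)v\ge 0$ for all $\tau$ and all $v$; and this last condition is exactly the requirement $\omega'(\tau)\succeq 0$ for every $\tau$. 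Combining with the previous paragraph yields the stated equivalence.

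The computational steps are routine; the one point that deserves care is the passage between the global order condition (quantified over all pairs $\tau_1<\tau_2$) and the pointwise tangent condition. This is handled cleanly by the reduction to the scalar functions $f_v$, using on the one hand that a symmetric matrix is positive semidefinite if and only if all its quadratic forms $v^t(\cdot)v$ are nonnegative, and on the other hand the standard characterization of a monotone differentiable function of one variable by the sign of its derivative.
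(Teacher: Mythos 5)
Your proof is correct. Note, however, that this paper does not actually prove Proposition~\ref{tangent}: the text immediately preceding it states that the equivalence is proven in the companion paper \cite{BS5}, so there is no in-paper argument to compare yours against; your proposal supplies a self-contained proof where the paper only gives a citation. Your route is the natural one and is complete: writing $T_{\tau_2\tau_1}=\Lambda_{\tau_1}\Lambda_{\tau_2}^{-1}$ (legitimate since the period lattices have full rank), converting $\|T_{\tau_2\tau_1}\|\le 1$ into $T_{\tau_2\tau_1}^tT_{\tau_2\tau_1}\preceq I$ and then, by congruence with the invertible matrix $\Lambda_{\tau_2}$ (which preserves the Loewner order), into the monotonicity statement $\omega(\tau_1)\preceq\omega(\tau_2)$ for $\tau_1<\tau_2$; and finally passing between this global order condition and the pointwise condition $\frac{d\omega}{d\tau}(\tau)\succeq 0$ via the scalar functions $f_v(\tau)=v^t\omega(\tau)v$, using differentiability of the path (which is part of Definition~\ref{auxetic}) and the mean value theorem in one direction. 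The only point worth making explicit is that the tangent criterion in the statement is independent of the choice of generators, which follows from your congruence observation since a change of generators replaces $\omega(\tau)$ by $A^t\omega(\tau)A$ for a fixed unimodular integer matrix $A$; this matches the remark the paper makes right after the proposition.
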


\medskip \noindent
{\bf Remarks.}\ This infinitesimal characterization of auxetic paths is easily seen to be
independent of the choice of generators used for obtaining the Gram matrices. 
There is an obvious analogy here with {\em causal lines} in special relativity. Causal paths must have all their tangents in the {\em light-cone} of Minkowski space-time. Likewise, auxetic paths
must have all their tangents $ \frac{d\omega}{d\tau} (\tau)$ in the cone $\Omega(d)$ of positive semidefinite symmetric matrices. For the geometry and linear symmetries of this cone see \cite{Gr,B}. 

\medskip \noindent
This criterion is particularly convenient when frameworks and deformations are described 
as in \cite{BS4} using parameters in $(\R^d)^{n-1}\times \Omega(d)$. The coordinates in
$(\R^d)^{n-1}$ describe the position of $(n-1)$ chosen representatives for vertex orbits relative to the periodicity basis $\Lambda$ which can be retrieved, up to orthogonal transformations from its Gram matrix $\omega=\Lambda^t\Lambda\in \Omega(d)$. The orbit of a first vertex representative is identified with the periodicity lattice.

\begin{wrapfigure}{l}{0.5\textwidth}
\centering
 {\includegraphics[width=0.24\textwidth]{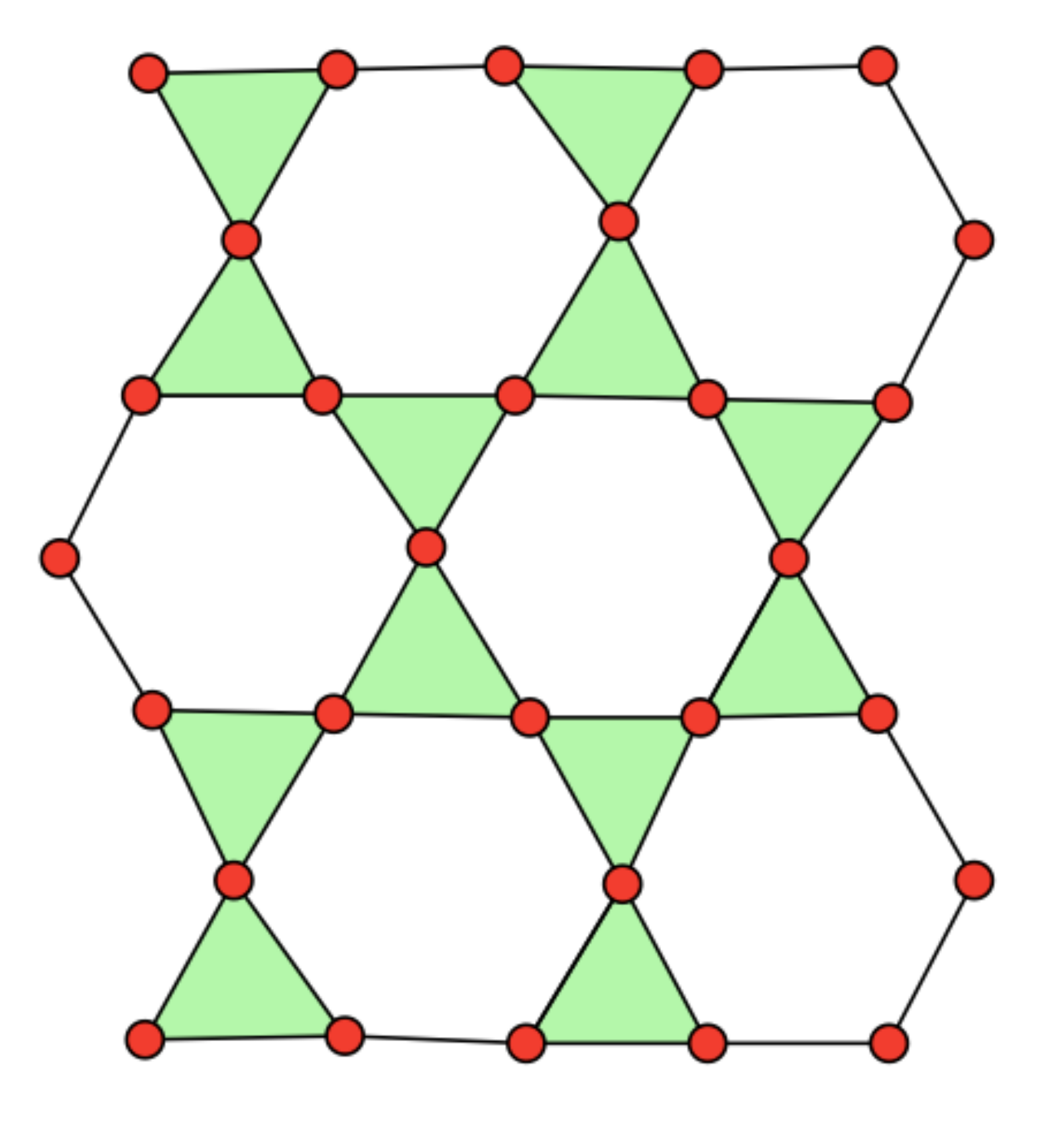}}
 {\includegraphics[width=0.24\textwidth]{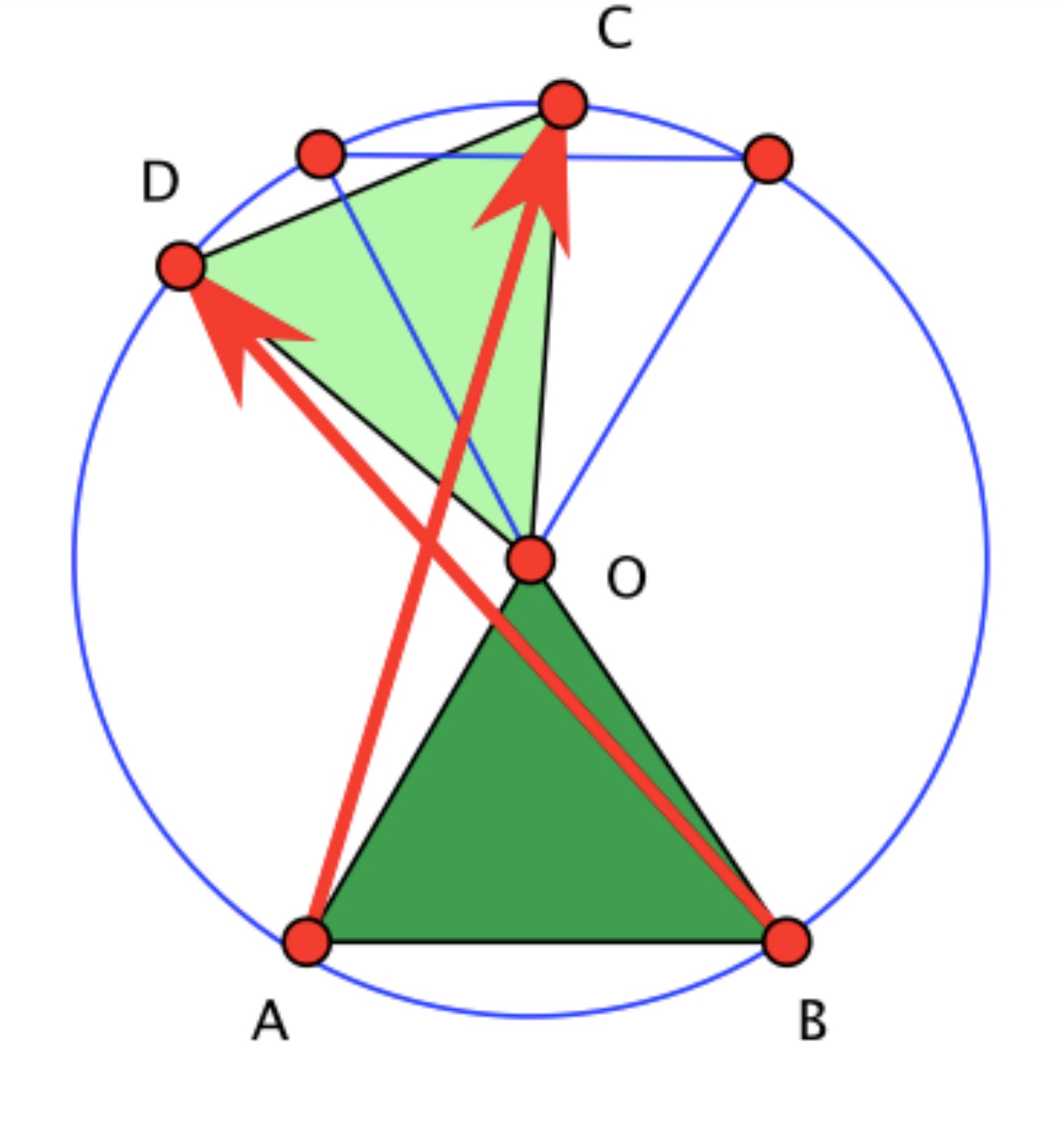}}
\caption{(Left) The Kagome framework. (Right) Parametrizing the deformation of the Kagome framework. Triangle OAB is fixed, and triangle OCD rotates with an angle $\theta$ from the standard position. The two generators of the periodicity lattice are marked as arrows.}
\vspace{-26pt}
\label{fig:kagome}
\end{wrapfigure}

\medskip \noindent
We now address the comparison of expansive and auxetic paths. 

\begin{theorem}\label{expaux}
Let $(G,\Gamma, p_{t},\pi_{t}), t\in (-\epsilon,\epsilon)$ be a one-parameter
deformation of a periodic framework in $\R^d$. If the path is expansive, that is, if the
 distance between any pair of vertices increases or stays the same for increasing $t$, then
the path is also auxetic. However, auxetic paths need not be expansive.
\end{theorem}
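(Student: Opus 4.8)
The plan is to prove the forward implication by noticing that the expansiveness hypothesis, even when restricted to pairs of vertices in a single $\Gamma$-orbit, already yields the Gram-matrix criterion of Proposition~\ref{tangent}. Fix a vertex $v_0$ and, for $\gamma\in\Gamma$, consider the pair $(v_0,\gamma v_0)$; since $\Gamma$ acts without fixed vertices these are genuinely distinct. By periodicity $p_t(\gamma v_0)-p_t(v_0)=\pi_t(\gamma)=\Lambda_t c_\gamma$, where $c_\gamma\in\mathbb{Z}^d$ is the integer coordinate vector of $\gamma$ with respect to the chosen generators of $\Gamma$. Hence the squared distance of this pair is precisely the Gram quadratic form,
$$ |p_t(\gamma v_0)-p_t(v_0)|^2 = c_\gamma^t\,\Lambda_t^t\Lambda_t\,c_\gamma = c_\gamma^t\,\omega(t)\,c_\gamma. $$
Differentiating, expansiveness for this pair reads $c_\gamma^t\,\dot\omega(t)\,c_\gamma\ge 0$. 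As $\gamma$ ranges over $\Gamma$ the vectors $c_\gamma$ exhaust $\mathbb{Z}^d$, so by homogeneity of the quadratic form and density of $\mathbb{Q}^d$ in $\R^d$ together with continuity, the inequality $c^t\dot\omega(t)c\ge 0$ holds for every $c\in\R^d$; that is, each tangent $\dot\omega(t)$ lies in the cone $\Omega(d)$, and Proposition~\ref{tangent} declares the path auxetic. Notice that only same-orbit pairs are used, so auxeticity in fact follows from a strictly weaker hypothesis than full expansiveness.

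For the converse I would exhibit an auxetic path that is not expansive, and the cleanest choice holds the periodicity lattice fixed. Then $\omega(t)$ is constant, $\dot\omega(t)=0\in\Omega(d)$, and the path is auxetic for the trivial reason that its Gram curve is stationary. It remains to arrange a genuine $\Gamma$-periodic, edge-length-preserving flex of the internal vertices along which two vertices in \emph{distinct} orbits approach one another. I would take a rigidly triangulated vertex orbit $A$ (which fixes the lattice) together with a second orbit $B$ of pendant vertices, each joined to $A$ by a single bar; the synchronized swinging of the $B$-orbit is a one-parameter periodic deformation during which a $B$-vertex moves toward a neighboring, non-adjacent $A$-vertex, so this inter-orbit distance decreases and the path is not expansive.

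The forward direction reduces to the elementary identity above and presents no real obstacle beyond the routine density argument. The main point requiring care is the converse: one must check that the proposed counterexample is a bona fide periodic framework and deformation, namely that the infinite graph is connected and that a nontrivial length-preserving periodic flex genuinely exists with the lattice held fixed. With $A$ triangulated and the $B$-vertices attached as pendants, connectedness and the single swinging degree of freedom are immediate, and since the lattice never moves the auxetic property holds exactly; thus exhibiting one decreasing inter-orbit distance suffices, and no further estimate is needed.
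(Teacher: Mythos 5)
Your proposal is correct, and both halves take routes that differ from the paper's in instructive ways. For the forward implication, the paper uses exactly the two ingredients you isolate --- expansiveness restricted to a single vertex orbit, and density of rational directions --- but it verifies Definition~\ref{auxetic} directly rather than going through Proposition~\ref{tangent}: for $\tau_1<\tau_2$ the operator $T_{\tau_2\tau_1}$ maps each period of $\Lambda_{\tau_2}$ to the corresponding period of $\Lambda_{\tau_1}$, which by expansiveness (applied to pairs within one vertex orbit) has no greater length, so a dense subset of the unit ball is mapped into the unit ball and $\|T_{\tau_2\tau_1}\|\leq 1$. That finite-difference argument never differentiates $\omega(t)$ and does not lean on Proposition~\ref{tangent}, whose proof lives in the companion paper \cite{BS5}; your infinitesimal version (differentiating the monotone functions $c_\gamma^t\,\omega(t)\,c_\gamma$ and concluding $\dot\omega(t)\in\Omega(d)$) is equivalent under the differentiability hypothesis built into Definition~\ref{auxetic}, just slightly less self-contained. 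Your closing observation that one orbit suffices is stated explicitly at the start of the paper's proof. On the second assertion the two arguments genuinely diverge: the paper exhibits the Kagome framework, whose deformation is auxetic as $\theta$ runs from $\pi$ down to $0$ yet expansive only on the pseudo-triangulation range $(\pi/3,2\pi/3)$, whereas you build a fixed-lattice example in which $\dot\omega\equiv 0$ makes auxeticity trivial while a swinging pendant orbit destroys expansiveness. Your counterexample is more elementary, requires no computation, and crystallizes precisely the phenomenon the paper only remarks on after its proof --- that auxeticity constrains the lattice evolution alone, not the relative motion of distinct vertex orbits. Two minor tightenings: rigidity of the triangulated orbit $A$ is not actually needed, since you prescribe the deformation path explicitly with $\pi_t$ constant; and Theorem~\ref{expaux} imposes no non-crossing hypothesis, so the swinging pendant may sweep across edges harmlessly.
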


\begin{proof} As emphasized earlier, the auxetic property depends only on the curve $\omega(\tau)$ and it will be enough to use the expansive property on one orbit of vertices.
We have to verify that the operator $T_{\tau_2\tau_1}$ which takes the period lattice basis
$\Lambda_{\tau_2}$ to the period lattice basis $\Lambda_{\tau_1}$ is a contraction for $\tau_2 > \tau_1$ 

\medskip \noindent
Let us observe that in the unit ball of $\R^d$, the vectors with rational coordinates relative to the basis $\Lambda_{\tau_2}$ give a dense subset. Since some integer multiple of such a point
is a period at moment $\tau_2$, and this period, as a distance between two vertices in a vertex orbit, can only decrease or preserve its norm when mapped by $T_{\tau_2\tau_1}$ to the corresponding period at moment $\tau_1$, we see that a dense subset of points in the unit ball must be mapped to the unit ball. This is enough to conclude that $||T||\leq 1$.
\end{proof}

\medskip \noindent
The fact that small auxetic deformations need not be expansive is to be expected from the fact that the relative motion of different vertex orbits is not sufficiently constrained by the auxetic property. A simple example is offered by the Kagome framework in dimension two. The Kagome framework is a familiar planar example and has been explored from various points of view \cite{GS,GH,HF,DT,SSML}.  The auxetic character of its one-parameter deformation is frequently mentioned \cite{GAE,M,MRMST}. The brief review  here is meant to offer a simple illustration of our geometric criterion for auxetic paths and to distinguish the expansive portions of this deformation.

\medskip \noindent
\begin{examp}{\bf (Deforming the Kagome framework)}\ 	The basic elements for describing the framework are shown in Figure~\ref{fig:kagome}. The parametrization for the deformation is described in Figure~\ref{fig:kagome}. Triangles $OAB$ and $OCD$ are assumed congruent and equilateral. With origin at $O$, coordinates may be chosen so that $A=(-1,0)$ and $B=-1/2(1,\sqrt 3)$. 
The resulting Gram matrix for the marked periods and a rotation of triangle OCD with angle $\theta$ from the standard position is:

\begin{equation}\label{Gram}
\omega(\theta)=(1+\cos \theta) \left( \begin{array}{cc} 2 & 1 \\
                                                                                        1 & 2   
\end{array}    \right)
\end{equation}

\noindent
with:

\begin{equation}\label{dGram}
\frac{d\omega}{d \theta} (\theta)=-\sin \theta \left( \begin{array}{cc} 2 & 1 \\
                                                                                        1 & 2   
\end{array}    \right)
\end{equation}

\end{examp}

\medskip \noindent
The image of the deformation path in the positive semidefinite cone $\Omega(2)$ for
$\theta \in (-\pi,\pi)$ is the segment from the null matrix to $\omega(0)$, covered twice.
The standard position $\theta=0$ is a maximum for the area of a fundamental parallelogram.
Proposition~\ref{tangent} shows immediately that the symmetric paths obtained for $\theta$
running from $\pi$ to $0$, respectively $-\pi$ to $0$, are auxetic.

\medskip \noindent
The range of $\theta$ corresponding to periodic pseudo-triangulations is the union
$ (-2\pi/3,-\pi/3)\cup (\pi/3,2\pi/3)$. Expansive behavior cannot occur beyond this range, 
since the distance variation between the two pairs of vertices $(A,D)$ and $(B,C)$ has opposite
character when $\theta$ is in the complement: when one segment increases, the other one decreases.

\medskip
\noindent
The framework shown in Fig.~\ref{fig:auxetic2D} is one of the emblematic illustrations of auxetic behavior in dimension two. It is called a {\em reentrant honeycomb} in \cite{PL}. It is often used to illustrate auxetic behavior: a vertical stretch involves a necessary horizontal expansion. 

\medskip \noindent
\begin{examp}{\bf (The ``reentrant honeycomb")}\ 
The framework in Fig.~\ref{fig:auxetic2D} has two degrees of freedom and not all deformations paths are auxetic. The expansive possibilities can be explained in terms of the two possible refinements to periodic pseudo-triangulations shown in Fig.~\ref{fig:pptReentrant}.
\end{examp}

\begin{wrapfigure}{r}{0.54\textwidth}
\vspace{-24pt}
\centering
 {\includegraphics[width=0.26\textwidth]{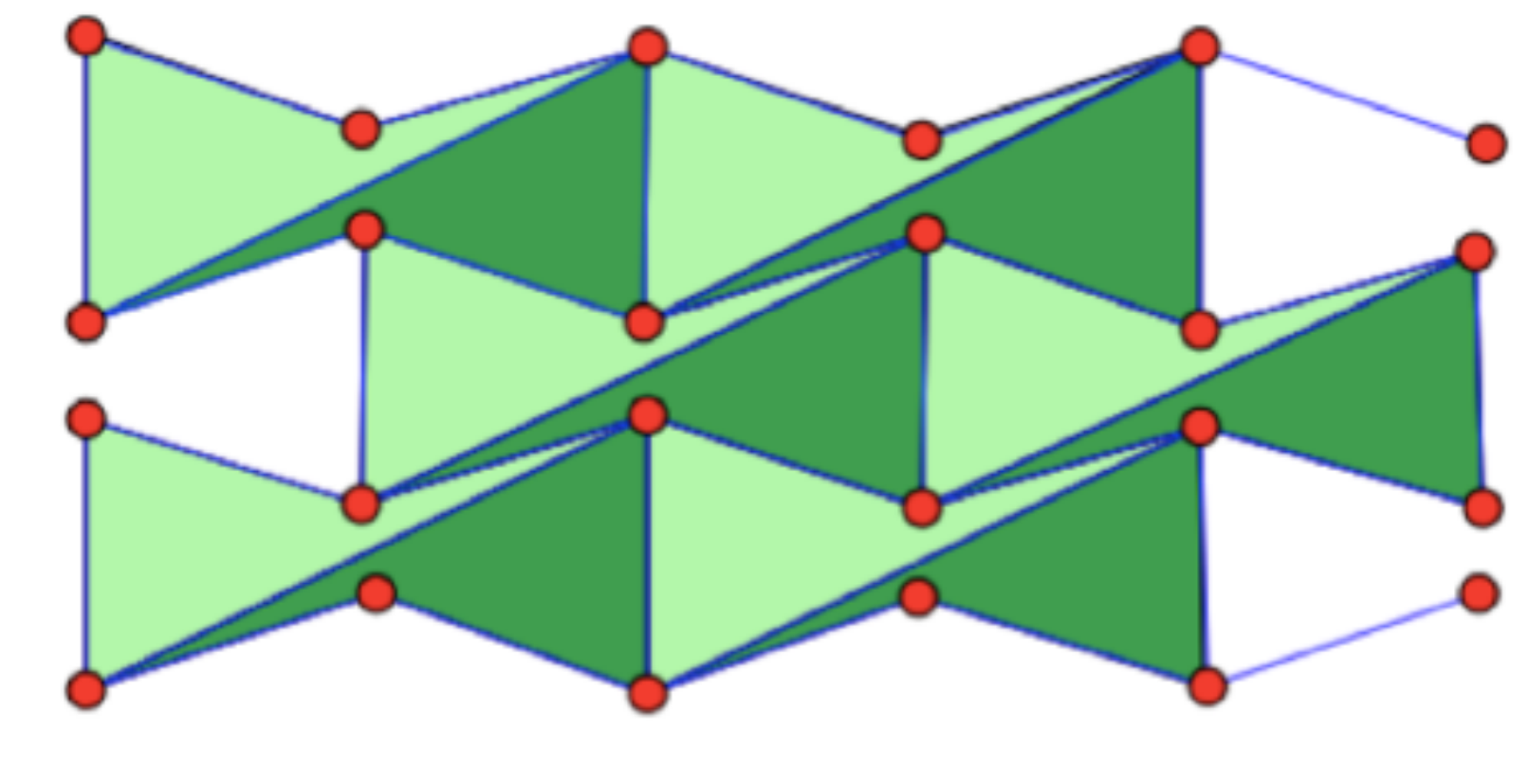}}
 {\includegraphics[width=0.26\textwidth]{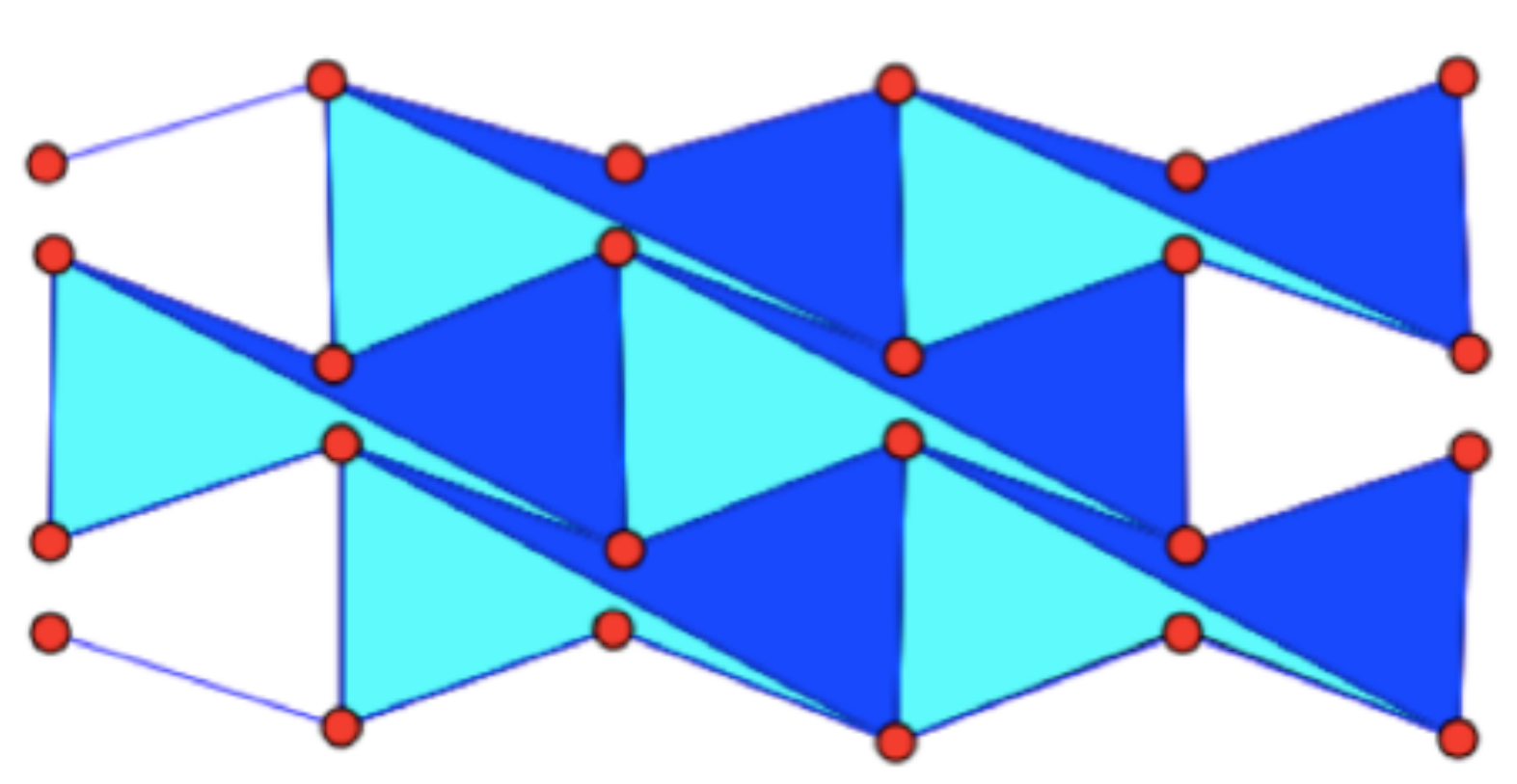}}
 \vspace{-14pt}
 \caption{\small{Two possible refinements to periodic pseudo-triangulations of the `reentrant' structure of hexagons in Figure~\ref{fig:auxetic2D}.}}
\vspace{-14pt}
 \label{fig:pptReentrant}
\end{wrapfigure}

\medskip
\noindent
The framework  with all edges of the same length may be obtained by deforming the 2-diamond framework (the regular hexagonal structure) \cite{BS6}. An auxetic path would result from a `vertical stretch' which visibly entails a horizontal expansion as well. However, an understanding of all expansive infinitesimal deformations within all auxetic possibilities requires some elaboration \cite{BS7}. Here, we limit our discussion to the following remarks on the basic role of periodic pseudo-triangulations.

\begin{wrapfigure}{l}{0.48\textwidth}
\vspace{-12pt}
\centering
 {\includegraphics[width=0.47\textwidth]{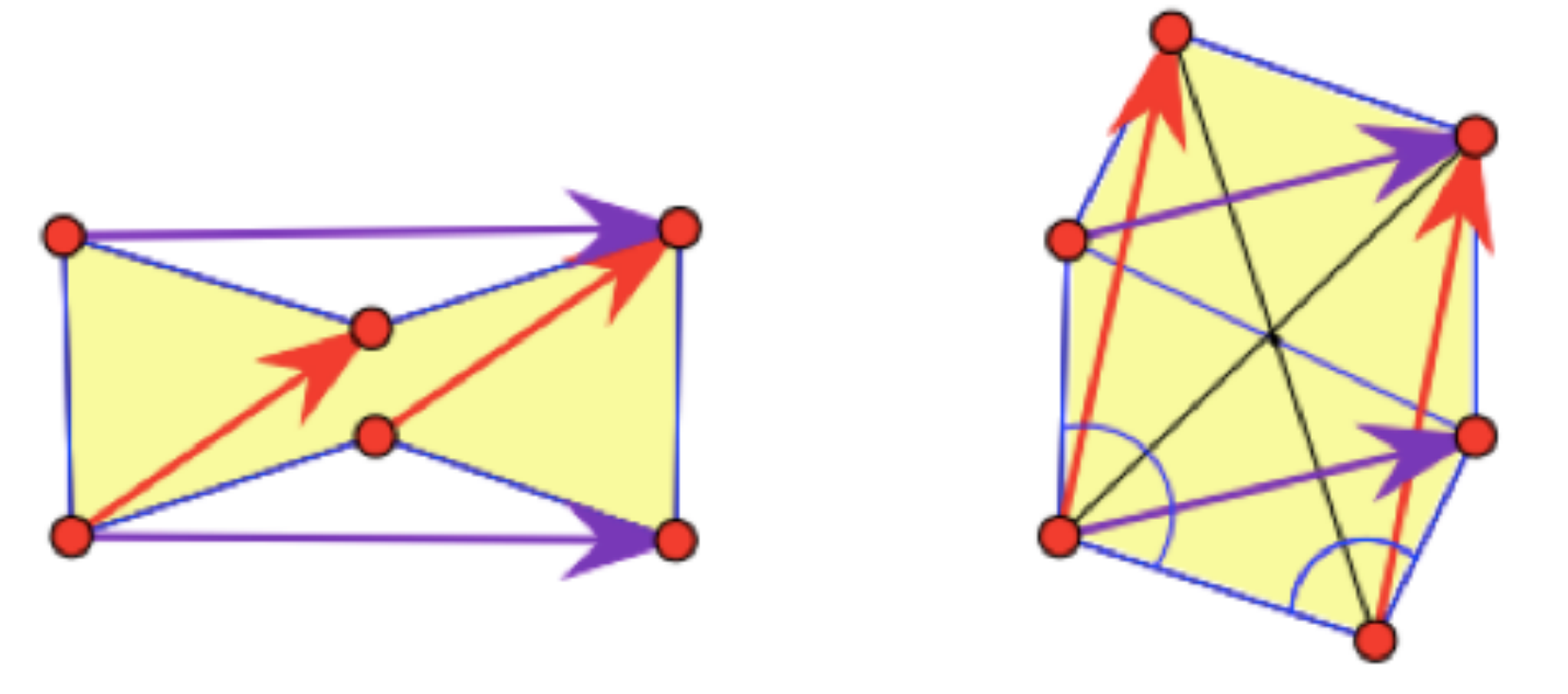}}
 \vspace{-12pt}
 \caption{\small{For the `reentrant' structure of hexagons in Figure~\ref{fig:auxetic2D}, which has two degrees-of-freedom, the figure on the right  marks two angles which may be used as parameters. By maintaining the central symmetry of the hexagon, the two depicted pairs of periods remain equal vectors.}}
\vspace{-10pt}
\label{fig:parametrizationReentrant}
\end{wrapfigure}

\medskip \noindent
The local deformation space of the framework is smooth and two-dimensional. Indeed, by pointedness at every vertex, there can be no periodic stress. Since $n=2$ and $m=3$, there are two degrees of freedom.  Figure~\ref{fig:pptReentrant} marks the two angles which may be used as parameters. By maintaining the central symmetry of the hexagon, the two depicted pairs of periods remain equal vectors. The framework can be refined in two ways to a periodic pseudo-triangulation by insertion of an additional orbit of edges. It can be shown that the expansive infinitesimal deformations determined by these two pseudo-triangulations give the extremal rays of the infinitesimal expansive cone of the framework. For the larger auxetic cone, we refer to \cite{BS5}.

\section{Ultrarigidity}
\label{sec:ultrarigidity}

In this final application we discuss the relevance for ultrarigidity of the other remarkable 
feature of periodic pseudo-triangulations, namely the invariance of their local deformation 
space under arbitrary relaxations of the periodicity group to subgroups of finite index $\tilde{\Gamma}\subset \Gamma$.

\medskip \noindent
The deformation theory of periodic frameworks, as founded and developed in \cite{BS2,BS3,BS4},
is formulated for periodic graphs understood as pairs $(G,\Gamma)$: the periodicity group
$\Gamma$ is an essential structural part of the data. The study of phenomena involving changes
in periodicity or asymptotic behavior under successive relaxations of periodicity may be
intricate. We have recently proposed \cite{borcea:pharmacosiderite:Kavli:arxiv:2012} the notion of {\em ultrarigidity} for expressing the property of a periodic framework $(G.\Gamma,p,\pi)$ of being infinitesimally rigid and remaining so
under any relaxation of periodicity to $\tilde{\Gamma}\subset \Gamma$ of finite index.

\begin{wrapfigure}{r}{0.34\textwidth}
\vspace{-16pt}
\centering
 {\includegraphics[width=0.32\textwidth]{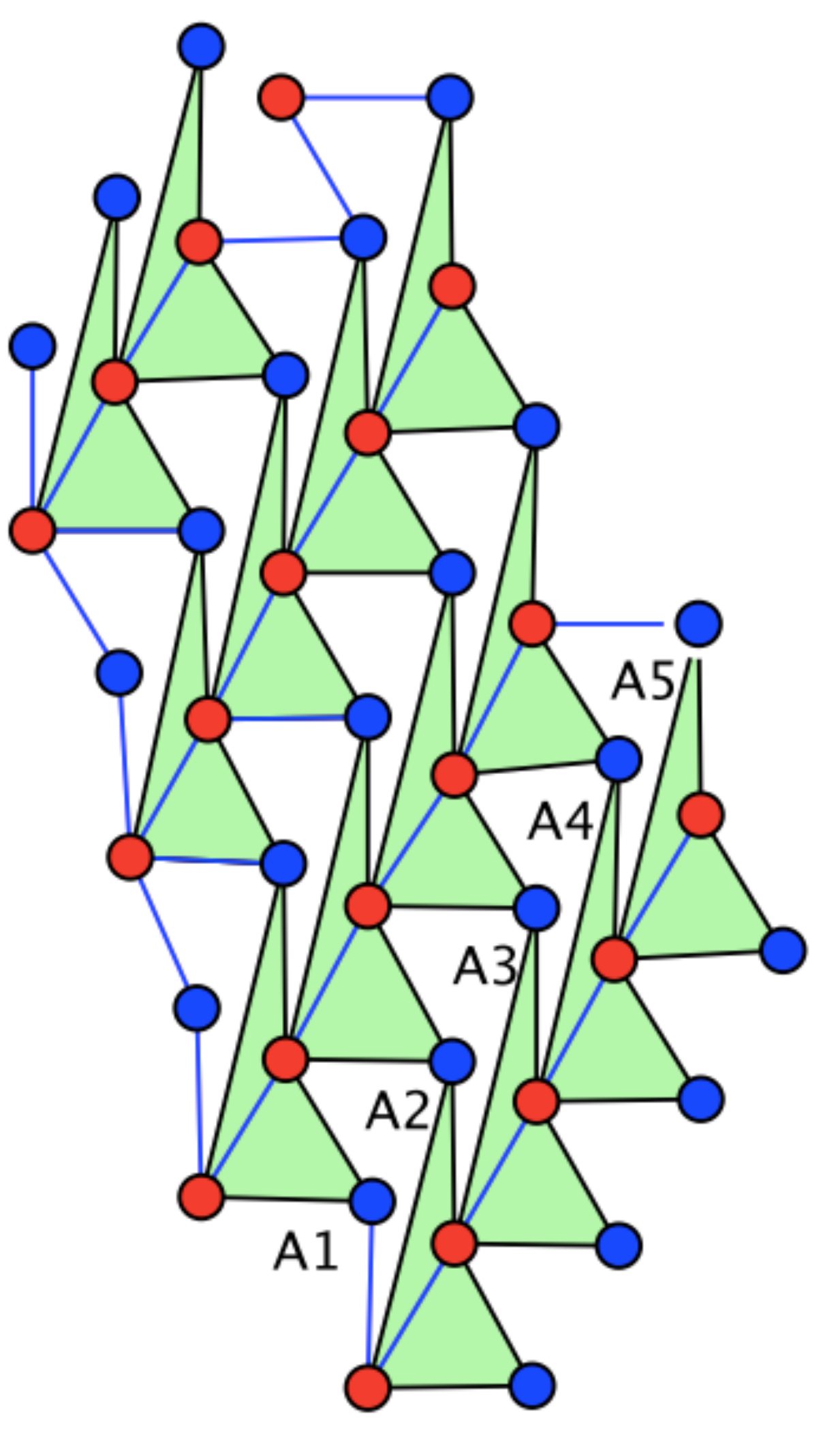}}
\vspace{4pt}
 \caption{ An ultrarigid framework obtained from a periodic pseudo-triangulation with an additional edge orbit. When ignoring periodicity, finite fragments are flexible. 
 }
\vspace{-20pt}
 \label{FigUltrarigid}
\end{wrapfigure}

\medskip \noindent
Ultrarigidity is easily recognized in frameworks made of some rigid finite parts which are
rigidly connected between themselves. However, this is not the general case, as the following
constructions, based on planar periodic pseudo-triangulations, will demonstrate.

\medskip \noindent
Let $(G,\Gamma,p,\pi)$ be a periodic pseudo-triangulation. Then, the local deformation space
is a smooth curve. The infinitesimal deformation corresponding to this one degree of freedom mechanism must induce on some pairs of vertices a non-trivial infinitesimal variation of length.
Then, as argued above in Lemma~\ref{edge1}, by selecting such a pair of vertices and by
inserting the corresponding orbit of edges, we obtain a {\em minimally rigid framework}, that is,
an infinitesimally rigid framework with $m=2n+1$. 

\medskip \noindent
In fact, the resulting framework is {\em ultrarigid}. Indeed, for any relaxation of periodicity
to a subgroup $\tilde{\Gamma}\in \Gamma$ of finite index, the older framework with relaxed
periodicity remains a pseudo-triangulation and has the same local deformation space.
Thus, the  distance between the selected pair of vertices varies infinitesimally and the same
argument applies, showing that insertion of the corresponding $\tilde{\Gamma}$ edge-orbit
already yields an infinitesimally rigid framework. This concludes the proof of the following
result.

\begin{prop}\label{ultrarigid}
Let $(G,\Gamma,p,\pi)$ be a periodic pseudo-triangulation in which we consider a pair of vertices
with a non-trivial infinitesimal variation in distance under the infinitesimal deformation of the
framework. Then the insertion of the corresponding $\Gamma$ orbit of edges results in an
ultrarigid framework.
\end{prop}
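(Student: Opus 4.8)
The plan is to verify the definition of ultrarigidity directly: I must show that the augmented framework is infinitesimally periodically rigid not only for the periodicity group $\Gamma$, but for \emph{every} subgroup $\tilde{\Gamma}\subset\Gamma$ of finite index. Two engines are already available. Lemma~\ref{edge1} provides the single-insertion mechanism: adjoining one orbit of edges on a pair of vertices whose distance varies infinitesimally converts a periodic pseudo-triangulation into a minimally rigid, hence infinitesimally rigid, periodic framework. Proposition~\ref{NOstress} supplies the crucial persistence under relaxation: a periodic pseudo-triangulation remains a pseudo-triangulation with the same smooth one-dimensional deformation space, and with no nontrivial periodic stress, after any relaxation of periodicity of finite index.

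First I would fix an arbitrary finite-index subgroup $\tilde{\Gamma}\subset\Gamma$ and examine the \emph{unaugmented} pseudo-triangulation under the relaxed periodicity. By Proposition~\ref{NOstress} this framework is still a pseudo-triangulation carrying a smooth one-parameter deformation and no nontrivial periodic stress; crucially, this is the \emph{same} infinitesimal deformation as before, merely viewed with coarser periodicity, so the chosen pair of vertices still undergoes a nontrivial infinitesimal variation of distance along it. Applying Lemma~\ref{edge1} in the $\tilde{\Gamma}$-periodic setting then shows that inserting a single $\tilde{\Gamma}$-orbit of the corresponding edge already removes the one degree of freedom and yields an infinitesimally rigid framework (the pseudo-triangulation count $m=2n$ is preserved under relaxation, both orbit counts scaling by the index $\rho$, so one edge-orbit gives the minimal-rigidity count $2n+1$).

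Next I would relate the inserted $\Gamma$-orbit to the $\tilde{\Gamma}$-orbits. Under the relaxation the $\Gamma$-orbit of the selected edge decomposes into $\rho=[\Gamma:\tilde{\Gamma}]$ distinct $\tilde{\Gamma}$-orbits, each a $\Gamma$-translate of the original edge and hence carrying the same nontrivial length variation by periodicity. Since inserting even one of these orbits already produces infinitesimal rigidity, and since adjoining the remaining edge-orbits can only enlarge the image of the rigidity matrix and thus preserve infinitesimal rigidity, the framework obtained by inserting the \emph{entire} $\Gamma$-orbit is infinitesimally rigid as a $\tilde{\Gamma}$-periodic framework. As $\tilde{\Gamma}$ was an arbitrary finite-index subgroup, this establishes ultrarigidity.

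The main point requiring care is the bookkeeping of the relaxation: one must confirm that the persistence of the one-dimensional deformation space furnished by Proposition~\ref{NOstress} genuinely transfers the nontrivial distance variation of the chosen pair to every relaxation, and that the monotonicity of infinitesimal rigidity under edge-orbit insertion is invoked correctly in the periodic setting rather than only for finite subfragments. A secondary technical point arises if the chosen edge crosses other edges: then the stress-via-lifting step underlying Lemma~\ref{edge1} is not directly available, and I would appeal to Bow's method, exactly as in the proof of Theorem~\ref{thm:expansive}, to reduce to a non-crossing configuration with identical stress signs before invoking the lifting argument.
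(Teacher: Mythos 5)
Your proposal is correct and takes essentially the same route as the paper's proof: persistence of the pseudo-triangulation structure and its deformation space under any finite-index relaxation (Proposition~\ref{NOstress}), combined with the single-insertion rigidity argument of Lemma~\ref{edge1} applied at each relaxation level. Your additional bookkeeping --- the decomposition of the $\Gamma$-orbit into $\rho$ distinct $\tilde{\Gamma}$-orbits, the monotonicity of infinitesimal rigidity under further edge insertion, and the Bow's-method fallback for a crossing edge --- only makes explicit what the paper compresses into the remark that a single $\tilde{\Gamma}$ edge-orbit ``already yields'' an infinitesimally rigid framework.
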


\medskip \noindent
Thus, periodic pseudo-triangulations and insertion choices provide endless examples of ultrarigid frameworks.

\medskip \noindent
\begin{examp}{\bf (An ultrarigid periodic framework)}\ The framework illustrated in Figure~\ref{FigUltrarigid} is ultrarigid. The
colors of the vertex orbits indicate the periodicity lattice of the pseudo-triangulation used
in this construction. The pseudo-triangulation itself is a deformed 2-diamond framework with an additional edge orbit \cite{BS6}. The edge orbit which turns the periodic pseudo-triangulation into an ultrarigid framework creates the rigid quadrilaterals
shown in the picture. 

\medskip
\noindent
It is worth remarking that a finite {\em fragment} of the framework may be covered by a fragment resembling the one depicted
in the figure. Such fragments, as {\em finite linkages}, are flexible and  can accommodate  small variations of the segments $A_1A_2$, $A_2,A_3$ etc. However, larger assemblies of these stacked rows  of rigid quadrilaterals will have smaller leeway for variation of deformation parameters. In the limit, as an infinite periodic framework, the structure is rigid.
\end{examp}

\medskip
\noindent
{\bf In conclusion,} we anticipate that our periodic version of Maxwell's Theorem and the expansive nature of periodic pseudo-triangulations will find, like their finite counterparts, further applications in discrete and computational geometry. In the larger scientific context, applications are expected in new materials and mechanism design.

\medskip
\noindent
An extended abstract of this work has appeared in \cite{BS8}.

\vspace{0.5in}

C. Borcea 

\noindent
              Department of Mathematics, Rider University, Lawrenceville, NJ 08648, USA 
              
\medskip
 I. Streinu 

\noindent
              Department of Computer Science, Smith College, Northampton, MA 01063, USA

\end{document}